\documentclass[a4paper,11pt,reqno]{amsart}
\usepackage{enumitem}
\usepackage{amssymb, amsmath}
\usepackage[margin=2cm]{geometry}
\usepackage{mathrsfs}%,dsfont}
\usepackage{amscd} \usepackage[active]{srcltx} \usepackage{verbatim}
\usepackage[colorlinks,linkcolor={blue},citecolor={blue},urlcolor={black}]{hyperref}
\usepackage[utf8]{inputenc}
\usepackage{wrapfig}

 \usepackage{tikz}
\usetikzlibrary{calc,positioning,decorations.pathreplacing}
\usetikzlibrary{backgrounds}
\usetikzlibrary{decorations.pathmorphing}
\usetikzlibrary{arrows}
\usetikzlibrary{shapes.geometric}
\usetikzlibrary{calc}
\usetikzlibrary {arrows.meta,bending,positioning}

\theoremstyle{plain}
\newtheorem{theorem}{Theorem}
\newtheorem{lemma}{Lemma}[section]

\newtheorem{proposition}[lemma]{Proposition}

\newtheorem{assumption}[lemma]{Assumption}

\newtheorem*{definition*}{Definition}

\theoremstyle{remark}
\newtheorem{remark}[lemma]{Remark}

\newtheorem*{claim*}{Claim}
\newtheorem*{remark*}{Remark}
\newtheorem*{example*}{Example}
\newtheorem*{notation*}{Notation}
\newcommand{\step}[1]{\paragraph{\textbf{Step #1}}}

\numberwithin{equation}{section}

\def\N{{\mathbb N}}

\renewcommand{\d}{\mathfrak{d}}

\renewcommand{\phi}{\varphi}

\newcommand{\TTd}{{\mathbb{T}^d}}

\newcommand{\e}{\epsilon}
\newcommand{\TND}{\mathbb{T}^d_N}
\newcommand{\PP}{\mathbb{P}}
\newcommand{\cH}{\mathcal{H}}

\newcommand{\cD}{\mathcal{D}}

\newcommand{\cL}{\mathcal{L}}

\newcommand{\RRd}{\mathbb{R}^d}

\newcommand{\RR}{\mathbb{R}}

\newcommand{\cF}{\mathcal{F}}

\newcommand{\cP}{\mathcal{P}}

\newcommand{\hs}{\ensuremath{\hspace{1cm}}}
\newcommand{\h}{\ensuremath{\hspace{0.1cm}}}
\newcommand{\EE}{\ensuremath{\mathbb{E}}}

\newcommand{\Q}{\ensuremath{\mathcal{Q}}}

\newcommand{\tf}{{t_{\mathrm{fin}}}}

\usepackage{fancyhdr}
\makeatletter

\title{The Porous Medium Equation: Multiscale Integrability in Large Deviations}
\author{Benjamin Gess}
\author{Daniel Heydecker}
\address{B. Gess: Institut f\"ur Mathematik
Technische Universit\"at Berlin
Stra{\ss}e des 17. Juni 136
10623 Berlin}
\email{benjamin.gess@tu-berlin.de}
\address{D. Heydecker: University of Oslo, Blindern, 0316 Oslo}
\email{daniehey@math.uio.no}

\subjclass[2010]{60F10 (primary), 82B21 (secondary), 60K35 82B31.}

\keywords{Zero-range process, large deviations, multiscale method.}

\makeatother
\begin{document}

\begin{abstract}
We consider a zero-range process $\eta^N_t(x)$ with superlinear local jump rate, which in a hydrodynamic-small particle rescaling converges to the porous medium equation $\partial_t u=\frac12\Delta u^\alpha, \alpha>1$. As a main result we obtain a large deviation principle in any scaling regime of vanishing particle size $\chi_N\to 0$. The key challenge is to develop uniform integrability estimate on the nonlinearity $(\eta^N(x))^\alpha$ in a situation where neither pathwise regularity nor Dirichlet-form based regularity is readily available. We resolve this by introducing a novel multiscale argument exploiting the appearance of {\em pathwise regularity across scales}.   \end{abstract}

\maketitle

\section{Introduction} \label{sec: intro} The problem of finding the large deviations of gradient particle systems is a classical problem \cite{kipnis1989hydrodynamics,kipnis1990large,kipnis1998scaling}, which has now been resolved for a number of models with bounded and nondegenerate diffusivity. Once one allows unbounded and degenerate diffusivity, many open questions in large deviations remain. \\ \\ The goal of this paper will be to introduce a general and robust analysis to understand where these features cause difficulties, and how they may be resolved, for a simple and paradigmatic example with both unbounded and degenerate diffusion. We will study a zero-range process $\eta^N_\bullet$ which converges, with a further rescaling introduced in \cite{gess2023rescaled}, to the porous medium equation (PME) \begin{equation}
				\label{eq: PME} \partial_t u_t=\frac12\Delta(u_t^\alpha),\qquad x\in \TTd, t\in [0,\tf]; \qquad \alpha>1.
			\end{equation} The appearance of the degenerate diffusion at the level set $u=0$ is made possible by performing a hydrodynamic rescaling of the zero-range process with local jump rate $g(k)=dk^\alpha$ on the discrete torus, so that the rescaled spatial variable takes values in $\TND:=\{0, \frac{1}{N}, \dots, \frac{N-1}{N}\}^d$, and performing an additional rescaling in both time and particle size by a factor $\chi_N \to 0$. A full definition of the rescaling will be given in Section \ref{sec: prelim}. \\\\ Previous literature \cite{gess2023rescaled,fehrman2019large} crucially  relied on a particular relative scaling regime of the parameters $N, \chi_N$, requiring particle size $\chi_N$ to decay sufficiently quickly: As will be discussed below, the properties of the particle system in the opposite scaling regime of large particles are fundamentally different, and established techniques are no longer available. The contribution of this work is to resolve the challenges from degenerate and unbounded diffusivity {\em without any such scaling restriction}, thereby extending the large deviation principle found in \cite[Theorem 2]{gess2023rescaled} to the whole scaling regime. \begin{theorem}\label{thrm: LDP} Let $\chi_N \to 0$. Fix $\rho\in C(\mathbb{T}^d, (0,\infty))$, and let $(\Omega, \cF,\PP)$ be a probability space on which are defined, for all $N$, a rescaled zero-range process $\eta^N_\bullet$ with parameter $\alpha\ge 1$ and initial data distributed according to the slowly varying local equilibrium distribution $\rm{Law}_{\PP}[\eta^N_0]= \Pi^N_\rho$ given by \eqref{eq: svleq}. Then $\eta^N_\bullet$ satisfy a large deviations principle in the Skorokhod space\footnote{See Section \ref{sec: prelim}.} $\mathbb{D}$ with speed $N^d/\chi_N$ and the same rate function $\mathcal{I}_\rho$ as defined in \cite{gess2023rescaled}. 	\end{theorem}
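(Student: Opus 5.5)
The plan is to reuse the architecture of \cite{gess2023rescaled}, where the large deviation principle was proved under a restrictive relation between $N$ and $\chi_N$. I will identify the single place where that restriction enters and replace it with a new estimate valid for every $\chi_N\to 0$.

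The proof in \cite{gess2023rescaled} has four ingredients:
\begin{enumerate}
\item exponential tightness of $\eta^N_\bullet$ in $\mathbb{D}$ at speed $N^d/\chi_N$;
\item the upper bound on compact sets, via exponential martingales and a replacement lemma that replaces $(\eta^N(x))^\alpha$ by a function of local averages;
\item the lower bound, obtained by tilting to the weakly asymmetric dynamics with controls $\nabla H$, together with a law of large numbers for the tilted process;
\item a density argument identifying the rate function $\mathcal{I}_\rho$ on the relevant class of paths.
\end{enumerate}
Steps (iii) and (iv) concern only the limiting PDE and the tilted hydrodynamics. I expect them to carry over once the same integrability is available for the tilted processes. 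The key input common to all the probabilistic steps is a superexponential uniform integrability estimate on the nonlinearity. Concretely, I would aim for
\[
\limsup_{M\to\infty}\limsup_{N\to\infty}\frac{\chi_N}{N^d}\log\PP\Big(\int_0^{\tf}\frac{1}{N^d}\sum_{x\in\TND}(\eta^N_t(x))^\alpha 1_{\{\eta^N_t(x)>M\}}\,dt>\delta\Big)=-\infty
\]
for every $\delta>0$, together with its analogue for $(\eta^N)^\alpha$ replaced by local spatial averages.

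In the small-particle regime, \cite{gess2023rescaled} derives this estimate from the Dirichlet form, using a Feynman--Kac / one-block bound whose error term is only small when $\chi_N$ decays fast enough relative to $N$. In the large-particle regime that bound fails. The substitute is a multiscale argument across mesoscopic scales $\epsilon_k = 2^{-k}$. At the coarsest scale, the energy and entropy give a bound on $\int_0^{\tf}\int (u^\epsilon)^{\alpha}$ for the smoothed density $u^\epsilon$. Pathwise regularity then comes from the martingale decomposition tested against the mollifier, which controls the difference between the averages at scales $\epsilon_k$ and $\epsilon_{k+1}$. The point is that the noise term at scale $\epsilon$ has exponential moments of order $N^d\epsilon^d/\chi_N$, which is enough for the speed $N^d/\chi_N$ as long as $\epsilon$ stays mesoscopic. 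Telescoping from scale $\epsilon_0$ down to the lattice scale $1/N$ yields the estimate on $(\eta^N(x))^\alpha$, provided the errors across scales are summable. I would arrange summability by exploiting $\alpha>1$, so that the increments decay geometrically in $k$, with a $\chi_N$-dependent cutoff at the final scales where single sites are handled by a crude exponential-moment bound under $\Pi^N_\rho$ propagated by entropy.

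I expect this multiscale integrability lemma to be the main obstacle, specifically closing the sum across scales uniformly in $N$ and $\chi_N$. The first thing I would try is an iterative scheme, in which the bound at scale $\epsilon_k$ feeds into the bound at scale $\epsilon_{k+1}$ via a discrete Gronwall-type inequality. Once the lemma is in hand, the replacement lemma, exponential tightness and both bounds should follow as in \cite{gess2023rescaled}, which gives the theorem with the same rate function $\mathcal{I}_\rho$.
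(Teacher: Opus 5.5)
\textbf{There is a genuine gap: your multiscale mechanism breaks down on the scales that define the new regime, and the replacement lemma does not carry over from \cite{gess2023rescaled}.}

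Your reduction is right in outline. The paper also feeds a superexponential replacement estimate (Theorem \ref{thrm: supex}), a uniform integrability estimate (Theorem \ref{thrm: main integrability estimate}) and exponential tightness into the classical machinery, and uses \cite{fehrman2019large} to match the upper and lower rate functions.

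\emph{The integrability mechanism.} Linear test functions against a mollifier only give time regularity of the averages. To get spatial regularity across scales you need a nonlinear functional such as the entropy. Its noise/It\^o contribution, when regularity at side length $l$ is measured across the whole torus, carries a factor $(N/l)^2$ (compare \eqref{eq: preintro mesoscopic est} with $\widetilde{l}_N=N$). At $l=1$ this is exactly the $\chi_N N^2$ variance blow-up of \cite{gess2023rescaled}. The pathwise argument therefore only reaches down to a scale $l_*$ with $l_*\to\infty$ whenever $\chi_N N^2\to\infty$, and the leftover range $1\le l\le l_*$ is not a harmless cutoff.

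Static single-site bounds cannot handle that range either. Under $\Pi^N_a$ one has $\mathbb{E}[\exp(\gamma\chi_N^{-1}(\eta^N(x))^\alpha)]=\infty$ for every $\gamma>0$ when $\alpha>1$, because single-site tails decay only like $\exp(-\alpha\chi_N^{-1}u\log u)$. Entropy controls only $u\log u$: placing extra mass $m$ at height $u=(M/m)^{1/(\alpha-1)}$ raises $\int u^\alpha$ by about $M$ at entropy cost of order $m\log(M/m)\to 0$. Uniform integrability of $(\eta^N)^\alpha$ at speed $N^d/\chi_N$ is a genuinely dynamical statement. Nothing in your proposal produces the geometric decay you attribute to $\alpha>1$.

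\emph{What the paper does instead.} It takes regularity from the Dirichlet form (via Feynman--Kac, restricted to translation-invariant $f_N$) and measures it only locally. Lemma \ref{lemma: reg by paths} combines canonical paths with integration by parts. For boxes of side $l$ joined by paths of length $\widetilde{l}$ it gives $\mathbb{E}_{f_N\Pi^N_a}[((\Lambda_B\eta^N)^{\alpha/2}-(\Lambda_{B'}\eta^N)^{\alpha/2})^2]\le\chi_N\widetilde{l}^2N^{-d}\mathfrak{d}_N(f_N)+Cl^{-d/2}\chi_N^\delta\,\mathbb{E}_{f_N\Pi^N_a}[1+\|(\eta^N)^\alpha\|_{\ell^1}]$. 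The $\alpha$-average $\Lambda_B$ is what lets the dissipation close as a square.

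Lemma \ref{lemma: 1sr} turns this, inside each box $\widetilde{B}$, into an integrability gain via a discrete Sobolev inequality, so the error carries only $(\widetilde{l}/l)^2$ rather than $(N/l)^2$. The boxes are patched together with an Orlicz norm whose dual $\Psi$ keeps $\chi_N^{\delta/2}\Psi^{-1}(N^d)$ bounded. Iterating over scales $1=l^1\ll\dots\ll l^{K_N}\sim N$, with $\chi_N^{\delta/2}(l^{k+1}/l^k)^2$ bounded and weights $\lambda_k$, reaches all the way down to $l=1$. Summability comes from the factors $l^{-d/2}$ at fine scales and $\widetilde{l}/N$ at coarse scales, not from $\alpha>1$. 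A mass-bound interpolation then closes the estimate.

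\emph{The replacement lemma.} Your claim that it ``follows as in \cite{gess2023rescaled}'' is also unjustified. There it came from the same entropy-dissipation supermartingale, which needs $\chi_N^\delta N^2\lesssim 1$. The classical one-block route is unavailable too, because the Dirichlet form sees the degenerate rate $\chi_N^{\alpha-1}\to 0$. The paper instead truncates using the integrability estimate. It then applies the single-site case of Lemma \ref{lemma: reg by paths} to sites at distance at most $\epsilon N$, which costs only $\chi_N\epsilon^2N^{2-d}\mathfrak{d}_N(f_N)$ plus an error of order $\chi_N^\delta$, small for every $\chi_N\to 0$.
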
  Independently of the scaling regime, one must contend with the same key challenges of proving a {\em superexponential estimate} and a {\em uniform integrability estimate}, as detailed in Section \ref{sec: theorem statements}. However, the properties of the process, and hence the possible techniques, are fundamentally different, depending on whether $\chi_N\to 0$ sufficiently fast, or sufficiently slowly, with $N$. Indeed, in \cite{gess2023rescaled}, the key results are derived from estimates using the supermartingale \begin{equation}
		\label{eq: superexp ent diss} \exp\left(\frac{N^d}{\chi_N}\left(\cH(\eta^N_t)-\cH(\eta^N_0)+\int_0^t \|\nabla_N(\eta^N_s)^{\alpha/2}\|_{L^2_x(\TND)}^2 ds - Ct\chi_N^\delta N^2 \right)\right) \end{equation} for some fixed $C<\infty, \delta>0$, and where $\cH$ is the Boltzmann entropy. The usefulness of such an estimate relies crucially on the scaling $\chi_N^\delta N^2\lesssim 1$, which characterises the regime in which the variance of each discrete gradient $\nabla_N(\eta^N_t)^{\alpha/2}(x)$ remains bounded: Outside this scaling regime, such an estimate is impossible due to variance blowup. The equivalent relative scaling is also used in \cite{fehrman2019large} for the Stratonovich-Dean-Kawasaki equation \begin{equation}
	\label{eq: SPDE} \partial_t u^\e_t = \frac12 \Delta \left((u^\epsilon_t)^\alpha\right)-\sqrt{\e} \nabla\cdot\left((u^\e_t)^{\alpha/2}\circ \xi^\delta\right)
\end{equation} driven by the convolution $\xi^\delta$ of a white noise on a on a scale $\delta=\delta(\epsilon) \to 0$, where the scaling $\epsilon \delta(\epsilon)^{-(d+2)} \to 0$ is required to produce a martingale like \eqref{eq: superexp ent diss} with the continuum entropy dissipation $\|\nabla (u^\epsilon)^{\alpha/2}\|_{L^2_x}.$  In both cases, the pathwise regularity relies on the smallness of the noise intensity $\chi_N, \epsilon$ with respect to the correlation length $N^{-1}, \delta$, and the key results may be derived from this regularity. \\\\ On the other hand, the stronger noise improves the regularity in probability space enjoyed by laws, as measured by the Dirichlet form  \begin{equation}
				\label{eq: dirichlet on laws} \d_N(f_N):=\int_{X_N}\frac12 \sum_{x\sim y}  \h \frac{(\eta^N(x))^\alpha}{\chi_N}\left(\sqrt{f_N(\eta^{N,x,y})}-\sqrt{f_N(\eta^N)}\right)^2 \Pi^N_a(d\eta^N)
			\end{equation} on probability densities $f_N$ with respect to the invariant measures $\Pi^N_a$, since the smallest nonzero jump rate $\sim \chi_N^{\alpha}$ is then less degenerate. Techniques based on estimating expectations from the Dirichlet form are used in \cite{kipnis1989hydrodynamics,kipnis1998scaling}, which thus benefit from the stronger noise, in the scaling regime (with the current notation) $\chi_N\sim 1$. In order to deal with any scaling regime, we are forced to contend with the problems of integrability, required to start a so-called superexponential estimate, and the superexponential estimate itself, while being able to access neither of these kinds of regularity in any direct way. \\ The resolution which we develop is to bridge these two kinds of regularity. Given a box $\widetilde{B}$ of side length $\widetilde{l}_N$, and a partition $\cP|_{\widetilde{B}}$ of $\widetilde{B}$ by boxes $B$ of side length $l_N\ll \widetilde{l}_N$, we show, in Lemma \ref{lemma: reg by paths}, that coarse-grained quantities $\Lambda_B \eta^N$ given by \eqref{eq: def Lambda} satisfy the regularity estimate, for translationally invariant distributions\footnote{See Lemma \ref{lemma: gallilean pdfs}.} $f_N$, \begin{equation}\begin{split}&\mathbb{E}_{f_N\Pi^N_a}\left[\left\|\nabla_{\widetilde{l}_N/l_N}(\Lambda_B \eta^N)^{\alpha/2}\right\|_{\ell^2(B\in \cP|_{\widetilde{B}})}^2\right] \\ &\hspace{4cm} \le C\widetilde{l}_N^2\chi_N N^{-d}\d_N(f_N)\\& \hspace{4cm}+C\chi_N^\delta l_N^{-d/2}\left(\frac{\widetilde{l}_N}{l_N}\right)^2\mathbb{E}_{f_N\Pi^N_a}\left(1+\|(\eta^N)^\alpha\|_{\ell^1(\TND)}\right). \end{split}\label{eq: preintro mesoscopic est} \end{equation}  \setlength{\columnsep}{1em}              % 

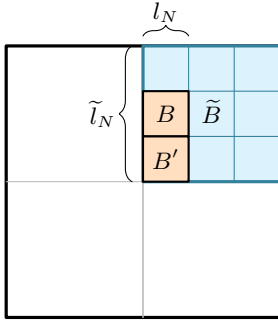
\begin{wrapfigure}[13]{l}{5cm}
  \vspace{-0.8cm}\centering
  \begin{tikzpicture}[scale=0.60, line join=round, line cap=round, font=\footnotesize]
    % --- Parameters (picture-only) ---
    \def\L{6}         % total side length in picture coordinates
    \def\N{2}         % coarse boxes per side (2x2)
    \pgfmathsetmacro\outer{\L/\N}

    % Outer boundary + coarse grid
    \draw[very thick] (0,0) rectangle (\L,\L);
    \draw[gray!55] (\outer,0) -- (\outer,\L);
    \draw[gray!55] (\outer,0) -- (\outer,\L);
    %\draw[gray!55] (2*\outer,0) -- (2*\outer,\L);
    \draw[gray!55] (0,\outer) -- (\L,\outer);
    %\draw[gray!55] (0,2*\outer) -- (\L,2*\outer);

    % Special top-right coarse box: \widetilde{B}
    \coordinate (tBL) at (\outer,\outer);
    \coordinate (tTR) at (\L,\L);

    % distinguish "tilde" objects by color
    \fill[cyan!12] (tBL) rectangle (tTR);
    \draw[very thick, cyan!55!black] (tBL) rectangle (tTR);

    % Subpartition of \widetilde{B} into 2x2 fine boxes
    \def\m{3}
    \pgfmathsetmacro\small{\outer/\m}
    \draw[cyan!55!black] ($(tBL)+(\small,0)$) -- ($(tBL)+(\small,\outer)$);
    \draw[cyan!55!black] ($(tBL)+(2*\small,0)$) -- ($(tBL)+(2*\small,\outer)$);
    \draw[cyan!55!black] ($(tBL)+(0,\small)$) -- ($(tBL)+(\outer,\small)$);
    \draw[cyan!55!black] ($(tBL)+(0,2*\small)$) -- ($(tBL)+(\outer,2*\small)$);

    % Highlight and label two fine boxes B and B'
    % B: top-left fine box in \widetilde{B}
    \coordinate (Bbl) at ($(tBL)+(0,\small)$);
    \coordinate (Btr) at ($(tBL)+(\small,2*\small)$);
    \fill[orange!25] (Bbl) rectangle (Btr);
    \draw[thick] (Bbl) rectangle (Btr);
    \node[fill=none, inner sep=1.2pt] at ($(Bbl)!0.5!(Btr)$) {$B$};

    % B': bottom-right fine box in \widetilde{B}
    \coordinate (Bpbl) at ($(tBL)+(0,0)$);
    \coordinate (Bptr) at ($(tBL)+(\small,\small)$);
    \fill[orange!25] (Bpbl) rectangle (Bptr);
    \draw[thick] (Bpbl) rectangle (Bptr);
    \node[fill=none, inner sep=1.2pt] at ($(Bpbl)!0.5!(Bptr)$) {$B'$};

    % Put \widetilde{B} label OUTSIDE with an arrow so it can't be obscured
    \node[anchor=west, fill=none, inner sep=1.2pt] (tLab) at ($(tTR)+(-1.8*\small,-1.45*\small)$) {$\widetilde{B}$};

    % Length annotations
    % Side length of \widetilde{B}: \widetilde{l}_N (brace to the left)
    \draw[decorate,decoration={brace,amplitude=5pt}]
      ($(tBL)+(-0.24,0)$) -- node[midway,xshift=-11pt, fill=white, inner sep=1pt] {$\widetilde{l}_N$}
      ($(tBL)+(-0.24,\outer)$);

    % Side length of a fine box: l_N (brace above the top edge of B)
    \coordinate (BtopL) at ($(tBL)+(0,3*\small)$);
    \coordinate (BtopR) at ($(tBL)+(\small,3*\small)$);
    \draw[decorate,decoration={brace,amplitude=4pt}]
      ($(BtopL)+(0,0.22)$) -- node[midway,yshift=9pt, fill=none, inner sep=1pt] {$l_N$}
      ($(BtopR)+(0,0.22)$);
  \end{tikzpicture}

  \caption{Two steps of the scheme.}
  \label{fig:coarse-fine-box}
\end{wrapfigure}In addition to linking the pathwise regularity to the regularity of laws, this estimate gains two compensations relative to \eqref{eq: superexp ent diss}: the lattice size $N$ is replaced by the effective lattice size $(\widetilde{l}_N/l_N)$ in the error term, and the coarse-graining scales down the noise intensity $\chi_N$ by a factor of $l_N^{-d/2}$.  
   \\\\In order to allow any rate of convergence $\chi_N\to 0$, this estimate can only be used when the scales $l_N\ll \widetilde{l}_N$ are constructed to keep the error term under control. As a result, intermediate scales are needed to go from the microscopic to the fully macroscopic. Theorem \ref{thrm: main integrability estimate} is achieved by patching together as many such scales as are necessary in a {\em multiscale approach}, propagating the gain of integrability from macroscopic to meso- and ultimately microscopic scales. Two steps of this scheme are displayed in Figure \ref{fig:coarse-fine-box}: The whole space is partitioned into boxes $\widetilde{B}$ of side length $\widetilde{l}_N$, each of which is partitioned in turn into boxes of side length $l_N$. \subsection{Statement of Results} \label{sec: theorem statements}
   Since the general pattern for establishing a result such as Theorem \ref{thrm: LDP} is by now well-known and classical \cite{benois1995large, kipnis1998scaling}, the emphasis of the paper will be on those parts of the arguments where either the unboundedness or degeneracy of the local jump rate demands new ideas. In attempting to follow the general framework, the first technical ingredient one needs is  a \emph{superexponential estimate}, which allows us to deduce convergence of functionals of the form $ N^{-d}\sum_{x\in \TND} H(x)(\eta^N(x))^\alpha$ from convergence of the configuration $\eta^N_\bullet$ in a topology like $C_t(L^1_x)_{\rm w}$. We write  $\overline{\eta}^{N,N\epsilon}(x)$ for the averages of a configuration $\eta^N$ on a scale $N\epsilon$: \begin{equation}\label{eq: eta N L} \overline{\eta}^{N,N\epsilon}(x):=\frac{1}{(2\lfloor \epsilon N\rfloor +1)^d} \sum_{y\in B_{\lfloor \epsilon N\rfloor}(x)} \eta^N(y)\end{equation} where the average is over the lattice box of microscopic side length $2\lfloor \epsilon N\rfloor +1$ centred at $x$. The problem of taking limits of the nonlinearity is resolved by the following theorem, which allows us to replace the nonlinearity with the same nonlinearity applied to local averages, up to a small error. \begin{theorem}
	\label{thrm: supex}[Superexponential Estimate for the Diffusivity] Fix $a\in (0,\infty)$, and let $\eta^N_\bullet$ be as in Theorem \ref{thrm: LDP} with initial data $\eta^N_0\sim \Pi^N_a$. Then, for all $\delta>0$, \begin{equation} \label{eq: supex conclusion}
	 \limsup_{\epsilon\to 0} \limsup_{N\to \infty} \frac{\chi_N}{N^d} \log \mathbb{P}_{\Pi^N_a}\left(\int_0^\tf \frac1{N^d}\sum_{x\in \TND} V^N_\alpha(\eta^N_t,\epsilon,x) dt >\delta \right)=-\infty
	\end{equation} where $V^N_\alpha$ is the error given by  \begin{equation}
		\label{eq: VNA} V^N_\alpha(\eta^N,\epsilon,x):=\left|(\eta^N(x))^\alpha - (\overline{\eta}^{N,N\epsilon}(x))^\alpha\right|.
	\end{equation}
\end{theorem}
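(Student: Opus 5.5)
We follow the classical scheme of \cite{kipnis1989hydrodynamics,benois1995large}: reduce the superexponential bound to a variational problem involving the Dirichlet form \eqref{eq: dirichlet on laws}, and then solve that variational problem by a one-block/two-block argument. The difference from the classical setting is that the integrability of $(\eta^N)^\alpha$ is supplied by the multiscale integrability estimate (Theorem \ref{thrm: main integrability estimate}), and the coarse-graining regularity comes from Lemma \ref{lemma: reg by paths}.

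\textbf{Step 1 (truncation).} First we truncate at a level $M$. Write $V^N_\alpha\le V^{N,M}_\alpha+R^{N,M}$, where $V^{N,M}_\alpha$ replaces $x\mapsto x^\alpha$ by the bounded Lipschitz function $\phi_M(x)=(x\wedge M)^\alpha$. The remainder $R^{N,M}$ is controlled by
\[ (\eta^N(x))^\alpha 1_{\eta^N(x)>M}+(\overline{\eta}^{N,N\epsilon}(x))^\alpha 1_{\overline{\eta}^{N,N\epsilon}(x)>M}. \]
Its time integral is made superexponentially small, uniformly in $N$ and $\epsilon$, as $M\to\infty$. For this we invoke Theorem \ref{thrm: main integrability estimate}, the uniform integrability of $(\eta^N)^\alpha$ at speed $N^d/\chi_N$, together with convexity: Jensen's inequality gives $(\overline{\eta}^{N,N\epsilon})^\alpha\le \overline{(\eta^N)^\alpha}^{N,N\epsilon}$, so the averaged term is dominated by the microscopic one.

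\textbf{Step 2 (reduction to a variational problem).} For the bounded part we use the exponential Chebyshev inequality and the Feynman--Kac formula with the invariant measure $\Pi^N_a$. For any $\gamma>0$,
\[ \frac{\chi_N}{N^d}\log \PP_{\Pi^N_a}\Big(\int_0^\tf \tfrac1{N^d}\sum_x V^{N,M}_\alpha\,dt>\delta\Big)\le -\gamma\delta+\tf\sup_{f}\Big\{\gamma\,\EE_{f\Pi^N_a}\Big[\tfrac1{N^d}\sum_x V^{N,M}_\alpha\Big]-\frac{\chi_N}{N^d}\d_N(f)\Big\}. \]
Here the supremum runs over probability densities $f$. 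By convexity we may replace $f$ by its translation average, which makes $f$ translationally invariant as in Lemma \ref{lemma: gallilean pdfs}. It then suffices to show that, for each fixed $K$,
\[ \sup\Big\{\EE_{f\Pi^N_a}\big[V^{N,M}_\alpha(\cdot,\epsilon,0)\big]\;:\;\chi_N N^{-d}\d_N(f)\le K\Big\}\to 0 \]
as $N\to\infty$ and then $\epsilon\to 0$. Densities with larger Dirichlet form make the bracket negative, because $V^{N,M}_\alpha\le 2M^\alpha$ is bounded.

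\textbf{Step 3 (one- and two-block estimates).} The Dirichlet form bound $N^{-d}\d_N(f)\le K/\chi_N$ alone does not give local equilibration, since $\chi_N\to 0$ and the jump rates degenerate near $\eta=0$. This is the main obstacle. We handle it with the multiscale bound \eqref{eq: preintro mesoscopic est}, applied to a chain of scales $1=l^{(0)}\ll l^{(1)}\ll\dots\ll l^{(k)}=\epsilon N$. The scales are chosen so that each error term
\[ \chi_N^\delta\,(l^{(j)})^{-d/2}\big(l^{(j+1)}/l^{(j)}\big)^2 \]
is small. The main term is $(l^{(j+1)})^2\chi_N N^{-d}\d_N(f)\le K(l^{(j+1)})^2 N^{-2}\cdot N^2$, which we need to keep small relative to the macroscopic scale $\epsilon N$.

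For the fine scales we use a discrete Poincaré inequality for $(\cdot)^{\alpha/2}$ across neighbouring boxes. This shows that, under $f\Pi^N_a$, the coarse-grained quantities $(\Lambda_B\eta^N)^{\alpha/2}$ vary little between adjacent boxes at every level, because $\ell^2$ gradient control on the coarse lattice bounds the oscillation. Telescoping over the levels then compares $\phi_M$ of the box average at scale $l^{(1)}$ with $\phi_M(\overline{\eta}^{N,N\epsilon})$. This is where the Lipschitz property of $\phi_M$ and the fact that $u\mapsto u^{2/\alpha}$ is Hölder on bounded sets are needed.

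The genuinely microscopic comparison, between $\phi_M(\eta^N(0))$ and the block average at the first mesoscopic scale, is a one-block estimate. We carry it out classically: the bounded-box Dirichlet form goes to zero, the measure $f$ restricted to the box converges to a mixture of canonical measures, and the equivalence of ensembles for $\Pi^N_a$ with $\phi_M$ bounded completes the argument.

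The main difficulty is choosing the scale hierarchy so that every error term vanishes uniformly for arbitrary $\chi_N\to 0$. In this step the expectation $\EE(1+\|(\eta^N)^\alpha\|_{\ell^1})$ appearing in \eqref{eq: preintro mesoscopic est} must also be bounded. We plan to bound it by again restricting to densities of controlled entropy via Theorem \ref{thrm: main integrability estimate}.
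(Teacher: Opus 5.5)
Your Steps 1--2 (truncation via the integrability estimate, Feynman--Kac, restriction to translation-invariant densities) match the paper in outline. The gap is in Step 3, at the microscopic comparison between $\phi_M(\eta^N(0))$ and the first block average, which you propose to carry out ``classically'': box Dirichlet form tends to zero, the restricted law converges to a mixture of canonical measures, equivalence of ensembles. That argument is not available here, for exactly the reason you yourself name as the main obstacle. In a box of fixed size, the state space $(\chi_N\mathbb{N})^{\Lambda}$ grows with $N$ even under a mass cutoff, and the canonical measures change with $N$. So there is no fixed finite space on which to extract a limit point with vanishing Dirichlet form. The quantitative substitute, a local spectral gap, is spoiled by the degenerate rates, the smallest being $\chi_N^{\alpha-1}\to 0$. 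The paper says explicitly that the compactness behind the classical one-block estimate is lost for this reason. Your chain of scales does not help as you set it up, since you hand the passage $1\to l^{(1)}$ to the classical argument.

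The missing idea, which also makes the hierarchy unnecessary, is the pointwise part \eqref{eq: reg by path conc} of Lemma \ref{lemma: reg by paths}. For sites at any lattice distance $k$, it bounds $\mathbb{E}_{f_N\Pi^N_a}[((\eta^N(x))^{\alpha/2}-(\eta^N(y))^{\alpha/2})^2]$ by $\chi_N k^2N^{-d}\d_N(f_N)$ plus an error $C\chi_N^\delta\mathbb{E}(1+\|(\eta^N)^\alpha\|_{\ell^1})$ that does \emph{not} depend on $k$. The paper proceeds as follows:
\begin{enumerate}
\item bound $|\eta^N(0)-\overline{\eta}^{N,N\epsilon}(0)|$ by the average over the $\epsilon N$-box of $|\eta^N(0)-\eta^N(y)|$;
\item pass to $((\eta^N(0))^{\alpha/2}-(\eta^N(y))^{\alpha/2})^2$ by an elementary inequality, with large values absorbed by an entropy penalty $\gamma\cH(\eta^N)$, which is admissible by \eqref{eq: ET 1};
\item apply \eqref{eq: reg by path conc} with $k\le d\epsilon N$.
\end{enumerate}
The main term is then at most $C\epsilon^2\chi_N N^{2-d}\d_N(f_N)$, absorbed by the penalty once $\epsilon$ is small. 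The error is $C\chi_N^\delta(1+\chi_N N^{2-d}\d_N(f_N))$, negligible for any $\chi_N\to 0$. This gives the one- and two-block estimates in a single stroke. The factor $(\widetilde{l}_N/l_N)^2$ in \eqref{eq: preintro mesoscopic est} that forces a scale hierarchy only arises when the full discrete Dirichlet energy is needed for a Sobolev embedding, i.e.\ in the integrability estimate, not here.

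Two bookkeeping points also need fixing. First, the Feynman--Kac penalty is $\chi_N N^{2-d}\d_N(f)$, not $\chi_N N^{-d}\d_N(f)$. With your normalisation the main term $(l^{(j+1)})^2\chi_N N^{-d}\d_N(f)$ is of order $K(l^{(j+1)})^2$ rather than $K(l^{(j+1)}/N)^2$. Second, to bound $\mathbb{E}_f\|(\eta^N)^\alpha\|_{\ell^1}$ you must build the mass constraint into the functional: insert $1(\eta^N\in X_{N,b})$ and restrict to $G_N(\Pi^N_a,b)$ via Lemma \ref{lemma: gallilean pdfs}. Then use Proposition \ref{prop: int dirichlet form} with Lemma \ref{lemma: orlicz interpolation}, not the pathwise Theorem \ref{thrm: main integrability estimate}. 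Without the mass restriction, canonical measures at high density have zero Dirichlet form and arbitrarily large $\mathbb{E}_f\|(\eta^N)^\alpha\|_{\ell^1}$.
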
 It is this estimate in which both degenerate and superlinear diffusion threaten to derail the usual methods of `one- and two-block estimates' \cite{kipnis1989hydrodynamics,benois1995large,kipnis1998scaling}. These arguments rely on the rapid convergence to equilibrium on large microscopic boxes, possibly separated by a small macroscopic distance. For the rescaled zero-range process, the Dirichlet form \eqref{eq: dirichlet on laws} will see all of the jump rates, including the smallest nonzero rate $\chi^{\alpha-1}_N\to 0$, and the compactness used in the classical argument may be lost as the jump rate degenerates. For this reason, the usual technique cannot be used: To replace it, we develop an argument exploiting pathwise-type estimates similar to \eqref{eq: preintro mesoscopic est} in Section \ref{sec: supex}.  \\ \\   The appearance of a superlinearity $u\mapsto u^\alpha$ in \eqref{eq: VNA} adds a separate fundamental difficulty, which may be seen as follows. While $\overline{\eta}^{N, N\epsilon}$ necessarily remains bounded as $N\to \infty$, potentially with disadvantageous dependence in $\epsilon$, it is \emph{a priori} possible that, on some sequence of events of probability $\exp(-zN^d/\chi_N)$ for fixed $z<\infty$, that one sees a divergence of the $L^\alpha_{t,x}$-norm $$\int_0^\tf \frac1{N^d}\sum_{x\in \TND} (\eta^N_t(x))^\alpha dt\to \infty. $$ The same would thus hold for $V^N_\alpha$ for every $\epsilon$, which would thus exclude a smallness condition such as \eqref{eq: supex conclusion}. 
In fact, something even slightly stronger is needed. An examination of how the conclusion \eqref{eq: supex conclusion} is obtained from the one- and two- block estimates shows that we must replace both instances of $u\mapsto u^\alpha$ in \eqref{eq: VNA} with a Lipschitz nonlinearity in order to complete the theorem. This substitution will introduce a further error, and quantifying this error demands a \emph{uniform} integrability estimate on $(\eta^N_t(x))^\alpha$, in order  that the contributions to $\int_0^T N^{-d}\sum_xV^N_\alpha(\eta^N_t,\epsilon,x) dt$ from the time-space region $\{\eta^N_t(x)>M\}$ are small, with superexponentially high probability, in the limit $M\to \infty$. For $b>0$, we set $X_{N,b}$ to be the set of all configurations with the mass bound \begin{equation}\label{eq: def XNB} X_{N,b}:=\left\{\eta^N: \frac1{N^d}\sum_{x\in \TND} \eta^N(x)\le b\right\}. \end{equation} With this defined, the requisite integrability is provided by the following estimate. \begin{theorem}
	\label{thrm: main integrability estimate}[Uniform Integrability of the Diffusivity] Let $\chi_N\to 0$ be any sequence and fix $a, b>0$.  Then, in the notation of Theorem \ref{thrm: supex}, there exists a constant $\beta>0$ and a strict Young function $\Phi$, depending only on the sequence $(\chi_N)_{N\ge 1}$, such that \begin{equation} \label{eq: main integrability conclusion} \limsup_N \frac{\chi_N}{N^d}\log \mathbb{E}_{\Pi^N_a}\left[\exp\left(\frac{\beta N^d}{\chi_N}\int_0^\tf \|(\eta^N_s)^\alpha\|_{\ell_\Phi} 1(\eta^N_s\in X_{N,b})ds \right)\right]<\infty.\end{equation} 
\end{theorem}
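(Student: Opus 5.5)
The proof will reduce the exponential moment to a variational problem for the Dirichlet form and then run the multiscale scheme outlined in the introduction. By the Feynman--Kac formula, with $\Pi^N_a$ invariant, the exponential moment in \eqref{eq: main integrability conclusion} is bounded by $\exp(\tf \lambda_N)$. Here $\lambda_N$ is the largest eigenvalue of the generator perturbed by the potential $\frac{\beta N^d}{\chi_N}\|(\eta^N)^\alpha\|_{\ell_\Phi}1(\eta^N\in X_{N,b})$. By the Rayleigh--Ritz formula it therefore suffices to show, for every probability density $f_N$ with respect to $\Pi^N_a$, that
\begin{equation*}
\beta\, \mathbb{E}_{f_N\Pi^N_a}\left[\|(\eta^N)^\alpha\|_{\ell_\Phi}1(\eta^N\in X_{N,b})\right] \le C + \chi_N N^{-d}\,\d_N(f_N)
\end{equation*}
with $C$ independent of $N$. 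The indicator and the functional are translation invariant and convex in the appropriate sense. Averaging $f_N$ over lattice translations does not increase $\d_N$, so by Lemma~\ref{lemma: gallilean pdfs} we may restrict to translationally invariant $f_N$. This is the setting of Lemma~\ref{lemma: reg by paths}.

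Next I choose the scales. I fix a decreasing chain $\widetilde{l}^{(0)}_N = N \gg l^{(1)}_N \gg \dots \gg l^{(K)}_N = 1$, with ratios $r_k = l^{(k-1)}_N/l^{(k)}_N$. The ratios are chosen so that the error term $C\chi_N^\delta (l^{(k)}_N)^{-d/2} r_k^2$ in \eqref{eq: preintro mesoscopic est} is at most $\chi_N^{\delta/2}$. Since $\chi_N\to 0$ at an arbitrary rate, the number of steps $K=K_N$ may grow, but only as slowly as the sequence $(\chi_N)$ dictates. This is why $\Phi$ and $\beta$ depend on $(\chi_N)$.

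At each step $k$, Lemma~\ref{lemma: reg by paths} controls the discrete $H^1$-seminorm of $(\Lambda_B\eta^N)^{\alpha/2}$ across the $r_k^d$ sub-boxes of each box at level $k-1$. A discrete Sobolev/Poincaré inequality on the grid of side $r_k$ then gives an $L^{2^*}$ (or $L^q$, with $q>2$ fixed) gain. Concretely, the $\ell^{q\alpha/2}$ norm of $\Lambda_B\eta^N$ over sub-boxes is bounded by the $\ell^{\alpha}$ norm of the parent-level average, plus the gradient term, plus the small error. The mass bound from $X_{N,b}$ starts the induction at the macroscopic level, since $\Lambda_{\TND}\eta^N\le b$. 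The error term contains $\mathbb{E}\|(\eta^N)^\alpha\|_{\ell^1}$, which is reabsorbed because its coefficient $\chi_N^{\delta/2}$ is small. The gradient terms sum to at most $\sum_k (l^{(k-1)}_N)^2 \chi_N N^{-d}\d_N(f_N)$. I would rescale each step's contribution with weights $w_k$, for instance normalizing by $(l^{(k-1)}_N)^{-2}$ and summing a convergent series, so that the total Dirichlet-form cost is at most $\chi_N N^{-d}\d_N(f_N)$.

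Finally, I convert the gain into a Young function. Chaining the inequalities yields uniform bounds on averages of $(\Lambda_B\eta^N)^\alpha$ with some superlinear integrability at every scale, with constants that deteriorate in $k$. At the microscopic scale $l^{(K)}_N=1$ this gives control of $\mathbb{E}\,\Psi_N((\eta^N)^\alpha)$ for an $N$-dependent family of Young functions. Choosing $\Phi$ as a strict Young function dominated by all of them, growing slowly enough relative to $K_N$ and the accumulated constants, produces the required bound. By a de la Vallée-Poussin-type construction this is possible once the constants are shown to depend only on $k$ and not on $N$.

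The main obstacle is the bookkeeping in the iteration. The gain at each scale must survive the compounding constants over $K_N\to\infty$ steps. I would first try a fixed Sobolev exponent per step and interpolate against the $\ell^1$ bound, so that the estimates stay linear in the quantity being controlled. If compounding still fails, I would fall back on taking only the Orlicz gain from a bounded number of steps where the scales are coarse, and treat the finest scales by the pathwise entropy bound \eqref{eq: superexp ent diss}, which becomes effective once $\chi_N^\delta l^2\lesssim 1$.
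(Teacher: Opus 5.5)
\textbf{Main gap: globalising the local gain.} Inside a parent box $\widetilde B$, Lemma \ref{lemma: reg by paths} and the Sobolev inequality control only the excess $\Delta_{\widetilde B}=\bigl[\|(\Lambda_B\eta^N)^\alpha\|_{\ell^p(B\subset\widetilde B)}-(1+\lambda)(\Lambda_{\widetilde B}\eta^N)^\alpha\bigr]_+$. They control it only in expectation, that is, in $\ell^1$ over the $(N/\widetilde l_N)^d$ parent boxes. To feed the next step you need $\Delta_{\widetilde B}$ in the same superlinear norm as the main term.
\begin{itemize}
\item With a fixed exponent $p$ this conversion costs $(N/\widetilde l_N)^{d(1-1/p)}$. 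That is a power of $N$, which $\chi_N^{\delta/2}$ cannot pay unless $\chi_N\lesssim N^{-\gamma}$.
\item Your claim that the coefficient $\chi_N^{\delta/2}$ is small is exactly where this loss is overlooked. In the paper one factor $\chi_N^{\delta/2}$ pays for $(\widetilde l_N/l_N)^2$ and the other for this conversion. The resulting coefficients are bounded, not small.
\item The paper's fix is to choose one strict Young function in advance with $(\Phi^{-1})^p$ convex. By Lemma \ref{lemma: orlicz consistency}, the $\ell_\Phi$-norm over fine boxes is then dominated by the $\ell_\Phi$-norm over coarse boxes of the local $\ell^p$-norms.
\item The same $\Phi$ satisfies $\sup_N\chi_N^{\delta/2}\Psi^{-1}(N^d)<\infty$ and $\Psi^{-1}(u)\lesssim u^{1/d}$. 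The $\ell^1\to\ell_\Phi$ cost $\Psi^{-1}((N/\widetilde l_N)^d)$ is thus paid by $\chi_N^{\delta/2}$ in the error term and by the factor $\widetilde l_N/N$ in the Dirichlet term (Lemma \ref{lemma: 1sr}, Proposition \ref{prop: Young function}).
\end{itemize}
This is why $\Phi$ depends on $(\chi_N)$; the number of steps $K_N$ is not the reason. It is also why the tailoring must enter every step of the chain, rather than being extracted afterwards by a de la Vall\'ee-Poussin argument.

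The compounding over $K_N\to\infty$ steps, which you leave open, is handled by the $(1+\lambda_k)$ prefactor from the Peter--Paul form of Proposition \ref{prop: discrete Sobolev}. The paper takes $\lambda_k=\max\bigl((l^k_N)^{-d/4},(l^{k+1}_N/N)^{1/2}\bigr)$ with scale ratios $\gtrsim\chi_N^{-\delta/8}$.

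\textbf{Second gap: the top scale.} The assertion $\Lambda_{\TND}\eta^N\le b$ on $X_{N,b}$ is false. In fact $\Lambda_{\TND}\eta^N=\|(\eta^N)^\alpha\|_{\ell^1}^{1/\alpha}$, while $X_{N,b}$ only bounds $\|\eta^N\|_{\ell^1}$. A bound on $\|(\eta^N)^\alpha\|_{\ell^1}$ is precisely what is not available a priori. The chain therefore ends with $\prod_k(1+\lambda_k)\,\mathbb{E}\|(\eta^N)^\alpha\|_{\ell^1}$, with an order-one coefficient. This term cannot be reabsorbed by smallness. The paper removes it with Lemma \ref{lemma: orlicz interpolation}: strictness of $\Phi$ and the mass bound give $\|(\eta^N)^\alpha\|_{\ell^1}\le\epsilon\|(\eta^N)^\alpha\|_{\ell_\Phi}+z(\Phi,b,\epsilon)$, which is absorbed into the left-hand side. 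This is the only place the mass bound enters.

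\textbf{Smaller slip: Feynman--Kac normalisation.} The penalty is $\chi_NN^{2-d}\d_N(f_N)$, not $\chi_NN^{-d}\d_N(f_N)$, because the generator is $N^2\cL_N$. With the correct normalisation the Dirichlet costs are dominated by the top scale. Your reweighting by $(l^{(k-1)}_N)^{-2}$ is then unnecessary. It would also not work, since it rescales the main terms too and breaks the telescoping.
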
 Here, $\|\cdot\|_{\ell_\Phi}$ denotes the Orlicz norm associated to the Young function $\Phi$, whose definition will be recalled in \eqref{eq: orlicz} below. As will be recalled, the \emph{strictness} of a Young function means that $\Phi(u)/u\to \infty$ as $u\to \infty$, so that the corresponding Orlicz norm penalises large values strictly more strongly than the corresponding $L^1$ norm and allows a suitable interpolation estimate. In particular, Theorem \ref{thrm: main integrability estimate} expresses the uniform integrability motivated above. \\ \\ A further technical aspect, where the superlinear growth plays a nontrivial role, is the exponential tightness, which allows the use of compactness methods in the topology of $\mathbb{D}$ in both upper and lower bounds. Attempting to reproduce the usual argument \cite{benois1995large,kipnis1998scaling}, we find that we must show that quantities like $$ \int_s^tN^{-d}\sum_{x\in \TND} \Delta H(x) \eta^N_u(x)^\alpha du  $$ are small, uniformly in $|s-t|$, for any given test function $H\in C^2(\TTd)$. This is the same requirement of uniform integrability discussed above Theorem \ref{thrm: main integrability estimate}, and as an application of the theorem we obtain \begin{proposition} \label{prop: tightness}
	For every $a\in (0,\infty)$, the processes $\eta^N_\bullet$ are exponentially tight in the topology of $\mathbb{D}$ when the initial data $\eta^N_0$ is sampled from $\Pi^N_a$. That is, for every $z<\infty$, there exists a compact $\mathcal{K}\subset\mathbb{D}$ such that \begin{equation}
		\label{eq: ET conclusion} \limsup_{N\to \infty} \frac{\chi_N}{N^d}\log \mathbb{P}_{\Pi^N_a}\left(\eta^N_\bullet \not \in \mathcal{K}\right)\le -z.
	\end{equation} \end{proposition}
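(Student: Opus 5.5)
We follow the classical route of \cite{benois1995large,kipnis1998scaling}. Exponential tightness in $\mathbb{D}$ reduces to two statements about the pairings $\langle H,\eta^N_t\rangle:=N^{-d}\sum_{x}H(x)\eta^N_t(x)$, for $H$ in a countable dense subset of $C^2(\TTd)$, together with a compact containment condition. Compact containment is the easy part. The total mass $N^{-d}\sum_x\eta^N_t(x)$ is conserved by the dynamics. Under $\Pi^N_a$ it is a normalised sum of independent variables with exponential moments, so by Cramér's theorem (at speed $N^d$, and hence at speed $N^d/\chi_N$ after the particle rescaling), $\PP(\eta^N_0\notin X_{N,b})$ is superexponentially small as $b\to\infty$. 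Consequently $\eta^N_t\in X_{N,b}$ for all $t$ outside an event of probability at most $e^{-zN^d/\chi_N}$. The set of nonnegative measures with mass at most $b$ is weakly compact, so it remains only to control the modulus of continuity of $t\mapsto\langle H,\eta^N_t\rangle$ for each fixed $H$, superexponentially in $N$ and uniformly in $|t-s|\le\gamma$ as $\gamma\to0$.

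For fixed $H$, we write Dynkin's formula
\[
\langle H,\eta^N_t\rangle-\langle H,\eta^N_s\rangle=\int_s^t \frac{1}{2N^d}\sum_x \Delta_N H(x)(\eta^N_u(x))^\alpha du + M^{N,H}_t-M^{N,H}_s .
\]
We then treat the martingale and the drift separately.

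\textbf{Martingale.} We use the exponential martingales
\[
\exp\left(\frac{\lambda N^d}{\chi_N}\bigl(\langle H,\eta^N_t\rangle-\langle H,\eta^N_0\rangle\bigr)-\int_0^t e^{-\frac{\lambda N^d}{\chi_N}\langle H,\eta^N_u\rangle}\mathcal{L}_Ne^{\frac{\lambda N^d}{\chi_N}\langle H,\eta^N_u\rangle}du\right).
\]
A Taylor expansion of the compensator shows it is bounded by $\frac{N^d}{\chi_N}C(\lambda,H)\int (N^{-d}\sum_x(\eta^N_u(x))^\alpha)\,du$, up to lower-order terms. Each jump changes $\langle H,\eta\rangle$ by $\chi_N N^{-d}\cdot N^{-1}\nabla H$, which is what makes the exponent at speed $N^d/\chi_N$ of order one. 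Hence both the martingale and the drift reduce to controlling $\int_s^t N^{-d}\sum_x(\eta^N_u(x))^\alpha du$ on intervals of length $\gamma$. We discretise $[0,\tf]$ into $O(\gamma^{-1})$ intervals and take a union bound; this costs only a polynomial factor, which is negligible at exponential speed.

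\textbf{Drift.} This step is where Theorem \ref{thrm: main integrability estimate} enters, and it is the main obstacle. A mere $L^1_t$ exponential bound on $\int\|(\eta^N)^\alpha\|_{\ell^1}$ would not give smallness over short intervals. The Orlicz strictness provides the interpolation: for any $M$,
\[
N^{-d}\sum_x \eta^\alpha(x)\le M+ N^{-d}\sum_x\eta^\alpha 1_{\eta^\alpha>M}\le M+\theta(M)\|\eta^\alpha\|_{\ell_\Phi},
\]
where $\theta(M)\to0$ as $M\to\infty$ because $\Phi(u)/u\to\infty$. Integrating over $[s,t]$ gives a bound $M\gamma+\theta(M)\int_0^\tf\|\eta^\alpha\|_{\ell_\Phi}1_{X_{N,b}}du$. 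By Chebyshev's inequality and \eqref{eq: main integrability conclusion},
\[
\PP\Bigl(\int_0^\tf\|\eta^\alpha\|_{\ell_\Phi}1_{X_{N,b}}du>K\Bigr)\le e^{-(\beta K-C)N^d/\chi_N}.
\]
We first choose $K$ so that this probability is below $e^{-zN^d/\chi_N}$, then choose $M$ with $\theta(M)K<\delta/2$, and finally choose $\gamma<\delta/(2M)$. The same argument, applied inside the exponential martingale estimate, controls the compensator, so that $\sup_{|t-s|\le\gamma}|M^{N,H}_t-M^{N,H}_s|$ is also small with superexponential probability.

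Combining these bounds gives the modulus-of-continuity condition for each $H$. A diagonal choice over the countable family of $H$, with summable probabilities $e^{-(z+k)N^d/\chi_N}$, together with the mass bound, produces the compact set $\mathcal{K}$ required in \eqref{eq: ET conclusion}. We must also check that the Orlicz norm in Theorem \ref{thrm: main integrability estimate} is normalised to dominate $N^{-d}\sum_x\eta^\alpha 1_{\eta^\alpha>M}$ with such a factor $\theta(M)$; this is standard for strict Young functions, since $u1_{u>M}\le\Phi(u)\sup_{v>M}v/\Phi(v)$ applied after scaling.
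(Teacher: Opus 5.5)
\textbf{Compact containment is missing.} Your modulus-of-continuity argument is essentially the paper's Step 2 together with the argument it imports from \cite{gess2023rescaled}. That argument consists of Dynkin's formula, the exponential martingale whose compensator is bounded by $C(\lambda,H)\frac{N^d}{\chi_N}\int N^{-d}\sum_x(\eta^N_u(x))^\alpha du$, and short-time smallness of $\int_s^t\|(\eta^N_u)^\alpha\|_{\ell^1}du$ from Theorem~\ref{thrm: main integrability estimate}.

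The path space $\mathbb{D}$ takes values in $L^1(\TTd,[0,\infty))$ with the metric $d$ of weak convergence against $C(\TTd)$. It does not take values in the space of measures. The mass-bounded set $\{u\ge 0: \int u\le b\}$ is not relatively compact there. For example, $u_n=n^d 1_{[0,1/n]^d}$ has unit mass, but its only weak limit is $\delta_0\notin L^1$, so no subsequence converges in $(L^1,d)$. Conservation of mass therefore does not give the compact subset of the state space required by \cite[Theorems 4.1, 4.4]{feng2006large}, or by any Skorokhod compactness criterion. So it is not true that only the modulus of continuity remains.

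What you need is a superexponential equi-integrability bound that is uniform in time. Theorem~\ref{thrm: main integrability estimate} cannot provide this. It only controls the time integral $\int_0^\tf\|(\eta^N_s)^\alpha\|_{\ell_\Phi}ds$, which does not rule out concentration at individual times, and compact containment is needed at every $t\in[0,\tf]$.

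The paper instead proves
\[
\limsup_{M\to\infty}\limsup_{N}\frac{\chi_N}{N^d}\log\PP_{\Pi^N_a}\bigl(\sup_{t\le \tf}\cH(\eta^N_t)>M\bigr)=-\infty .
\]
Entropy sublevel sets are uniformly integrable (de la Vall\'ee-Poussin) and weakly closed, hence compact in $(L^1,d)$. The bound itself comes from three ingredients:
\begin{itemize}
\item the static estimate $\limsup_N\frac{\chi_N}{N^d}\log\EE_{\Pi^N_a}[\exp(\beta\frac{N^d}{\chi_N}\cH(\eta^N_0))]<\infty$ for $\beta<\alpha$, from \cite{gess2023rescaled};
\item stationarity of $\Pi^N_a$;
\item a union bound over the jump times. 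Their number is polynomial in $N^d/\chi_N$ outside a superexponentially small event, by Poisson domination of the jump count on $X_{N,b}$.
\end{itemize}

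\textbf{A smaller slip in the interpolation, easily repaired.} The bound $N^{-d}\sum_x\eta^\alpha 1_{\eta^\alpha>M}\le\theta(M)\|\eta^\alpha\|_{\ell_\Phi}$ with $\theta(M)\to 0$ is false in general. For constant $\eta\equiv c$ with $c^\alpha>M$, the left side is $c^\alpha$, while $\|\eta^\alpha\|_{\ell_\Phi}=c^\alpha/\Phi^{-1}(1)$. Your scaling argument breaks because, after dividing by $t=\|\eta^\alpha\|_{\ell_\Phi}$, the threshold becomes $M/t$, which need not be large.

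The bound does hold on $X_{N,b}$, with $\theta(M)=2/\Psi^{-1}(M^{1/\alpha}/b)$. This follows from Orlicz--H\"older together with Chebyshev applied to $\{\eta>M^{1/\alpha}\}$, and it is exactly Lemma~\ref{lemma: orlicz interpolation}. Since you already restrict to $X_{N,b}$, you should invoke that lemma.
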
 Once these technical results are in place, Theorem \ref{thrm: LDP} follows. Theorems \ref{thrm: supex} - \ref{thrm: main integrability estimate} and Proposition \ref{prop: tightness} extend from the equilibrium measure $\Pi^N_a$ to the non-equilibrium setting of Theorem \ref{thrm: LDP} as in \cite[Theorem 3.2]{kipnis1989hydrodynamics}, \cite[Lemma 4.5]{quastel1999large}, see also \cite[Section 10]{gess2023rescaled}. The arguments of \cite{benois1995large, kipnis1998scaling} produce large deviation estimates with potentially different rate functions $\mathcal{I}^{\rm up}, \mathcal{I}^{\rm lo}$, see also \cite[Sections 9 - 10]{gess2023rescaled} for some technical aspects due to the superlinear growth of the nonlinearity $u\mapsto u^\alpha$ and \cite{fehrman2019large,gess2023rescaled,heydecker2023large,fgh2025} for a discussion of the problem of obtaining matching upper and lower bounds. The upper bound is strengthened by showing that $\mathcal{I}_\rho$ may be taken to be infinite unless certain entropy dissipation $\int_0^\tf \cD_\alpha(u_s)ds$ is finite, see \cite[Chapter 5, Theorem 7.1]{kipnis1998scaling} and \cite{fgh2025} for a simplified proof. For the lower bound,  \cite[Theorem 39]{fehrman2019large} identifies the rate function $\mathcal{I}_\rho$ with the rate function $\mathcal{I}^{\rm lo}$, which is defined a priori as the lower semicontinuous envelope of $\mathcal{I}_\rho$ restricted to a set of sufficiently regular trajectories. Together, these resolve the problem of matching the upper and lower bounds and conclude the large deviation principle \cite[Equations (1.9 - 1.10)]{gess2023rescaled} asserted in Theorem \ref{thrm: LDP}. 
	
    \subsection{A Sketch Proof of Theorem \ref{thrm: main integrability estimate}} \label{sec: sketch} Having explained the key difficulties posed by degenerate and superlinear diffusion, we now unveil the key ideas in their resolution. In the previous work \cite{gess2023rescaled}, and works on related SPDEs \cite{fehrman2019large,fehrman2021well}, both the integrability and the superexponential estimate were treated by proving a pathwise regularity result, leveraging the fundamental regularity of the {\em skeleton equation} defining the rate function $\mathcal{I}_\rho$: \begin{equation}
		\label{eq: sk} \partial_t u_t=\frac12\Delta (u_t^\alpha)-\nabla\cdot(u_t^{\alpha/2} g); \qquad g\in L^2([0,\tf]\times\TTd, \RRd).
	\end{equation} This equation was analysed in \cite{fehrman2019large}; the fundamental regularity, for which an estimate can be obtained, is the time-integrated entropy dissipation, given by the Sobolev norm $\|\nabla u^{\alpha/2}\|_{L^2_{t,x}}^2<\infty$. As discussed below \eqref{eq: superexp ent diss}, leveraging this regularity directly at the level of the particle system or SPDE requires a particular scaling relation on $\chi_N, N$, and we must therefore argue differently. \\ \\ The  integrability estimate Theorem \ref{thrm: main integrability estimate} turns out to be the most challenging of the steps towards Theorem \ref{thrm: LDP}, and is therefore the main focus of the paper. Several plausible strategies for such an estimate leverage properties of the PME \eqref{eq: PME}, such as the `coming down from infinity', or attempting to close $L^\alpha_x$- or $H^{-1}_x$-estimates \cite{vazquez2007porous}. Such methods are, at least for large deviations and in general dimension, doomed to failure from the outset. The quantities which one finds in the course of such an estimate are stronger than those which can be estimated from \eqref{eq: sk}, and closing such an estimate would ultimately be incompatible with the rate function $\mathcal{I}_\rho$ for which we aim. For this reason, it is the intrinsic regularity of \eqref{eq: sk}, and not \eqref{eq: PME}, which must dictate the strategy. \\
		The most na\"ive approach to leveraging the regularity discussed above is to use lattice discretisation of the Sobolev embeddings and the discrete regularity obtained from \eqref{eq: superexp ent diss} to obtain a discrete integrability estimate. This is the strategy carried out in \cite[Sections 5-7]{gess2023rescaled}, but requires the same scaling relation used therein. One finds, whether arguing at the pathwise level as in \cite{gess2023rescaled}, or through the Feynmann-Kac formula as in Lemma \ref{lemma: reg by paths} below, an estimate of the form \begin{equation} \begin{split}\label{eq: where the scaling relation comes from} \mathbb{E}_{f_N\Pi^N_a}\left[\left\|\nabla_N {(\eta^N)^{\alpha/2}}\right\|^2_{L^2_x(\TND)}\right]  & \le C\chi_N N^{2-d}\mathfrak{d}_N(f_N) \\ &\hs +C\chi_N^\delta N^2\mathbb{E}_{f_N\Pi^N_a}\left(1+\|(\eta^N)^\alpha\|_{\ell^1(\TND)}\right) \end{split} \end{equation} for some (fixed) $\delta\in (0,1]$, where $f_N$ is any probability density with respect to $\Pi^N_a$ and where $\d_N$ is the Dirichlet form associated to the dynamics. The first term is of the correct form to deduce the desired estimates from the Feynman-Kac formula, see, for example, the proof of \cite[Theorem 2.1]{kipnis1989hydrodynamics}), and \eqref{eq: FK formula} below. However, the factor $\chi_N^\delta N^2$ in the error term means that the error term on the second line diverges, unless the same scaling relation $\chi_N^\delta N^2\lesssim 1$ as in \cite{gess2023rescaled} is imposed. The exponent $\delta\le 1$ arises from from controlling certain error terms close to zero in the case $\alpha\in (1,2]$, but in fact the estimate cannot be improved beyond setting $\delta=1$, corresponding to variance terms $N^2 {\rm Var}_{\Pi^N_a}(\eta^N(x)-\eta^N(y))\sim N^2\chi_N$. For this reason, we see the qualitatively different behaviour of variance blowup in the regime $\chi_N N^2\to \infty$, destroying the pathwise regularity \eqref{eq: superexp ent diss}, and anticipate this threshold being the fundamental one.  \\  The three key insights of Theorem \ref{thrm: main integrability estimate} are that we may avoid any scaling relationship between $\chi_N$ and $N$, because \begin{enumerate}[label=\roman*).] 
			\item the error term gains an advantageous scaling when replacing $\eta^N$ by a local average on scale $l_N\gg 1$, which permits measuring the regularity on some larger scale $l_N\ll \widetilde{l}_N\le N$; \item such estimates imply a gain of integrability across scales $l_N, \widetilde{l}_N$, which may be averaged over boxes $\widetilde{B}$ partitioning $\TND$; and \item the desired estimate may be obtained by applying (ii) to a sequence of scales $1=l^1 \ll l^2 \dots \ll l^K=N$.  
		\end{enumerate}  We now expound these points in more detail. In the following, we outline the strategy of Theorem \ref{thrm: main integrability estimate} based on the estimate \eqref{eq: preintro mesoscopic est} above. For the clarity of exposition, we will temporarily engage in the two simplifying assumptions that \begin{assumption}\label{assumptions sketch}
			\begin{enumerate}[label=\alph*)] \item $\chi_N\lesssim N^{-\gamma}$, for some $\gamma>0$, and
			\item $N=2^m$ is a dyadic integer, for some $m\ge 1$.
		\end{enumerate}
		\end{assumption} \step{1. Regularity across scales} The first point is the deduction of the estimate displayed at \eqref{eq: preintro mesoscopic est} above, of which we now give more details. We recall that we have a large box $\widetilde{B}$ of side length $\widetilde{l}_N$, which is partitioned by $\cP|_{\widetilde{B}}$ into boxes $B$ of side length $l_N$. The coarse-graining is defined by \begin{equation}\label{eq: def Lambda} \Lambda_B\eta^N:=\left(\frac{1}{|B|}\sum_{x\in B}(\eta^N(x))^\alpha\right)^{1/\alpha} \end{equation} and the lattice Sobolev norm estimated from the Dirichlet form is $$\left\|\nabla_{\widetilde{l}_N/l_N}(\Lambda_B \eta^N)^{\alpha/2}\right\|_{\ell^2(B\in \cP|_{\widetilde{B}})}^2:=\left(\frac{l_N}{\widetilde{l}_N}\right)^{d-2}\sum_{B\sim B'} ((\Lambda_B \eta^N)^{\alpha/2}-(\Lambda_{B'}\eta^N)^{\alpha/2})^2.$$ The key gain from equation \eqref{eq: preintro mesoscopic est} is that, given any scale $l_N\gg 1$ and any rate of decay $\chi_N\to 0$, we may find a larger scale $\widetilde{l}_N\gg l_N$ so that the error term remains under control. \\ \step{2. Gain of Integrability across scales} The second key point concerns the deduction of integrability statements on the smaller scale. If one were to na\"ively first postprocess the small-scale regularity \eqref{eq: preintro mesoscopic est} to the full lattice, the error term would gain a factor of $(N/\widetilde{l}_N)^2$, undoing the compensation produced by the effective side length. The strategy is instead to first deduce integrability, and then to postprocess the integrability into global statements. Define, for $p>1$ to be chosen later, \begin{equation} \label{eq: Delta toy} \Delta_{\widetilde{B}}(\eta^N):=\left[\|(\Lambda_B \eta^N)^\alpha\|_{\ell^p(B\in \cP|_{\widetilde{B}})}-2(\Lambda_{\widetilde{B}}\eta^N)^\alpha\right]_+\end{equation} where the $\ell^p$ norm is with respect to the uniform measure on $\cP|_{\widetilde{B}}$, and here, and throughout the paper, we write $\ell^p(B\in \cP)$ (and similar expressions) to emphasise which variable is being summed. A discrete Sobolev-Gagliardo-Nirenberg inequality, see Proposition \ref{prop: discrete Sobolev}, and \eqref{eq: preintro mesoscopic est} show that, for $p>1$ small enough depending on the dimension,\begin{equation}
			\label{eq: sobolev embedding toy} \begin{split}\mathbb{E}_{f_N \Pi^N_a}\left[ \Delta_{\widetilde{B}}(\eta^N)\right] & \le  C\widetilde{l}_N^2\chi_N N^{-d} \d_N(f_N)  \\ & \hs + Cl_N^{-d/2} \left(\frac{\widetilde{l}_N}{l_N}\right)^2\chi_N^\delta\mathbb{E}_{f_N\Pi^N_a}\left(1+\|(\eta^N)^\alpha\|_{\ell^1(\TND)}\right). \end{split} 
		\end{equation} It is this estimate which may be postprocessed to the full scale without undoing the compensation in \eqref{eq: preintro mesoscopic est}. We now suppose that we have a partition $\widetilde{\cP}$ of the full torus $\TND$ into boxes $\widetilde{B}$ of side length $\widetilde{l}_N$, and form a partition $\cP$ refining $\widetilde{\cP}$ by gluing the partitions $(\cP|_{\widetilde{B}}: \widetilde{B}\in \widetilde{\cP})$.  For $p>1$ as above, we find \begin{equation}\begin{split}\label{eq: upscale toy}
			\|(\Lambda_{B}\eta^N)^\alpha\|_{\ell^p(B\in \cP_N)} & \le \|\Delta_{\widetilde{B}}(\eta^N) + 2(\Lambda_{\widetilde{B}}\eta^N)^\alpha\|_{\ell^p(\widetilde{B}\in \widetilde{\cP}_N)} \\ & \le (N/\widetilde{l}_N)^{d(1-1/p)}\|\Delta_{\widetilde{B}}\|_{\ell^1(\widetilde{B}\in \widetilde{\cP}_N)} + 2\|(\Lambda_{\widetilde{B}}\eta^N)\|_{\ell^p(\widetilde{B}\in \widetilde{\cP}_N)} \end{split} 
\end{equation} where the factor $(N/\widetilde{l}_N)^{d(1-1/p)}$ arises from interpolating between $\ell^1$ and $\ell^p$ norms on $\widetilde{\cP}$. Using Assumption \ref{assumptions sketch}a), we may take $p>1$ sufficiently close to 1 so that $N^{1-1/p}\chi_N^{\delta/2}\lesssim 1$, and combining the two previous displays yields an estimate of the form \begin{equation}
	\begin{split} \label{eq: one step toy} &\mathbb{E}_{f_N\Pi^N_a}\left[\|(\Lambda_{B}\eta^N)^\alpha\|_{\ell^p(B\in \cP)}-2\|(\Lambda_{\widetilde{B}}\eta^N)^\alpha\|_{\ell^p(\widetilde{B}\in \widetilde{\cP})} \right] \\ & \hspace{3cm}\le  C\chi_N N^{2-d}\d(f_N) \\&\hspace{3cm} + Cl_N^{-d/2}\left(\frac{\widetilde{l}_N}{l_N}\right)^2\chi_N^{\delta/2}\mathbb{E}_{f_N\Pi^N_a}\left(1+\|(\eta^N)^\alpha\|_{\ell^1(\TND)}\right).  \end{split} 
\end{equation} \step{3. Multiscale Recombination} The final key point is that an estimate proving Theorem \ref{thrm: main integrability estimate} may be obtained by combining estimates of the form \eqref{eq: one step toy}. Using both parts of the Assumption \ref{assumptions sketch}, we may find a finite $K<\infty$ and integer sequences $$l^1_N=1\ll l^2_N \dots \ll l^K_N=N$$ so that each $l^k_N$ divides $l^{k+1}_N$, and so that $(l^{k+1}_N/l_N)^2\chi_N^{\delta/2} \lesssim 1.$ Associated to the $l^k_N$, we construct a sequence of partitions $$ \cP^1=\TND\prec \cP^2\prec \dots \prec \cP^K=\{\TND\} $$ where each element of $\cP^k$ is a box of side length $l^k_N$, and each $\cP^k$ refines $\cP^{k+1}$. We refer to Figure \ref{fig:coarse-fine-box} for an illustration of this scheme. For the finest partition, there is no averaging and $\|(\Lambda_{\cP^1}\eta^N)^\alpha \|_{\ell^p(\cP^1)}=\|(\eta^N)^\alpha\|_{\ell^p(\TND)}$, while at the coarsest scale $(\Lambda_{\TND} \eta^N)^\alpha=\|(\eta^N)^\alpha\|_{\ell^1(\TND)}$. We can therefore obtain a telescoping sum \begin{equation}\begin{split}\label{eq: telescope} &\mathbb{E}_{f_N\Pi^N_a}\|(\eta^N)^\alpha\|_{\ell^p(\TND)}-2^{K-1} \mathbb{E}_{f_N\Pi^N_a} \|(\eta^N)^\alpha\|_{\ell^1(\TND)}\\
&=\mathbb{E}_{f_N\Pi^N_a}\left[\|(\Lambda_{B^1}\eta^N)^\alpha\|_{\ell^p(B^1\in \cP^1)}-2\|(\Lambda_{{B^2}}\eta^N)^\alpha\|_{\ell^p({B^2\in \cP^2})} \right] \\ & + 2\mathbb{E}_{f_N\Pi^N_a}\left[\|(\Lambda_{B^2}\eta^N)^\alpha\|_{\ell^p(B^2\in\cP^2)}-4\|(\Lambda_{{B^3}}\eta^N)^\alpha\|_{\ell^p(B^3\in \cP^3)} \right] \\ &\hs \vdots \\ & + 2^{K-2}\mathbb{E}_{f_N\Pi^N_a}\left[\|(\Lambda_{B^{K-1}}\eta^N)^\alpha\|_{\ell^p(B^{K-1}\in \cP^{K-1})}-2^{K-1}\|(\Lambda_{{B^K}}\eta^N)^\alpha\|_{\ell^p(B^K\in {\cP^K})} \right] \end{split}\end{equation} where each term on the right-hand side is of the same form as the left-hand side of \eqref{eq: one step toy}, and thanks to the choices of $l^k_N$, the prefactor in error term remains bounded. Gathering all the terms, and using statistical stationarity to absorb the error terms into the final term, we get, for a new value of $C$, \begin{equation}
	\label{eq: nearly conclusion toy} \mathbb{E}_{f_N\Pi^N_a} \|(\eta^N)^\alpha\|_{\ell^p(\TND)} \le (2^{K-1}+C)\mathbb{E}_{f_N\Pi^N_a}\|(\eta^N)^\alpha\|_{\ell^1(\TND)}+C\chi_N N^{d-2}\d(f_N). 
\end{equation} We finally impose a mass bound. If we additionally impose that $f^N$ must be supported on $X_{N,b}$, defined in \eqref{eq: def XNB}, a simple interpolation of $\ell^p$-norms shows that the $\ell^1$ norm appearing on the right-hand side may be bounded by an arbitrarily small multiple of the $\ell^p$ norm plus a large $b$-dependent constant, which permits to absorb the first term of the right-hand side into the left-hand side \begin{equation}
	\label{eq: nearly conclusion toy} \mathbb{E}_{f_N\Pi^N_a} \|(\eta^N)^\alpha\|_{\ell^p(\TND)} \le C+C\chi_N N^{d-2}\d(f_N) 
\end{equation} where all the constants are allowed to depend on $b$, but are otherwise uniform in $N, \chi_N, f_N$.  It is standard, using the Feynman-Kac formula (for example, \cite{varadhan1991scaling}, \cite[Proof of Theorem 2.1]{kipnis1998scaling}) that such an estimate is sufficient to prove an estimate of the form claimed in Theorem \ref{thrm: main integrability estimate}, completing the proof.  	\\ \\ We conclude the sketch proof by by discussing the role of Assumptions \ref{assumptions sketch}. Hypothesis b), that $N$ is a dyadic integer, is only used in step 3 to ensure the existence of the scales $l^k_N$ such that each $l^{k+1}_N/l^k_N$ is an integer, so that all the boxes may be taken of the same size. In the main text, the issue is resolved for all $N$ by introducing {\em weightings}: We omit this from the sketch proof, because the resolution introduces no further essential difficulties in the analysis, but makes the notation more cumbersome. In contrast, hypothesis a) is crucially used in two places in the argument sketched above: first to choose $p>1$ to control the error terms, and secondly in ensuring that the iterative scheme terminates after a finite (possibly large, but $N$-independent) number of steps. The first issue is solved by tailoring a strict Young function $\Phi$ and working with the resulting Orlicz norm rather than $\ell^p$-norms. For the second issue, generalising to $K=K_N\to\infty$ requires introducing a further tuneable parameter $\lambda\in (0,1]$, which keeps the prefactors appearing in the analogue of telescoping sum \eqref{eq: telescope} bounded.
		\subsection{Organisation of the Paper} The paper is structured as follows. The remainder of this section is a literature review and discussion. Section \ref{sec: prelim} recalls some definitions of frequent use. The key technical content is in Section \ref{sec: int}, in which the above sketch proof of Theorem \ref{thrm: main integrability estimate} is made rigorous: Along the way, machinery helpful for the superexponential estimate is established, and this section is therefore presented first. Section \ref{sec: ET} proves Proposition \ref{prop: tightness}, and finally Section \ref{sec: supex} gives the proof of Theorem \ref{thrm: supex}. Appendix \ref{sec: Orlicz Appendix} provides some technical tools on Orlicz norms, deferred from Section \ref{sec: int}.

		\subsection{Literature Review and Discussion} \subsubsection{Particle Systems with Unbounded and Degenerate Diffusion} We first review some of the literature on particle models that show at least one of the two features of unbounded and degenerate diffusion of interest. 
		A `stick-breaking' process, which produces the PME in the limit, has been studied in the hydrodynamic limit  \cite{suzuki1993hydrodynamic,ekhaus1995stochastic, feng1997microscopic}. Notably, in these works, the integrability estimate is carried out only in expectation and the argument cannot be generalised to the LDP analysis. Recently, a sequence of works  have derived both porous medium \cite{goncalves2009hydrodynamic,blondel2018convergence,goncalves2023exclusion} and fast diffusion equations \cite{goncalves2023exclusion} equations from exclusion processes with kinetic constraints.  The rescaling of the zero-range process considered in the present work was previously introduced in \cite{gess2023rescaled}, and a hydrodynamic limit and large deviation principle were shown under a condition that $\chi_N\to 0$ sufficiently fast. The rate function is the same as that derived here, and a geometric meaning as an energy-dissipation inequality was derived \cite[Theorem 4]{gess2023rescaled}. We also refer to \cite{landim1995large,landim1997hydrodynamic,kipnis1998scaling,bernardin2022non,menegaki2021quantitative,menegaki2022consistence} for further works on the hydrodynamic limit and large deviations of the zero-range process.  		\subsubsection{Multiscale Methods} The use of different scales for obtaining estimates in LDP is classical and goes back to the one- and two- block estimates in the Varadhan method \cite{varadhan1991scaling,kipnis1989hydrodynamics,kipnis1990large}. The appearance of numerical-type continuity from the regularity of laws is tacitly part of the standard proof of the two-block estimate, see also \cite[Proposition 3.5]{fgh2025} for a discussion of the connection. Numerical-type regularity, and a multi-scale approach to a replacement lemma is obtained via the Dirichlet form in \cite[Lemma 4.4, Theorem 5.1]{quastel1999large}. Energy estimates, obtaining the large-scale regularity, are also obtained from the Dirichlet form in \cite{bertini2009dynamical,kipnis1998scaling}. To the best of the authors' knowledge, the current work represents the first time that the regularity across scales in LDP has been systematised, both in terms of i) expressing mesoscopic scale regularity through estimating the regularity through an intrinsic regularity \eqref{eq: superexp ent diss}, \eqref{eq: preintro mesoscopic est} dictated by the large-scale regularity of the skeleton equation \cite{fehrman2019large}, and ii) sharply identifying the scales across which regularity can be estimated.\medskip \\ The approach we develop here was inspired by other works in the study of lattice systems, most particularly the works of Bauerschmidt-Bodineau \cite{bauerschmidt2020spectral,bauerschmidt2021log} and Bauerschmidt-Park-Rodriguez \cite{bauerschmidt2024discrete} exploiting averaging on a `hierarchical' decomposition of the lattice. The proof technique in Section \ref{sec: int} also somewhat resembles the coarse-graining approach to renormalisation group analysis \cite{wilson1971renormalization}: Lemma \ref{lemma: reg by paths} may be understood as establishing a relationship between the Dirichlet form of the true process and that of an effective process at scale $l_N$, where particles may jump from any site in a block to any site in a neighbouring block, and the total rate of such jumps depends only on the $\alpha$-averages on the entire blocks. Let us also refer to \cite{frohlich1981kosterlitz,frohlich1983berezinskii} for an example multiscale summation in the renormalisation group.  The use of regularity appearing across multiple scales has also been exploited in the theory of stochastic homogenisation \cite{gloria2017quantitative,armstrong2019quantitative,gloria2020regularity,gloria2021quantitative}. The philosophy resembles that of \cite{armstrong2016quantitative,armstrong2021homogenization}, where large-scale regularity is used to aid analysis on smaller scales, but the proof technique is different: In the proof of Theorem \ref{thrm: main integrability estimate}, a single mechanism is responsible for the creation of (suitably measured) regularity at all scales.   \subsubsection{LDP of SPDEs and Macroscopic Fluctuation Theory} The study of large deviations of microscopic systems is closely related to macroscopic fluctuation theory (MFT) \cite{bertini2006non,bertini2015macroscopic,derrida2007non}, which describes the non-equilibrium statistical mechanics and fluctuations of observables via an Ansatz for the large-deviations rate function \cite{bertini2015macroscopic}. This has motivated the study of {\em fluctuating hydrodynamics} \cite{dean1996langevin,donev2014reversible,donev2014dynamic} and SPDEs with conservative noise, constructed so that the large deviation rate functions coincide with those of a given particle system. The same rate function $\mathcal{I}_\rho$ in Theorem \ref{thrm: LDP} and derived in \cite{gess2023rescaled} corresponds to the conservative SPDE \eqref{eq: SPDE}, see also \cite{dirr2016entropic,giacomin1999deterministic}. Although SPDEs in this form generally admit at most only trivial solutions without the mollification \cite{konarovskyi2019dean}, the well-posedness of SPDEs with a conservative noise term, including \eqref{eq: SPDE}, was established by  \cite{fehrman2021well} as soon as $\delta>0$. We refer to \cite{cerrai2011approximation,faris1982large,jona-lasinio1990large,hairer2015large} for works on the large deviations of SPDEs; in general, scaling relations between noise intensity and correlation may lead to renormalisation counterterms appearing in the rate function \cite{hairer2015large}. A large deviation principle for \eqref{eq: SPDE} is proven in \cite[Theorem 6.8]{fehrman2019large}, for a wider class of nonlinearities $\varphi(u)$ replacing the porous medium nonlinearity $u^\alpha$, and \cite{dirr2020conservative} proved the analogous large deviation principle corresponding to the simple symmetric exclusion process, in both cases under a scaling relation on $\epsilon\delta^{-(d+2)}\to 0$ sufficient to guarantee large-scale pathwise regularity.  \subsubsection{Parameter Relations in LDP} We first comment on Theorem \ref{thrm: LDP} in light of the analysis in \cite{gess2023rescaled}. The previous analysis and a first attempt at the one-block estimate led to the conjectural dichotomy between the regimes $\chi_NN^2\lesssim 1$ and $\chi_N N^2\to \infty$, see \cite[Figure 1]{gess2023rescaled}, depending on whether pathwise regularity or local equilibration dominates. The argument presented in Section \ref{sec: sketch} unveils a deeper structure. If $\chi_N N^2\lesssim 1$, the multiscale argument requires only one step; if $\chi_N N^2\to \infty$ but $\chi_N N^{2/2}\ll 1$, then two steps of the multiscale estimate are required, and in general, if $\chi_N N^{2/(k-1)}\to \infty$ but $\chi_N N^{2/k}\ll 1$ then the multiscale iteration sketched above terminates after $k$ steps of rescaling.   \\ There is a natural correspondence between the parameters of the rescaled ZRP considered here and the parameters of SPDEs such as \eqref{eq: SPDE}. Comparing the scale fluctuations at each point and the correlation lengths leads to the identifications $ \epsilon \delta^{-d} \sim {\chi_N},  \delta \sim N^{-1} $. In this way, the speeds of the large deviation rate functions $\epsilon^{-1}\sim N^d/\chi_N$ coincide, and the scaling relation $\chi_N N^2\lesssim 1$ in \cite{gess2023rescaled} matches to the condition $\epsilon \delta^{-d-2}\lesssim 1$ needed for large-scale spatial regularity in the SPDE setting \cite{fehrman2019large,dirr2020conservative}. By contrast, the result of the current paper corresponds to any scaling regime $\epsilon \delta^{-d}\to 0$, suggesting that the same methods may allow to derive large deviation principles under this weaker condition for SDPEs.\\
        Notably, in the SPDE setting, formal power counting arguments, in the spirit of \cite{hairer2018class}, show that \eqref{eq: SPDE} is subcritical exactly in the joint scaling regime $\epsilon \delta^{-d}\to 0$. This might indicate that significant challenges in the method developed here would have to be expected {in the case $\chi_N\sim 1$, which we are currently unable to handle}.
%        We are unable to handle the case $\chi_N=\chi>0$, corresponding to the scaling-critical case $\epsilon \delta^{-d}\sim 1$, where we anticipate the appearance of an additional term arising from the It\^o-Stratonovich correction, similar to the appearance of a massive correction in the rate function in \cite{hairer2015large}.
        \section{Preliminaries} 
 \label{sec: prelim} \subsection{The Process} We first give a precise definition of the process $\eta^N_\bullet$ of study, see also \cite{gess2023rescaled}. We consider the zero-range process on a discrete torus $\{0, 1, \dots, N-1\}^d$ with a jump rate $g(k)=dk^\alpha$. In a configuration $k$ where there are $k(x) 
		\in \mathbb{N}, x\in \{0, 1, \dots, N-1\}^d$ particles at each site, an exponential clock of rate $g(k(x))$ is initialised at each site; when the first clock rings, a particle is moved from site $x$ to one of its neighbours $y\sim x$, chosen randomly and independently of the clocks, and the process begins again. This defines a Markovian dynamics in $k\in \mathbb{N}^{\{0,1,\dots,N-1\}^d}$. \\ It is standard to perform a hydrodynamic rescaling of space and time in order to obtain a hydrodynamic limit. As in \cite{gess2023rescaled}, a further rescaling is necessary to produce degenerate diffusion, introducing the parameter $\chi_N\to 0$. We set $$ \eta^N_t(x):= \chi_N k\left(N^2 t\chi_N^{\alpha-1}, N x\right) $$ which amounts to, in addition to the usual hydrodynamic rescaling, counting of each particle as of size $\chi_N$, and making the local jump rate $2d(\eta^N(x))^\alpha/\chi_N$. The rescaled spatial variable $x$ will now run over the rescaled discrete torus $$\TND:=\left\{0, N^{-1}, \dots, \frac{N-1}{N}\right\}^d \subset [0,1)^d$$  and $\eta^N$ defines a Markov process in the state space $X_N:=(\chi_N \mathbb{N})^{\TND}$ with generator given by $N^2 \cL_N$, where \begin{equation}
	\label{eq: linear generator} \cL_N F(\eta^N):=\frac12\sum_{x\sim y} \frac{(\eta^N(x))^\alpha}{\chi_N}\left(F(\eta^{N,x,y})-F(\eta^N)\right).
\end{equation}
In this definition, the sum runs over all pairs $\{x, y\}$ of neighbouring sites in $\TND$, and $\eta^{N,x,y} := \eta^N+\chi_N 1_y-\chi_N 1_x$ is the configuration obtained from $\eta^N$ by moving one particle from $x$ to $y$. Here, and in the sequel, we write $1_x$ for the function on $\TND$ which is equal to 1 at $x\in \TND$, and zero elsewhere. It will be convenient to isolate the invariant sets $X_{N,b}$ of the subset of the state space given by imposing an upper bound on the mass $$ X_{N,b}:=\left\{\eta^N\in X_N: \frac1{N^d} \sum_{x\in \TND} \eta^N(x)\le b\right\}. $$  
A standard calculation shows that the process admits a continuum family of reversible equilibrium distributions\begin{equation}\label{eq: eq}
	\Pi^N_a(\eta^N):=\prod_{x\in \TND} \frac{(a/\chi_N)^{\alpha(\eta/\chi_N)}}{((\eta/\chi_N)!)^\alpha Z_\alpha(a/\chi_N)}, \qquad  \rho>0
\end{equation} where $Z_\alpha$ is a suitable normalisation constant. Where $\rho\in C(\TTd,(0,\infty)$ is a profile varying at the macroscopic scale, we define the slowly varying local equilibrium with the same notation: \begin{equation}\label{eq: svleq}
	\Pi^N_\rho(\eta^N):=\prod_{x\in \TND} \frac{(\rho(x)/\chi_N)^{\alpha(\eta/\chi_N)}}{((\eta/\chi_N)!)^\alpha Z_\alpha(\rho(x)/\chi_N)}, \qquad  \rho>0
\end{equation} It may easily be verified that the equilibrium distributions have the {\em integration by parts} property that, for any $F: X_N\to \mathbb{R}$ and any $x\in \TND$, \begin{equation} \label{eq: int by parts}
	\mathbb{E}_{\Pi^N_\rho}[F(\eta^N){(\eta^N(x))^\alpha}]=\rho^\alpha \EE_{\Pi^N_\rho}[F(\eta^N+\chi 1_x)].
\end{equation} \paragraph{\em{Conventions on Lattice Geometry}} We have chosen the definition of the process $\eta^N_\bullet$ to already include the rescaling of space and time, and as a result, the particle sites $x, y$ will always take values in the rescaled torus $\TND$. Correspondingly, when speaking of the `side length' of a box $B\subset \TND$, we will always mean the microscopic side length, namely the number of edges $l \ge 1$ of the discretisation $\TND$ on each edge of the box. Since we will always use `side length' to mean microscopic side length, we will not repeat the `microscopic' in the sequel.\\ \\  We will refer to the {\em lattice distance} of two points $x, y$ to be the length $k\ge 1$ of the shortest path $x_0=x, x_1, \dots, x_k=y$ with $x_j\in \TND$ and each pair $x_j, x_{j+1}$ adjacent in $\TND$. In contrast, $|\cdot|_\infty$ will always be used to denote the macroscopic distance between two points on the torus $\TTd$ induced by the $\ell^\infty$-norm on the Euclidean space $\RRd$.
\subsection{Feynman-Kac formula and Dirichlet Form} The main focus of the paper will be in obtaining the large deviation estimates, encoded in Theorems \ref{thrm: supex} and \ref{thrm: main integrability estimate}, necessary to prove Theorem \ref{thrm: LDP}. The basic tool we will use is the Feynman-Kac formula, see \cite{kipnis1989hydrodynamics}, \cite[Section 10.3]{kipnis1998scaling}, which reduces exponential estimates to a variational problem. In the context of the model introduced above, with the large deviation speed $N^d/\chi_N$, this formula shows that, for any $N$ and any function $F: X_N\to \mathbb{R}$, \begin{equation}
	\label{eq: FK formula} \begin{split} & \frac{\chi_N}{N^d}\log \mathbb{E}_{\Pi^N_a}\left[\exp\left(\frac{N^d}{\chi_N}\int_0^\tf F(\eta^N_s)ds \right)\right] \\& \hspace{2cm} \le \sup_{f_N}\left\{\mathbb{E}_{f_N\Pi^N_a}[F(\eta^N)]-\chi_N N^{2-d}\d_N(f_N)\right\}. \end{split}
\end{equation} Here, the supremum on the right-hand side runs over all probability density functions $f_N$ on $X_N$ with respect to the invariant measure $\Pi^N_a$, and $\d_N$ is the Dirichlet form corresponding to the generator \eqref{eq: linear generator}, displayed at \eqref{eq: dirichlet on laws}. Thanks to the Feynman-Kac formula, the proofs of Theorems \ref{thrm: supex}-\ref{thrm: main integrability estimate} reduce to estimating suitable expectations in terms of Dirichlet forms, and we henceforth make this reduction without further comment. \subsection{Topology of the Large Deviations Principle} In order to give meaning to a large deviations principle, we must specify a suitable topological space. It is straightforward to associate each configuration $\eta^N\in X_N$ to a nonnegative function on $\TTd$, for example extending $\eta^N$ to be constant on small cubes around each lattice point. With this identification, we let $d$ be a metric on $L^1(\TTd, [0,\infty))$ corresponding to the weak convergence $\langle \varphi, u_n\rangle \to \langle \varphi, u\rangle, \varphi \in C(\TTd)$, and take the path space $\mathbb{D}$ to be the  Skorokhod space  \begin{equation}
	\mathbb{D}:=\mathbb{D}\left([0,\tf], (L^1(\TTd,[0,\infty)), d)\right). 
\end{equation} It is relative to this topology which we will prove exponential tightness in Proposition \ref{prop: tightness}. 

\section{Large Deviation Integrability Estimate} \label{sec: int} The aim of this section will be to prove Theorem \ref{thrm: main integrability estimate}. We prove the following, which is sufficient thanks to the Feynman-Kac formula \eqref{eq: FK formula}. \begin{proposition}[Integrability via Dirichlet Form] \label{prop: int dirichlet form} Let $\chi_N\to 0$ be any sequence, and fix $a, b>0$. Then there exist a strict Young function $\Phi$ and $\vartheta=\vartheta(a,b)>0$ such that \begin{equation}
					\label{eq: main integrability conclusion'} \limsup_N\sup_{f_N}\left\{\mathbb{E}_{f_N\Pi^N_a}\left[\|(\eta^N)^\alpha\|_{\Phi}1(\eta^N\in X_{N,b})\right]-\vartheta \chi_N N^{2-d}\mathfrak{d}_N(f_N)\right\}<\infty
				\end{equation} where the supremum runs over all probability density functions $f_N$ with respect to $\Pi^N_a$. \end{proposition}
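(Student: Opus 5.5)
The plan is to make the sketch of Section \ref{sec: sketch} rigorous, removing Assumption \ref{assumptions sketch} by two devices: an Orlicz norm in place of $\ell^p$, and a damping parameter $\lambda\in(0,1]$ in the telescoping sum. I will proceed in four steps.

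\textbf{Step 1: the two-scale Dirichlet form estimate.} I first prove the regularity-across-scales estimate \eqref{eq: preintro mesoscopic est} for a box $\widetilde{B}$ partitioned into boxes $B$. For each pair of adjacent boxes $B\sim B'$, I write $(\Lambda_B\eta)^\alpha-(\Lambda_{B'}\eta)^\alpha$ as an average over matched sites $x\in B$, $x'=x+(\text{shift})\in B'$. Each matched pair is joined by a lattice path of length $\lesssim \widetilde{l}_N$. Using the integration by parts identity \eqref{eq: int by parts}, I transport particles along these paths. Cauchy--Schwarz on paths then bounds the expected squared difference of $(\Lambda_B\eta)^{\alpha/2}$ by $\widetilde{l}_N^2$ times the Dirichlet form restricted to edges within $\widetilde{B}$. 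The extra factor $\chi_N N^{-d}$ comes from normalisation.

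The error terms come from the shift by $\chi_N 1_x$ in \eqref{eq: int by parts}, that is, from discreteness of particle size. Averaging over $|B|=l_N^d$ sites reduces their variance, which produces the gain $l_N^{-d/2}\chi_N^\delta$. Here the restriction to translation-invariant $f_N$ (Lemma \ref{lemma: gallilean pdfs}) is used: replacing $f_N$ by its average over lattice translations does not increase $\d_N$, by convexity, and does not change the translation-invariant functionals involved.

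\textbf{Step 2: one step of gain of integrability.} Next I apply a discrete Sobolev--Gagliardo--Nirenberg inequality (Proposition \ref{prop: discrete Sobolev}) on the coarse lattice $\cP|_{\widetilde B}$. It bounds $\Delta_{\widetilde{B}}$ from \eqref{eq: Delta toy}, with $\ell^p$ replaced by an Orlicz norm $\ell_{\Phi}$, by the coarse Sobolev norm. The constant $2$ in \eqref{eq: Delta toy} absorbs the mean. I then sum over $\widetilde B\in\widetilde{\cP}$ as in \eqref{eq: upscale toy}. The interpolation loss $(N/\widetilde l_N)^{d(1-1/p)}$ is replaced by an Orlicz-type loss $\Phi^{-1}$-dependent factor. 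I choose $\Phi$ with growth so slow, for example $\Phi(u)=u\,\psi(u)$ with $\psi\uparrow\infty$ tailored to $(\chi_N)$, that this factor times $\chi_N^{\delta/2}$ stays bounded for every $N$. The Orlicz lemmas of Appendix \ref{sec: Orlicz Appendix} supply the needed Hölder and interpolation inequalities.

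\textbf{Step 3: multiscale recombination.} I choose scales $1=l^1_N\ll\cdots\ll l^{K_N}_N\sim N$ with ratio $l^{k+1}_N/l^k_N\approx \chi_N^{-\delta/8}$, so that $(l^{k+1}/l^k)^2\chi_N^{\delta/2}\lesssim \chi_N^{\delta/4}$. Non-divisibility is handled by weighted partitions, where boundary boxes are assigned fractional weights. I then form the telescoping sum \eqref{eq: telescope}. Since $K_N$ may diverge, I replace the factor $2$ by $(1+\lambda)$ and weight the $k$th term by $\lambda^k$-type coefficients, so that the total prefactor $\prod_k(1+\lambda_k)$ stays bounded. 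The cost is that the Dirichlet form terms accumulate as $\sum_k (l^{k+1}_N)^2 N^{-2}\chi_N N^{2-d}\d_N$. Because the scales grow geometrically, this sum is dominated by its last term, hence is $\le C\chi_N N^{2-d}\d_N(f_N)$. This fixes $\vartheta$.

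\textbf{Step 4: closing with the mass bound.} Finally, on $X_{N,b}$ the coarsest term $(\Lambda_{\TND}\eta)^\alpha$, which is the $\ell^1$ norm of $\eta^\alpha$, satisfies $\|\eta^\alpha\|_{\ell^1}\le \varepsilon\|\eta^\alpha\|_{\Phi}+C_{\varepsilon,b}$. This holds because $\Phi$ is strict and the mass, the $\ell^1$ norm of $\eta$, is bounded by $b$. I absorb this into the left-hand side, and handle the indicator by localising $f_N$ to $X_{N,b}$, which is an invariant set. This yields \eqref{eq: main integrability conclusion'}.

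I expect the main obstacle to be Step 1, and more precisely controlling the error terms near zero when $\alpha\in(1,2]$, where $u\mapsto u^{\alpha/2}$ is non-Lipschitz. My first attempt there is to regularise by $(\cdot+\chi_N^{\delta'})^{\alpha/2}$ and pay the resulting $\chi_N^\delta$ loss. A secondary difficulty is the simultaneous tuning of $\Phi$ and $\lambda$ in Steps 2 and 3.
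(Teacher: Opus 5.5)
Your overall architecture matches the paper's: restriction to invariant densities, a two-scale Dirichlet-form estimate with an $l_N^{-d/2}$ gain, local Sobolev with prefactor $1+\lambda$, upscaling with loss $\Psi^{-1}(N^d\widetilde l_N^{-d})$, scale ratios $\approx\chi_N^{-\delta/8}$, damped telescoping, and closing with Lemma \ref{lemma: orlicz interpolation}. However, the recombination across scales in your Step 2 does not work as stated.

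The first inequality in \eqref{eq: upscale toy} rests on the exact nesting $\|h\|_{\ell^p(\cP)}=\bigl\|\,\|h\|_{\ell^p(\cP|_{\widetilde B})}\bigr\|_{\ell^p(\widetilde\cP)}$. If you replace $\ell^p$ by $\ell_\Phi$ inside the boxes, the analogue $\|h\|_{\ell_\Phi(\cP)}\le\bigl\|\,\|h\|_{\ell_\Phi(\cP|_{\widetilde B})}\bigr\|_{\ell_\Phi(\widetilde\cP)}$ is false in general. Test it on two equally weighted blocks, with $h$ concentrated on a fraction $\nu$ of one block while the other block carries most of the norm. This shows the inequality requires $\nu\,\Phi(s\Phi^{-1}(1/\nu))\le\Phi(s)$ for $s\in(0,1)$, which fails e.g.\ for $\Phi(u)=u\log(1+u)/\log 2$. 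Since the step is iterated $K_N\to\infty$ times, even a constant loss per scale would be fatal.

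What you need is Lemma \ref{lemma: orlicz consistency}. Keep the $\ell^p$ norm that Proposition \ref{prop: discrete Sobolev} actually produces inside each $\widetilde B$, and use $\ell_\Phi$ only across $\widetilde\cP$. This is valid under \eqref{eq: Young-p consistency}, i.e.\ concavity of $v\mapsto\Phi(v^{1/p})$. That is a structural constraint on $\Phi$ which your proposal never imposes. It makes $\Phi(u)/u^p$ nonincreasing, so $\Phi$ may vanish no faster than $u^p$ at $0$ and may grow no faster than $u^p$ at infinity. A choice such as $\Phi(u)=u\log(1+u)$ violates it near $0$ when $p<2$. Reconciling this constraint with the growth conditions \eqref{eq: choice of Phi} is exactly the content of Proposition \ref{prop: Young function}, which freezes $\Phi'$ on $[0,1]$ and slows its growth by an ODE reparametrisation.

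Two further points need repair.

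\emph{Step 1.} Cauchy--Schwarz along paths only gives the long-range bond bound of Lemma \ref{lemma: really useful observation}. You still need the variational step that converts it into a bound on $\mathbb{E}_{f_N\Pi^N_a}[((\Lambda_B\eta^N)^{\alpha/2}-(\Lambda_{B'}\eta^N)^{\alpha/2})^2]$. The paper inserts $\theta=\frac\alpha2\log(\Lambda_{B'}\eta^{N,x,y}/\Lambda_B\eta^N)$ into $-f_N(\eta^N)(e^\theta-1)-f_N(\eta^{N,x,y})(e^{-\theta}-1)\le(\sqrt{f_N(\eta^N)}-\sqrt{f_N(\eta^{N,x,y})})^2$. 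It then weights by the rate $(\eta^N(x))^\alpha$, averages over $x\in B$, $y\in B'$, and symmetrises. The $l_N^{-d/2}$ gain is not a variance effect. It arises because a single jump changes $(\Lambda_{B'}\eta^N)^\alpha$ only by $\frac{w^N(y)}{w^N(B')}((\eta^N(y)+\chi_N)^\alpha-(\eta^N(y))^\alpha)$, after which one uses $\sqrt{a+b}\le\sqrt a+\sqrt b$. If you first reduce to matched site pairs, e.g.\ via $\bigl|\|u\|_{\ell^2}-\|v\|_{\ell^2}\bigr|\le\|u-v\|_{\ell^2}$ and \eqref{eq: reg by path conc}, the error stays of order $\chi_N^\delta$ at every scale. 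The per-scale errors then no longer decay along the hierarchy, and the sum over $K_N\to\infty$ scales cannot be controlled when $\chi_N$ decays slowly.

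\emph{Step 3.} The Dirichlet term at scale $k$ carries $\lambda_k^{-1}$, from the Peter--Paul step in Proposition \ref{prop: discrete Sobolev}. It also carries the upscaling factor $\Psi^{-1}(N^d(l^{k+1}_N)^{-d})$. So it is $\lambda_k^{-1}(l^{k+1}_N/N)^2\Psi^{-1}(N^d(l^{k+1}_N)^{-d})$, not $(l^{k+1}_N/N)^2$. Bounding it needs the first condition in \eqref{eq: choice of Phi}, $\Psi^{-1}(u)\lesssim u^{1/d}$, which your construction does not address. Moreover, $\lambda_k$ must be of order one at both ends of the hierarchy while $\sum_k\lambda_k$ stays bounded. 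The paper's choice $\lambda_k=\max\{(l^k_N)^{-d/4},(l^{k+1}_N/N)^{1/2}\}$ achieves this.
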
 The section is structured as follows. In Section \ref{sec: int prelim}, we give precise definitions of the various objects needed to make the sketch proof of Theorem \ref{thrm: main integrability estimate} in the introduction rigorous. Some preliminary stochastic estimates are in Section \ref{sec: int basic}, and in Section \ref{sec: int main} we give the proof of Proposition \ref{prop: int dirichlet form}.
		
\subsection{Preliminaries}\label{sec: int prelim} We first set up the technical machinery needed to make the sketch proof in Section \ref{sec: sketch} rigorous.  \paragraph{\emph{Box Partitions}} Given a scale $1\le l_N\le N$, let us write $\mathcal{Q}(N, l_N)$ for the set of partitions $\mathcal{P}_N$ of the torus $\mathbb{T}_N^d$ of the form \begin{equation}\label{eq: prototype P}\mathcal{P}=\left\{B=\prod_{i=1}^n A_i: A_i\in \mathcal{P}_N^i\right\}\end{equation} where each $\mathcal{P}_N^i$ is a partition of the 1-dimensional torus into intervals of length between $l_N$ and $2l_N$.  Each such $\mathcal{P}$ comes with a natural notion of \emph{adjacency}, where we say that two blocks $B, B'\in \mathcal{P}$ are adjacent, and write $B\sim_{\mathcal{P}} B'$, if they can be expressed as $B=\prod_i A_i, B'=\prod_i A'_i$, with $A_i, A'_i\in \mathcal{P}^i_N$, such that $A_i=A'_i$ for all but one $i$, and on this index, there exist adjacent $x\in A_i, y\in A'_i$, so that $A_i, A'_i$ are adjacent elements of $\mathcal{P}^i_N$.  \\\\  Given $\mathcal{P}_N\in \mathcal{Q}(N,l_N)$ and $\widetilde{\mathcal{P}}_N\in \mathcal{Q}(N, \widetilde{l}_N)$, we write $\mathcal{P}_N\prec \widetilde{\mathcal{P}}_N$ to denote that $\mathcal{P}_N$ is a refinement of $\widetilde{\mathcal{P}}_N$, that is, that every $B\in \mathcal{P}_N$ is contained in a (unique) $\widetilde{B}\in \widetilde{\mathcal{P}}_N$. We write $\pi: \cP_N\to \widetilde{\cP}_N, \pi(B):=\widetilde{B}$ for the map taking each block to the element of the coarser partition which contains it. \\ \paragraph{\emph{Weightings}} We introduce weightings on the underlying set $\TND$, which are then inherited by all partitions. In the sequel, a weighting will be a strictly positive measure on the discrete $\sigma$-algebra of $\TND$, which we identify with a function $w^N: \TND\to (0,\infty)$. \\     \paragraph{\emph{Orlicz Norms on Block Functions}} Fix a partition $\mathcal{P}_N$ of $\mathbb{T}^d_N$, a weighting $w^N$. We recall that a {\em Young function} is a map $\Phi:\mathbb{R}\to [0,\infty]$ which is convex, even, lower semicontinuous and neither the zero function nor its convex dual. We will always work with Young functions which are additionally {\em strict}, meaning that \begin{equation}
	\label{eq: strict}\Phi(u)<\infty\text{ for all }u\in \mathbb{R}; \qquad \frac{\Phi(u)}{u}\to \infty \text{ as }u\to \infty
\end{equation} and to ensure the finiteness of all norms, $\Phi(0)=0$. In the sequel the uppercase $\Psi$ will be reserved for the convex dual of a Young function, given by \begin{equation}\label{eq: convex dual}
	\Psi(x):=\sup\{ux-\Phi(u): u\in \mathbb{R}\}.
\end{equation} The definition is such that $\Phi$ is strict if, and only if, $\Psi$ is, and $\Phi$ may be recovered from $\Psi$ by the same formula \eqref{eq: convex dual}. We define the Orlicz norm on functions $h: \mathcal{P}_N\to \mathbb{R}$ given by \begin{equation} \label{eq: orlicz} \|h(B)\|_{l_\Phi(B\in\mathcal{P}_N, w^N)}:=\inf\left\{t\ge 0: \sum_{B\in \mathcal{P}_N} \frac{w^N(B)}{w^N(\TND)}\Phi\left(\frac{h(B)}{t}\right)\le 1 \right\}.\end{equation} Similarly, when $\widetilde{B}\subset \TND$ is a union of sets from $\cP_N$, we write $\cP_N|_{\widetilde{B}}$ for the resulting partition of $\widetilde{B}$, and for a function $h:\cP_N|_{\widetilde{B}}\to \mathbb{R}$, we define \begin{equation}\label{eq: orlicz'} \|h(B)\|_{l_\Phi(B\in\cP_N|_{\widetilde{B}},w^N)}:= \inf\left\{t\ge 0: \sum_{B\in \mathcal{P}_N} \frac{w^N(B)}{w^N(\widetilde{B})}\Phi\left(\frac{h(B)}{t}\right)\le 1 \right\}. \end{equation} We similarly write $\|\cdot\|_{\ell^p(B\in \mathcal{P}_N, w^N)}, \|\cdot\|_{\ell^p(B\in {\mathcal{P}}_N|_{\widetilde{B}}, w^N)}$ for the Lebesgue norms defined with the same normalised measure. {Here, and throughout, we use the nonstandard notation of writing the function with an argument $h(B)$ inside the norm, and identifying the argument $B\in \cP_N$ in the subscript, in order to improve clarity of certain calculations. }Finally, in the case of the trivial partition $\cP_N=\TND$ and $w^N=1$, we omit both arguments and write $\|\cdot\|_{\ell_\Phi}, \|\cdot\|_{\ell^p}$. \\ \paragraph{\emph{Integral Lemmata}} We now present some facts about these norms which we will use in the proof.  We will make use of the following discrete Sobolev-Gagliardo-Nirenberg embedding. \begin{proposition}\label{prop: discrete Sobolev} Let $B$ be a lattice box of dimension $d$, and side lengths between $l, al$, for some $a>1$, equipped with the constant weight $w=1$. For functions $f: B\to \mathbb{R}$, define \begin{equation}
			\|\nabla f(x)\|_{\ell^2(x\in B,1)}^2:=l^{2-d}\sum_{x,y\in X: x\sim y}|f(x)-f(y)|^2.
		\end{equation} For the uniform weighting $w=1$, and for some $p=p(d)>1$, for all $f:B\to \mathbb{R}$ and all $\lambda>0$, it holds that \begin{equation}
			\|f^2(x)\|_{\ell^p(x\in B,1)}\le (1+\lambda)\|f(x)\|^2_{\ell^2(x\in B,1)}+C(1+\lambda^{-1})\|\nabla f(x)\|_{\ell^2(x\in B,1)}^2.
		\end{equation} \end{proposition}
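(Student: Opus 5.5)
Although the display defines $\|\nabla f\|^2$ with a sum over $x,y\in X$, I read this as the sum over adjacent pairs in $B$. The statement is then a discrete Sobolev--Poincaré inequality with a mean-value term, and I would prove it by splitting $f$ into its mean and its fluctuation. Write $\bar f:=|B|^{-1}\sum_{x\in B}f(x)$ and $g:=f-\bar f$. Pointwise, for every $\lambda>0$, Young's inequality gives
\[
f^2=(\bar f+g)^2\le (1+\lambda)\bar f^2+(1+\lambda^{-1})g^2 .
\]
The $\ell^p$ norm of $f^2$ is the $\ell^{2p}$ norm of $f$, squared. Taking $\ell^p$ norms with respect to the normalised uniform measure and using the triangle inequality,
\[
\|f^2\|_{\ell^p}\le (1+\lambda)\bar f^2+(1+\lambda^{-1})\|g\|_{\ell^{2p}}^2 .
\]
By Jensen, $\bar f^2\le \|f\|_{\ell^2}^2$, so the first term already has the required form. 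It therefore remains to show the $\lambda$-independent mean-zero estimate
\[
\|g\|_{\ell^{2p}(x\in B,1)}^2\le C\,\|\nabla f\|^2_{\ell^2(x\in B,1)}
\qquad\text{for some } p=p(d)>1.
\]

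This is a scale-invariant discrete Sobolev--Poincaré inequality, and I would prove it in two steps. First, the discrete Poincaré inequality on a box with side lengths in $[l,al]$:
\[
\frac1{|B|}\sum_{x\in B}|g(x)|^2\le C l^2\,\frac{1}{|B|}\sum_{x\sim y}|f(x)-f(y)|^2 .
\]
I would obtain this by the standard path (canonical path) argument. Write $g(x)=|B|^{-1}\sum_y (f(x)-f(y))$, join each pair $x,y$ by a coordinate-by-coordinate monotone lattice path of length at most $d\,al$, and apply Cauchy--Schwarz along the path. Each edge is used by at most $C l^{d+1}$ such pairs, which gives the factor $l^2$. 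Since $|B|\sim l^d$, the right-hand side is $C\,l^{2-d}\sum_{x\sim y}|f(x)-f(y)|^2=C\|\nabla f\|^2_{\ell^2}$, exactly the normalisation in the statement. This already settles the case $p=1$.

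Second, to gain $p>1$, I would use a discrete Sobolev embedding. One route is to extend $f$ from $B$ to the continuum box by multilinear interpolation on unit cells. This controls the continuum $L^q$ norms by the discrete ones up to constants, and it controls $\int|\nabla \tilde f|^2$ by $\sum_{x\sim y}|f(x)-f(y)|^2$. Then apply the continuum Sobolev--Poincaré inequality on the rescaled box $[0,1]^d$, whose aspect ratio is bounded by $a$, with exponent $2p\le 2^*$ (any $p<\infty$ if $d\le 2$). The rescaling produces precisely the factor $l^{2-d}$. An alternative is a purely discrete route: the $\ell^1$-Sobolev inequality via one-dimensional telescoping (Loomis--Whitney), applied to $|g|^{\theta}$ and combined with Hölder, in the standard way. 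Either route gives $p=p(d)>1$, for instance $p=d/(d-2)$ for $d\ge3$ and $p=2$ for $d\le2$. The constant $C$ depends only on $d$ and $a$.

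The main obstacle is carrying out the second step uniformly in $l$. One must check that the interpolation (or discrete) argument introduces no $l$-dependence other than the exact factor $l^{2-d}$, and that the constants are uniform over boxes with side lengths in $[l,al]$. Uniformity should follow because every such box is bi-Lipschitz equivalent to the unit cube, with constants depending only on $a$. Once the mean-zero estimate holds, the claimed inequality with constants $(1+\lambda)$ and $C(1+\lambda^{-1})$ follows from the first paragraph.
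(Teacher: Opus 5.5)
Your proposal is correct and follows the same route as the paper. The paper's one-line justification combines a discrete Sobolev--Gagliardo--Nirenberg embedding, the lattice Poincar\'e inequality, and a Peter--Paul splitting into mean and fluctuation to make the prefactor of $\|f\|_{\ell^2}^2$ equal to $1+\lambda$; you have simply written out the ingredients the paper cites. One precision for your interpolation route: the direction you need is the discrete $\ell^{2p}$ norm of $g$ bounded by the continuum $L^{2p}$ norm of its multilinear interpolant, which holds by norm equivalence on each unit cell; and since the interpolant's continuum mean $c$ need not vanish, close the argument with $\|g\|_{\ell^{q}}\le 2\|g-c\|_{\ell^{q}}$, valid for every constant $c$ because $g$ has zero discrete mean.
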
 This postprocess the usual (discrete) Sobolev-Gagliardo-Nirenberg embedding \cite[Section 4]{porretta2020note} with the Poincar\'e inequality \cite[Equation (4.6)]{diaconis1996logarithmic}, and using a Peter-Paul argument to make the prefactor in from of the $\ell^2$-norm arbitrarily close to $1$.  \\\\ The next result, whose proof we defer to Appendix \ref{sec: Orlicz Appendix}, will allow us to iterate averaging procedures. \begin{lemma}\label{lemma: orlicz consistency} Let $\cP_N\prec \widetilde{\cP}_N$ be partitions of $\TND$, let $h:\cP_N\to \mathbb{R}$, and assume that $p>1$ and a strictly increasing Young function $\Phi$ are such that the composition\begin{equation}\label{eq: Young-p consistency} \Theta:\mathbb{R}\to \mathbb{R}, \quad u\mapsto (\Phi^{-1}(u))^p \quad \text{ is convex}.\end{equation} For a weighting $w^N$, let $\widetilde{h}:\widetilde{\cP}_N\to \mathbb{R}$ be given by $$ \widetilde{h}(\widetilde{B}):=\|h(B)\|_{\ell^p(B\in \cP_N|_{\widetilde{B}}, w^N)}.$$ Then it holds that \begin{equation}
	\label{eq: orlicz consistency} \|h(B)\|_{\ell_\Phi(B\in\cP_N, w^N)}\le \|\widetilde{h}(\widetilde{B})\|_{\ell_\Phi(\widetilde{B}\in\widetilde{\cP}_N,w^N)}.
\end{equation}
	
\end{lemma}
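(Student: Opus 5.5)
We prove the inequality \eqref{eq: orlicz consistency} directly from the definition \eqref{eq: orlicz} of the Orlicz norm as an infimum. Set $t:=\|\widetilde{h}(\widetilde{B})\|_{\ell_\Phi(\widetilde{B}\in\widetilde{\cP}_N,w^N)}$. By lower semicontinuity and monotonicity of $\Phi$, the infimum is attained whenever $t>0$, so
\[
\sum_{\widetilde{B}} \frac{w^N(\widetilde{B})}{w^N(\TND)}\Phi\bigl(\widetilde{h}(\widetilde{B})/t\bigr)\le 1 .
\]
If $t=0$, then every $\widetilde{h}(\widetilde{B})=0$, hence $h\equiv 0$ and there is nothing to prove. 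It therefore suffices to show that $t$ is admissible for $h$, which amounts to the pointwise-in-$\widetilde{B}$ Jensen-type estimate
\begin{equation*}
\sum_{B\in\cP_N|_{\widetilde{B}}} \frac{w^N(B)}{w^N(\widetilde{B})}\Phi\bigl(h(B)/t\bigr)\le \Phi\bigl(\widetilde{h}(\widetilde{B})/t\bigr).
\end{equation*}
Multiplying this by $w^N(\widetilde{B})/w^N(\TND)$ and summing over $\widetilde{B}\in\widetilde{\cP}_N$ then gives the admissibility condition, because $w^N(B)/w^N(\TND)=\bigl(w^N(\widetilde{B})/w^N(\TND)\bigr)\bigl(w^N(B)/w^N(\widetilde{B})\bigr)$ and the partition $\cP_N$ is glued from the restrictions $\cP_N|_{\widetilde{B}}$.

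For the local estimate, fix $\widetilde{B}$ and let $\mu$ denote the probability measure $w^N(B)/w^N(\widetilde{B})$ on $\cP_N|_{\widetilde{B}}$. Write $g(B):=|h(B)|/t$; since $\Phi$ is even we may work with $|h|$. Set $v(B):=\Phi(g(B))\ge 0$. Strict monotonicity of $\Phi$ on $[0,\infty)$ gives $g(B)=\Phi^{-1}(v(B))$, so $g(B)^p=\Theta(v(B))$. Convexity of $\Theta$ and Jensen's inequality then yield
\[
\Theta\bigl(\mathbb{E}_\mu v\bigr)\le \mathbb{E}_\mu \Theta(v)=\mathbb{E}_\mu g^p=\bigl(\widetilde{h}(\widetilde{B})/t\bigr)^p .
\]
Since $\Theta$ is increasing, taking $p$-th roots and applying $\Phi$ gives
\[
\mathbb{E}_\mu v=\Phi\bigl(\Theta(\mathbb{E}_\mu v)^{1/p}\bigr)\le \Phi\bigl(\widetilde{h}(\widetilde{B})/t\bigr),
\]
which is exactly the required local estimate.

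The main obstacle is the bookkeeping of the domains. We must check that $\Phi$ restricted to $[0,\infty)$ is a bijection onto $[0,\infty)$, which follows from $\Phi(0)=0$, strict increase, finiteness and strictness, so that $\Phi^{-1}$ and $\Theta$ are well defined on the range of $v$. We must also check that $\Theta$ is increasing, and the identity $\Phi(\Theta(s)^{1/p})=s$ for $s\ge 0$. The choice of $\Phi$ and $p$ making $\Theta$ convex is part of the hypothesis and need not be verified here.
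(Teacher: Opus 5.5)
Your proof is correct and follows essentially the same route as the paper. On each block $\widetilde{B}$ you apply Jensen's inequality for the convex $\Theta$ to $v=\Phi(h/t)$, which shows that any $t$ admissible for $\widetilde{h}$ is admissible for $h$; the only cosmetic difference is that you fix $t$ equal to the norm and justify attainment of the infimum (and the case $t=0$), whereas the paper compares the sets of admissible $t$ directly and so needs neither.
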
 The proof is given in Appendix \ref{subsec: orlicz consistency}.  The final lemma regarding these norms will allow us to interpolate between the nonlinear functionals $\|u^\alpha\|_{\ell_\Phi}, \|u^\alpha\|_{\ell^1}$, given only control of a lower order, linear norm $\|u\|_{\ell^1}$. \begin{lemma} \label{lemma: orlicz interpolation}
	Set $\cP_N=\TND$ and $w^N=1$, and let $\Phi$ be any strict Young function. Then for any $\delta>0$ and $b\ge 0$, there exists $z=z(\Phi, b, \delta)<\infty$ such that, for all $u:\TND\to [0,\infty)$ with $\|u\|_{\ell^1}\le b$, \begin{equation}
	\|u^\alpha\|_{\ell^1}\le \delta \|u^\alpha\|_{\ell_\Phi}+z.
	\end{equation}
\end{lemma}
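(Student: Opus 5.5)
Since $u\ge 0$ with $\|u\|_{\ell^1}\le b$, I will split $\TND$ according to whether $u$ is large or small at each site. For a threshold $M>0$ to be chosen, write
\begin{equation*}
\|u^\alpha\|_{\ell^1}=\frac1{N^d}\sum_{x} u(x)^\alpha 1(u(x)\le M)+\frac1{N^d}\sum_x u(x)^\alpha 1(u(x)>M).
\end{equation*}
The first term is at most $M^{\alpha-1}\|u\|_{\ell^1}\le M^{\alpha-1}b$, which will be absorbed into $z$. The whole proof therefore comes down to showing that the second term is at most $\delta\|u^\alpha\|_{\ell_\Phi}$ plus a constant, once $M$ is large.

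For the second term I use the Orlicz Hölder inequality with respect to the uniform probability measure on $\TND$:
\begin{equation*}
\frac1{N^d}\sum_x u(x)^\alpha 1(u(x)>M)\le 2\|u^\alpha\|_{\ell_\Phi}\,\|1(u>M)\|_{\ell_\Psi},
\end{equation*}
where $\|\cdot\|_{\ell_\Psi}$ is the Luxemburg norm of the dual function (the factor $2$ accounts for the usual Luxemburg/Orlicz norm conversion). Next I compute the norm of the indicator. The Markov inequality gives the measure estimate $m:=N^{-d}\#\{u>M\}\le b/M$. For an indicator of a set of measure $m$,
\begin{equation*}
\|1_A\|_{\ell_\Psi}=\frac{1}{\Psi^{-1}(1/m)}.
\end{equation*}
Because $\Phi$ is strict, $\Psi$ is finite everywhere, so $\Psi^{-1}(s)\to\infty$ as $s\to\infty$. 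Hence $\|1(u>M)\|_{\ell_\Psi}\le 1/\Psi^{-1}(M/b)\to 0$ as $M\to\infty$, uniformly in $N$ and in $u$.

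I therefore choose $M=M(\Phi,b,\delta)$ so large that $2/\Psi^{-1}(M/b)\le\delta$, and set $z:=M^{\alpha-1}b$. This gives $\|u^\alpha\|_{\ell^1}\le \delta\|u^\alpha\|_{\ell_\Phi}+z$, as required.

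The only point needing care is the duality step: the Hölder inequality between $\ell_\Phi$ and $\ell_\Psi$ for the normalised counting measure, and the formula for the $\Psi$-norm of an indicator. Both are standard, and the constants involved are independent of $N$. The argument uses strictness only through the finiteness of $\Psi$, which is exactly what makes the indicator norms vanish as the measure of the set tends to zero.
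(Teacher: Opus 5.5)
Your proposal is correct and is essentially the paper's proof. Like the paper, you split at a threshold and control the large-value part by the Orlicz H\"older inequality $\frac{1}{N^d}\sum|fg|\le 2\|f\|_{\ell_\Phi}\|g\|_{\ell_\Psi}$ together with $\|1(u>M)\|_{\ell_\Psi}\le 1/\Psi^{-1}(M/b)$ from Chebyshev, then choose $M$ so that $2/\Psi^{-1}(M/b)\le\delta$; the paper makes the explicit choice $k=b\Psi(2/\delta)$.

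The only cosmetic difference is that you bound the small-value part by $M^{\alpha-1}b$, using $\alpha\ge 1$, where the paper uses the cruder $M^\alpha$. The degenerate case $b=0$, which both arguments tacitly exclude, is trivial because then $u\equiv 0$.
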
 
 The proof of this lemma is given in Appendix \ref{subsec: orlicz interpolation}.

\paragraph{\emph{$\alpha$-averaging}} Finally, we define an averaging operation on configurations $\eta^N\in X_N$ which interacts well with the nonlinearity $u\mapsto u^\alpha$. For any weighting $w^N$ and any subset $\emptyset \not = B \subset \TND$ and configuration $\eta^N\in X_N$, we will write $\Lambda_{B, w_N} \eta^N$ for the ``$\alpha$-averaging" \begin{equation}
	\label{eq: alpha avg}\Lambda_{B, w^N} \eta^N:=\left(\frac{1}{w_N(B)}\sum_{x\in B}w^N(x)(\eta^N(x))^\alpha \right)^{1/\alpha}.
\end{equation} \paragraph{\emph{Notation}} When the weighting $w^N$ is fixed, it will be omitted from the notation for the $\alpha$-averaging in order to ease notation. 
\subsection{Stochastic Estimates}\label{sec: int basic} We now turn to the problem of estimating various expectations $\mathbb{E}_{f_N \Pi^N_a}[F]$ in terms of $\chi_N N^{2-d}\d_N(f_N)$, providing the basic ingredients corresponding to the steps in the outline proof in Section \ref{sec: sketch}.  \\ The first point is that we may freely restrict to shift-invariant density functions $f_N$. We define the set of Galilean-invariant functions $G_N(X_N)$ to be those functions $F:X_N\to \mathbb{R}$ such that $F\circ \tau_x=F$ for any translation $\tau_x$, and whenever $R: X_N\to X_N$ is a composition of rotations and reflections preserving the lattice, $F=F\circ R$. Similarly, given an invariant measure $\Pi^N_a$, we let $G_N(\Pi^N_a)$ denote those probability density functions $f_N$ with respect to $\Pi^N_a$ which are invariant under the same transformations. Finally, for any $b>0$, we denote $G_N(\Pi^N_a,b)$ those probability densities $f_N\in G_N(\Pi^N_a)$ which are supported on $X_{N,b}$. With this notation, the principle we use is as follows. \begin{lemma}\label{lemma: gallilean pdfs}[Restriction to Shift-Invariant Measures] For any $N, a>0, \lambda>0$ and $F\in G_N(X_N)$, it holds that \begin{equation}
	\sup_{f_N}\left\{\mathbb{E}_{f_N\Pi^N_a}[F]-\lambda \mathfrak{d}_N(f_N)\right\}=\sup_{f_N \in G_N(\Pi^N_a)}\left\{\mathbb{E}_{f_N\Pi^N_a}[F]-\lambda \mathfrak{d}_N(f_N)\right\}. 
\end{equation} Moreover, if, for some $b$, $\{F>0\}\subset X_{N,b}$, then the supremum on the right-hand side can be further restricted to $f_N\in G_N(\Pi^N_a,b)$.
	
\end{lemma}
Since this is a well-known principle consisting entirely of `soft' (qualitative) arguments, we omit the proof and refer to \cite[Proof of Lemma 4.1, Step 3]{kipnis1998scaling}.
%\begin{proof}[Sketch proof]
%	Assume first that $F\in G_N(X_N)$, and let $f_N$ be any competitor for the left-hand side. Setting $\bar{f}_N:=\frac{1}{2^d d!N^d}\sum_R\sum_x f_N\circ \tau_x\circ R$ produces $\bar{f}_N \in G_N(\Pi^N_a)$, the invariance of $F$ implies that the expectation of $F$ remains the same, and the convexity of the Dirichlet form $\d_N$ implies that the penalty term will decrease. This implies that the maximum is obtained on $G(\Pi^N_a)$ as claimed. If $F$ is supported on $X_{N,b}$, then an arbitrary competitor $f_N \in G_N(\Pi^N_a)$ may be decomposed as $f_N=pf^1_N+(1-p)f^2_N$, with $p\in [0,1]$, $f^1_N \in G_N(\Pi^N_a, b)$, and $f^2_N\in G_N(\Pi^N_a)$ supported on $X^N\setminus X_{N,b}$. Since $X_{N,b}, X\setminus X_{N,b}$ are invariant for the dynamics, the Dirichlet form decomposes as $$ \d_N(f_N)=p\d_N(f^1_N)+(1-p)\d_N(f^2_N)$$ whence \begin{equation*}\begin{split} \mathbb{E}_{f_N\Pi^N_a}[F]-\lambda \mathfrak{d}_N(f_N) &= p(\mathbb{E}_{f^1_N\Pi^N_a}[F]-\lambda \mathfrak{d}_N(f^1_N))-\lambda(1-p)\d_N(f^2_N) \\ & \le \mathbb{E}_{f^1_N\Pi^N_a}[F]-\lambda \mathfrak{d}_N(f^1_N).\end{split}\end{equation*}\end{proof}
The next step produces, for shift-invariant densities $f_N$, a canonical-paths type estimate.
\begin{lemma}\label{lemma: really useful observation} For any $f_N\in G_N(\Pi^N_a)$ and any lattice sites $x,y \in \TND$ at lattice distance $k\in [1, Nd]$, it holds that \begin{equation}
	\label{eq: ruo} \int_{X_N}\frac{(\eta^N(x))^\alpha}{\chi_N}\left[\sqrt{f(\eta^{N,x,y})}-\sqrt{f(\eta^N)}\right]^2 \Pi^N_a(d\eta^N) \le k^2N^{-d}\mathfrak{d}_N(f_N). 
\end{equation}\end{lemma}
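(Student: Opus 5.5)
The approach is a canonical-paths argument. Choose a shortest lattice path $x=x_0,x_1,\dots,x_k=y$, write the transfer of one particle from $x$ to $y$ as $k$ successive nearest-neighbour jumps, and use Cauchy--Schwarz along the path. Set $\eta_0:=\eta^N$ and $\eta_j:=\eta^N+\chi_N 1_{x_j}-\chi_N 1_x$ for $j\ge 1$. Then $\eta_k=\eta^{N,x,y}$, and $\eta_{j+1}=\eta_j^{x_j,x_{j+1}}$ is obtained from $\eta_j$ by moving the particle from $x_j$ to $x_{j+1}$. By telescoping and Cauchy--Schwarz,
\begin{equation*}
\left[\sqrt{f(\eta^{N,x,y})}-\sqrt{f(\eta^N)}\right]^2\le k\sum_{j=0}^{k-1}\left[\sqrt{f(\eta_{j+1})}-\sqrt{f(\eta_j)}\right]^2 .
\end{equation*}
Multiplying by $(\eta^N(x))^\alpha/\chi_N$ and integrating against $\Pi^N_a$ reduces the problem to bounding each of the $k$ terms separately.

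The key step is a change of measure that turns each term into a single-edge term of the Dirichlet form. For $j\ge1$, apply the integration by parts formula \eqref{eq: int by parts} at the site $x$ to remove the weight $(\eta^N(x))^\alpha$; this gives $a^\alpha\,\mathbb{E}[G(\eta^N+\chi_N1_x)]$, with $G$ the bracket evaluated at $\eta_j,\eta_{j+1}$. Under the shift $\eta^N\mapsto \eta^N+\chi_N 1_x$ we have $\eta_j\mapsto \eta^N+\chi_N 1_{x_j}=:\zeta$ and $\eta_{j+1}\mapsto \zeta^{x_j,x_{j+1}}$. Applying \eqref{eq: int by parts} in reverse at the site $x_j$ then gives
\begin{equation*}
\int \frac{(\zeta(x_j))^\alpha}{\chi_N}\left[\sqrt{f(\zeta^{x_j,x_{j+1}})}-\sqrt{f(\zeta)}\right]^2\Pi^N_a(d\zeta),
\end{equation*}
where $\zeta$ is relabelled as the integration variable. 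The $j=0$ term already has this form with $x_0=x$. Every term is therefore the edge contribution $D_{e}(f)$ of the Dirichlet form for the directed edge $e=(x_j,x_{j+1})$.

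It remains to use the invariance of $f_N\in G_N(\Pi^N_a)$. Translations, together with the rotations and reflections of the lattice, act transitively on directed nearest-neighbour edges. Both $\Pi^N_a$ and $f_N$ are invariant under these maps, so $D_e(f)$ takes the same value for every directed edge. The sum in \eqref{eq: dirichlet on laws} runs over pairs $x\sim y$ and carries the factor $\frac12$; reading it as a sum over ordered pairs, it contains $2dN^d$ directed edges. Hence each $D_e(f)=\mathfrak{d}_N(f_N)/(dN^d)\le N^{-d}\mathfrak{d}_N(f_N)$. Summing the $k$ terms and including the factor $k$ from Cauchy--Schwarz gives $k^2N^{-d}\mathfrak{d}_N(f_N)$, which is \eqref{eq: ruo}.

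The main obstacle is the change of measure. The intermediate configurations $\eta_j$ may have $\eta_j(x)=\eta^N(x)-\chi_N<0$, and the jump rate sits at the moving site $x_j$ rather than at $x$. The weight $(\eta^N(x))^\alpha$ vanishes when $\eta^N(x)=0$, so these terms contribute nothing, and the two applications of \eqref{eq: int by parts} transfer the weight correctly provided all functions are extended by zero outside $X_N$. I will check these boundary conventions explicitly. I will also confirm the edge-counting normalisation, which only affects the constant, and the constant is at most $1$ in either reading.
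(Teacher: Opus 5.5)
Your proposal is correct and is essentially the paper's argument: a shortest path, telescoping with Cauchy--Schwarz for the factor $k$, integration by parts to move the weight from $x$ to the intermediate site, and Galilean invariance to identify each step with a single-bond contribution to $\mathfrak{d}_N(f_N)$ of size at most $N^{-d}\mathfrak{d}_N(f_N)$. The only difference is the order of operations: the paper integrates by parts at $x$ first and then telescopes through $\eta^N+\chi_N 1_{x_j}$, which are genuine configurations in $X_N$, so the boundary issue you flag never arises. Your aside on normalisation is slightly off, though: if the sum in \eqref{eq: dirichlet on laws} is read over unordered bonds, the constant is $2/d$, which exceeds $1$ for $d=1$. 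So the ordered-pair reading you actually use, which matches the jump rates of the process, is the one needed.
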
 %This may be deduced from \eqref{eq: int by parts} in exactly the same way as, for example, \cite[Chapter 5, Section 5.5]{kipnis1998scaling}. 
\begin{proof}Thanks to the integration by parts property \eqref{eq: int by parts}, the left-hand side is equal to $$ \chi_N^{-1} a^\alpha \int_{X_N} \left[\sqrt{f(\eta^N+\chi_N 1_y)}-\sqrt{f(\eta^N+\chi_N 1_x)}\right]^2 \Pi^N_a(d\eta^N) $$ where, as in the definition of the process, $\eta^N+\chi_N 1_{x} \in X_N$ denotes the configuration with one particle added at the side $x\in \TND$. We now take a path $x_0=x, x_1, \dots, x_{k-1}, x_k=y, x_j\in\TND$ whose length is the lattice distance $k$, and write the difference as the telescoping sum \begin{equation}	\sqrt{f(\eta^N+\chi_N 1_y)}-\sqrt{f(\eta^N+\chi_N 1_x)}=\sum_{j=1}^k \sqrt{f(\eta^N+\chi_N 1_{x_j})}-\sqrt{f(\eta^N+\chi_N 1_{x_{j-1}})}.\end{equation} Squaring and using Cauchy-Schwarz, the integral is at most \begin{equation}
	k \chi_N^{-1} a^\alpha \sum_{j=1}^k\int_{X_N}\left[\sqrt{f(\eta^N+\chi_N 1_{x_j})}-\sqrt{f(\eta^N+\chi_N 1_{x_{j-1}})}\right]^2 \Pi^N_a(d\eta^N).
\end{equation} Since $f_N \in G(\Pi^N_a)$ by assumption and each $x_{j-1}, x_j$ is a bond of the lattice, each term of the sum produces the same contribution, and reversing the integration by parts produces an upper-bound for the left-hand side of $$ k^2  \int_{X_N} \frac{(\eta^N(x))^\alpha}{\chi_N}\left[\sqrt{f(\eta^{N, x, x_1})}-\sqrt{f(\eta^N)}\right]^2\Pi^N_a(d\eta^N).$$ The integrand is now the contribution to $\d_N(f_N)$ from the bond $x, x_1$, and since $f_N$ is translationally invariant, each of the $dN^d$ bonds of the lattice produces the same contribution. The remaining integral may therefore be estimated by $N^{-d}\d_N(f_N)$, which yields the claimed bound.   \end{proof}

We are finally ready to use Lemma \ref{lemma: really useful observation} to make a statement about the averages of $\eta^N$ on nearby lattice sites, or $\alpha$-averages of $\eta^N$ on neighbouring lattice boxes. This makes good the claim in the introduction that we can obtain pathwise regularity across suitably chosen mesoscopic scales from regularity in probability space at the level of the Dirichlet form, and will be at the heart of the proof of Theorem \ref{thrm: main integrability estimate}, c.f. \eqref{eq: preintro mesoscopic est} in the outline proof, and will additionally be used in the proof of Theorem \ref{thrm: supex} in Section \ref{sec: supex}. We recall the notation $\Lambda$ from \eqref{eq: alpha avg} for the $\alpha$-averaging operation.\begin{lemma} \label{lemma: reg by paths}[Pathwise Regularity from Regularity of Laws]
	There exists a constant $C$ such that, for all $N$, for all $f_N\in G_N(\Pi^N_a)$ and lattice points $x,y$ at lattice distance $k$, \begin{equation} \begin{split} \label{eq: reg by path conc}
		\mathbb{E}_{f_N\Pi^N_a}\left[\left((\eta^N(x))^{\alpha/2}-(\eta^N(y))^{\alpha/2}\right)^2\right]  & \le \chi_N k^2N^{-d}\mathfrak{d}_N(f_N) \\ &+C\chi_N^\delta\mathbb{E}_{f_N\Pi^N_a}(1+\|(\eta^N)^\alpha\|_{\ell^1(\TND)})\end{split}
	\end{equation} where the exponent $\delta$ is given by $ \delta:=\min(1, \frac\alpha2).$ Moreover, if $B, B'$ are lattice boxes of side length $l_N$, such that any $x\in B, y\in B'$ can be joined by a path of length $\widetilde{l}_N$, and $w^N$ is a weighting satisfying $\frac{\sup w^N}{\inf w^N}\le A$, then  \begin{equation} \begin{split} \label{eq: reg by path conc'}
		\mathbb{E}_{f_N\Pi^N_a}\left[\left((\Lambda_{B}\eta^N)^{\alpha/2}-(\Lambda_{B'}\eta^N)^{\alpha/2}\right)^2\right]  & \le \chi_N \widetilde{l}_N^2N^{-d}\mathfrak{d}_N(f_N) \\ &+CA^{1/2}l_N^{-d/2}\chi_N^\delta\mathbb{E}(1+\|(\eta^N)^\alpha\|_{\ell^1(\TND)}).\end{split}
	\end{equation}

\end{lemma}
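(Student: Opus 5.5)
The plan is to reduce \eqref{eq: reg by path conc} to Lemma \ref{lemma: really useful observation} through a Cauchy--Schwarz "change of measure" argument. The key observation is that a single jump of size $\chi_N$ changes $\eta^{\alpha/2}$ by an amount comparable to $\chi_N \eta^{\alpha/2-1}$.

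For the first statement, I would write $u=\eta^N(x)^{\alpha/2}$ and $v=\eta^N(y)^{\alpha/2}$. By the symmetry of $f_N$ under reflections and translations, swapping $x$ and $y$ leaves the law invariant. Hence
\[
\mathbb{E}_{f_N\Pi^N_a}[(u-v)^2]=2\,\mathbb{E}_{f_N\Pi^N_a}[u(u-v)].
\]
I would then rewrite $\mathbb{E}_{f_N\Pi^N_a}[\eta(x)^{\alpha}G]$ using the integration by parts \eqref{eq: int by parts}. This expresses the quantity $\mathbb{E}[(\eta(x)^\alpha-\eta(x)^{\alpha/2}\eta(y)^{\alpha/2})f]$ as a combination of terms in which the difference $f(\eta^{N,x,y})-f(\eta^N)$ is multiplied by a weight $\eta(x)^{\alpha/2}\cdots$, plus a commutator error. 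The error arises because the shifted configuration $\eta^{N,x,y}$ has $\eta(y)+\chi_N$ and $\eta(x)-\chi_N$ in place of $\eta(y)$ and $\eta(x)$.

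Next I would factor $f(\eta^{N,x,y})-f(\eta^N)=(\sqrt{f'}-\sqrt f)(\sqrt{f'}+\sqrt f)$ and apply Young's inequality. One piece is $\frac{\chi_N}{2}\int \frac{\eta(x)^\alpha}{\chi_N}(\sqrt{f'}-\sqrt f)^2\,d\Pi$, which Lemma \ref{lemma: really useful observation} bounds by $\frac{\chi_N}{2}k^2N^{-d}\mathfrak{d}_N(f_N)$. The other piece is $\tfrac12\mathbb{E}[(u-v)^2]$-type, which gets absorbed into the left-hand side; this absorption is why the constant in front of the Dirichlet form is exactly $\chi_N k^2N^{-d}$.

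The commutator errors are the differences $(\eta+\chi_N)^{\alpha/2}-\eta^{\alpha/2}$ and the analogue at $x$. These are bounded by $C\chi_N(1+\eta^{\alpha/2-1}+\eta^{\alpha/2})$ when $\alpha\ge2$, and by $\chi_N^{\alpha/2}$ (Hölder continuity) when $\alpha<2$. Multiplying by $\eta^{\alpha/2}$ and taking expectations gives $C\chi_N^\delta\,\mathbb{E}(1+\eta(x)^\alpha)$. Translation invariance then turns $\mathbb{E}\,\eta(x)^\alpha$ into $\mathbb{E}\|(\eta^N)^\alpha\|_{\ell^1}$.

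For \eqref{eq: reg by path conc'} I would repeat the argument with the random pair $(x,y)$ drawn from the product of the weighted uniform measures on $B$ and $B'$. The contribution of the Dirichlet form averages Lemma \ref{lemma: really useful observation} with $k\le\widetilde{l}_N$. The comparison uses the identity $\Lambda_B^\alpha=\sum_x \tfrac{w(x)}{w(B)}\eta(x)^\alpha$: integrating by parts transports a particle from a weighted-random $x\in B$ to a weighted-random $y\in B'$. The gain $l_N^{-d/2}$ comes from the fact that moving one particle of size $\chi_N$ within a box of $l_N^d$ sites changes $\Lambda_B^{\alpha}$ only by $O(\chi_N l_N^{-d}\eta^{\alpha-1})$. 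Converting this into a change in $\Lambda_B^{\alpha/2}$ requires dividing by $\Lambda_B^{\alpha/2}$. I would control that division by a Cauchy--Schwarz step, which costs a square root and yields $A^{1/2}l_N^{-d/2}$.

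I expect the main obstacle to be this last step. Specifically, the error must be controlled near $\Lambda_B\approx 0$ and for $\alpha<2$ without losing more than the stated power $\chi_N^\delta$ or the factor $A^{1/2}$. To handle it, I would first try splitting into the event where $\Lambda_B\ge \chi_N^{1/2}$ and its complement, where the latter contributes trivially small $\alpha/2$-powers.
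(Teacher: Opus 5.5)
Your skeleton matches the paper's: test the Dirichlet form against a weight adapted to $\eta^{\alpha/2}$, close with Lemma \ref{lemma: really useful observation}, and pay a commutator of order $\chi_N^\delta$. Your commutator bounds are exactly the paper's. The mechanism differs, though, and this breaks your claim about the constant.

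\textbf{The single-site estimate.} The paper does not linearise $f(\eta^{N,x,y})-f(\eta^N)$. It uses the pointwise AM--GM inequality (superscripts $N$ dropped)
\[
-f(\eta)(e^\theta-1)-f(\eta^{x,y})(e^{-\theta}-1)\le\left(\sqrt{f(\eta)}-\sqrt{f(\eta^{x,y})}\right)^2,\qquad e^\theta=\frac{(\eta(y)+\chi_N)^{\alpha/2}}{\eta(x)^{\alpha/2}},
\]
multiplied by $\eta(x)^\alpha\Pi^N_a(\eta)$ and summed. After the detailed-balance change of variables and the $x\leftrightarrow y$ symmetry, the left side is exactly $2\mathbb{E}_{f_N\Pi^N_a}[\eta(x)^\alpha-\eta(x)^{\alpha/2}(\eta(y)+\chi_N)^{\alpha/2}]$. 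The right side is $\chi_N$ times the left side of \eqref{eq: ruo}, so nothing has to be absorbed.

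In your route, write $f'=f_N(\eta^{N,x,y})$. You must represent $\mathbb{E}[(u-v)^2]$ as $\int\eta(x)^\alpha\phi\,(f-f')\,d\Pi^N_a$. Ignoring $\chi_N$-shifts, this integral equals $\mathbb{E}_{f_N\Pi^N_a}[(u^2-v^2)\phi]$, so the weight must be $\phi\approx(u-v)/(u+v)$. You should state this choice, since it is what makes the remainder absorbable at all.

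The Young remainder $\frac12\int\eta(x)^\alpha\phi^2(\sqrt f+\sqrt{f'})^2\,d\Pi^N_a$ can only be returned to $f_N\Pi^N_a$-expectations via $(\sqrt f+\sqrt{f'})^2\le 2(f+f')$. That gives the bound $\mathbb{E}[(u-v)^2\,(u^2+v^2)/(u+v)^2]$. The ratio equals $\frac12$ only when $u=v$ and tends to $1$ as $v/u\to0$. Optimising over $\phi$ and the Young parameter, this chain gives at best
\[
\mathbb{E}[(u-v)^2]\le 2\chi_Nk^2N^{-d}\mathfrak{d}_N(f_N)+\dots
\]
That is harmless for every later use in the paper. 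It is not the stated inequality, however, and your claim that absorption yields exactly $\chi_Nk^2N^{-d}$ is wrong. The exact AM--GM inequality avoids the loss that linearisation incurs when $u/v$ is far from $1$.

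\textbf{The box estimate.} The paper chooses the weight at the level of averages,
\[
e^\theta=\frac{(\Lambda_{B'}\eta^{N,x,y})^{\alpha/2}}{(\Lambda_B\eta^N)^{\alpha/2}},
\]
that is, the receiving box after the jump over the sending box before it. This makes $\theta$ change sign under the reverse jump, which is what the change of variables needs.

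Averaging over $x\in B$ with weight $w^N$ turns $\eta(x)^\alpha$ into $(\Lambda_B\eta^N)^\alpha$, which cancels the denominator. So you never divide by a small $\Lambda_B$, and the splitting on $\{\Lambda_B\ge\chi_N^{1/2}\}$ is unnecessary.

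The only error is $\Lambda_{B'}\eta^{N,x,y}$ versus $\Lambda_{B'}\eta^N$. The paper handles it by $\sqrt{a+b}\le\sqrt a+\sqrt b$ together with $w^N(y)/w^N(B')\le A\,l_N^{-d}$. This is the square root you predicted, and it produces the factor $A^{1/2}l_N^{-d/2}$.

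Be aware that this step really gives $((\eta(y)+\chi_N)^\alpha-\eta(y)^\alpha)^{1/2}\approx(\alpha\chi_N)^{1/2}$ for $\eta(y)$ of order one. That is a factor $\chi_N^{1/2}$, not $\chi_N^\delta$. Lemma \ref{lemma: 1sr} and the constructions after it work for any fixed $\delta>0$, so you need not try to recover $\delta=\min(1,\alpha/2)$ in the box case.
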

\begin{proof}
	First, we observe that for any $\theta\in \mathbb{R}$, points $x,y$ at a lattice distance $k$ and $\eta^N\in X_N$, it holds that \begin{equation} \label{eq: basic inequality for dirichlet}
		-f_N(\eta^N)(e^\theta-1)-f_N(\eta^{N,x,y})(e^{-\theta}-1)\le \left(\sqrt{f_{N}(\eta^N)}-\sqrt{f_{N}(\eta^{N,x,y})}\right)^2.
	\end{equation} For the first claim (\ref{eq: reg by path conc}), we take $\theta=\theta(\eta^N, \eta^{N,x,y})$ in the previous inequality to be \begin{equation}
		\theta:=\frac{\alpha}{2}\left(\log (\eta^N(y)+\chi_N)-\log \eta^N(x)\right)
	\end{equation} and multiply both sides by $$\Pi^N_a(\eta^N){(\eta^N(x))^\alpha}=\Pi^{N}_a(\eta^{N,x,y}){(\eta^{N,x,y}(y))^\alpha}.$$ By changing variables in the second term, involving $f_N(\eta^{N,x,y})$, and observing that $\theta(\eta^{N,x,y},\eta^N)=-\theta(\eta^N, \eta^{N,x,y})$, both terms on the left-hand side give the same contribution, so we may consider only the first term. Repeating the calculations in \cite[Lemma 6.1]{gess2023rescaled}, the contribution of the first term is bounded below by \begin{equation}
		\begin{split}
			& -\int_{X_N}f_N(\eta^N)(\eta^N(y)^{\alpha/2}-\eta^N(x)^{\alpha/2})(\eta^N(x))^{\alpha/2}\Pi^N_a(d\eta^N)  \\ & \hspace{2cm} -c\int_{X_N}f_N(\eta^{N})\chi_N^{\min(1,\alpha/2)}(1+(\eta^N(x))^\alpha+ (\eta^N(y))^{\alpha})\Pi^N_a(d\eta^N).
		\end{split}
	\end{equation}  Meanwhile, the summation over $\eta^N$ on the right-hand side is $\chi_N$ times the left-hand side of (\ref{eq: ruo}), which is therefore controlled by $\chi_Nk^2N^{-d}\d_N(f_{N})$, and the proof of (\ref{eq: reg by path conc}) is complete. \\ \\ The argument for (\ref{eq: reg by path conc'}) is similar. Let us first fix $x\in B, y\in B'$, and note that the lattice distance between them is at most $\widetilde{l}_N$ by hypothesis. Returning to (\ref{eq: basic inequality for dirichlet}), we now take \begin{equation}
		\theta=\frac\alpha2\left(\log(\Lambda_{B'}\eta^{N,x,y}) -\log (\Lambda_B\eta^N)\right) 
	\end{equation} so that, multiplying by $(\eta^N(x))^\alpha \Pi^N_a(d\eta^N)$ and integrating the left-hand side contributes \begin{equation}
		-2\int_{X_N}f_N(\eta^N)(\eta^N(x))^\alpha \left(\frac{(\Lambda_{B'}\eta^{N,x,y})^{\alpha/2}}{(\Lambda_B \eta^N)^{\alpha/2}}-1\right)\Pi^N_a(d\eta^N).
	\end{equation} In the numerator, we use that \begin{equation}
		(\Lambda_{B'}\eta^{N,x,y})^\alpha = (\Lambda_{B'}\eta^N)^\alpha + \frac{w^N(y)}{w^N(B')}\left((\eta^N(y)+\chi_N)^\alpha-(\eta^N(y))^\alpha\right)
	\end{equation} and the elementary inequality $\sqrt{a+b}\le \sqrt{a}+\sqrt{b}, a, b\ge 0$, to bound the whole contribution below by \begin{equation} \label{eq: box averaging}\hspace{-1.5cm}
		-2\int_{X_N}f_N(\eta^N)(\eta^N(x))^\alpha\left(\frac{(\Lambda_{B'}\eta^N)^{\alpha/2}+(w^N(y)/w^N(B'))^{1/2}\chi_N^\delta(1+(\eta^N(y))^{\alpha/2})}{(\Lambda_B \eta^N)^{\alpha/2}}-1\right)\Pi^N_a(d\eta^N).
	\end{equation} Meanwhile, the integral of the right-hand side is bounded by, using Lemma \ref{lemma: really useful observation}, $\chi_N\widetilde{l}_N^2N^{-d}\mathfrak{d}_N(f_N)$, which is therefore an upper bound for the expression in (\ref{eq: box averaging}). \\ \\ We now average this inequality over $x\in B, y\in B'$ with respect to the weighting $w^N$. The averaging in $x\in B$ replaces the occurrences of $(\eta^N(x))^\alpha, (\eta^N(y))^\alpha$ by $(\Lambda_B \eta^N)^\alpha, (\Lambda_{B'} \eta^N)^\alpha$ respectively, while the bound $\sup w^N\le A \inf w^N$ implies that $w^N(y)\le Al_N^{-d}w^N(B')$. All together, for some constant $C$,  \begin{equation}\begin{split}
		&-2\mathbb{E}_{f_N\Pi^N_a}\left[(\Lambda_B\eta^N)^{\alpha/2}\left((\Lambda_{B'}\eta^N)^{\alpha/2}-(\Lambda_B\eta^N)^{\alpha/2}\right)\right]\\& \hspace{1cm} -CA^{1/2}\chi_N^\delta l_N^{-d/2}\mathbb{E}_{f_N\Pi^N_a}\left[(1+(\Lambda_B\eta^N)^\alpha+(\Lambda_{B'}\eta^N)^\alpha\right] \le \widetilde{l}_N^2N^{-d}\mathfrak{d}_N(f_N)\end{split}
	\end{equation} which may be post-processed into the claimed inequality as in the previous case.   \end{proof}

\subsection{Integrability Estimate via Scale Decomposition}  \label{sec: int main} We now arrive at the proof of Proposition \ref{prop: int dirichlet form}, and hence Theorem \ref{thrm: main integrability estimate}, following the argument set out in Section \ref{sec: sketch}. For ease of readability, the proof is divided into the following Lemma, which gives one step of an iterative argument, and the subsequent assembly into an iterative scheme. \begin{lemma}[One Step of Multiscale Argument via Dirichlet form]\label{lemma: 1sr} Let $1\le l_N \le \widetilde{l}_N\le N$, and let $\cP_N\in \mathcal{Q}(N,l_N), \widetilde{\cP}_N\in \mathcal{Q}(N,\widetilde{l}_N)$ with $\cP_N\prec \widetilde{\cP}_N$, and let $w^N$ be a weight function satisfying the bound $\frac{\sup w^N}{\inf w^N}\le A$. Assume that, for all $B, B'\in \cP_N, w^N(B)=w^N(B')$, and similarly, for all $\widetilde{B}, \widetilde{B}'\in\widetilde{\cP_N}$, $w^N(\widetilde{B})=w^N(\widetilde{B}')$. For $p=p(d)>1$ given by Proposition \ref{prop: discrete Sobolev}, let $\Phi$ be a Young function such that \eqref{eq: Young-p consistency} holds for this $p$, and let $\Psi$ be the dual function. Then, for any an absolute constant $C=C(d,\Phi)$, $\delta=\delta(\alpha)>0$, for any $\lambda\in(0,1]$ and for any translationally invariant $f_N\in G_N(\Pi^N_a)$, it holds that \begin{equation}
		\label{eq: 1sr conclusion} \begin{split}\mathbb{E}_{f_N\Pi^N_a}\left[\|(\Lambda_{B}\eta^N)^\alpha\|_{\ell_\Phi(B\in \cP_N,w^N)} \right] & \le (1+\lambda)\mathbb{E}_{f_N\Pi^N_a}\left[\|(\Lambda_{\widetilde{B}_N}\eta^N)^\alpha\|_{\ell_\Phi(\widetilde{B}\in \widetilde{\cP}_N,w^N)} \right] \\[1ex] & \hspace{-2.5cm}+ C\lambda^{-1}A^{1/2}l_N^{-d/2}\left(\chi_N^{\delta/2} \Psi^{-1}(N^d)\right)(\chi_N^{\delta/2}\widetilde{l}_N^2 l_N^{-2}) \mathbb{E}_{f_N\Pi^N_a}\left[1+\|(\eta^N)^\alpha\|_{\ell^1(\TND)} \right] \\ &\hspace{-2.5cm}+C(\lambda^{-1}\widetilde{l}_NN^{-1})\left(\widetilde{l}_NN^{-1} \Psi^{-1}(N^d\widetilde{l}_N^{-d})\right)\chi_N N^{2-d}\mathfrak{d}_N(f_N).\end{split}
	\end{equation}
	%FINDME
		
	\end{lemma}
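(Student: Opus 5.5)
The plan is to make Steps 1–2 of the sketch in Section \ref{sec: sketch} quantitative, with the Orlicz norm in place of $\ell^p$. First, by Lemma \ref{lemma: orlicz consistency} applied to $h(B)=(\Lambda_B\eta^N)^\alpha$, it suffices to bound $\mathbb{E}\|\widetilde h(\widetilde B)\|_{\ell_\Phi(\widetilde B\in\widetilde\cP_N,w^N)}$, where $\widetilde h(\widetilde B)=\|(\Lambda_B\eta^N)^\alpha\|_{\ell^p(B\in\cP_N|_{\widetilde B},w^N)}$. Because all blocks $B$ of $\cP_N$ carry equal weight, this is an unweighted $\ell^p$ norm over the block lattice $\cP_N|_{\widetilde B}$. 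That lattice has side lengths between $\widetilde l_N/(2l_N)$ and $2\widetilde l_N/l_N$ (up to constants), so Proposition \ref{prop: discrete Sobolev} applies to $f(B)=(\Lambda_B\eta^N)^{\alpha/2}$. Note also that $\|f\|_{\ell^2(B\in\cP_N|_{\widetilde B})}^2=(\Lambda_{\widetilde B}\eta^N)^\alpha$ exactly, since $\alpha$-averaging is consistent under equal block weights. This gives, for each $\widetilde B$,
\[
\widetilde h(\widetilde B)\le(1+\lambda)(\Lambda_{\widetilde B}\eta^N)^\alpha+C\lambda^{-1}\Big(\frac{l_N}{\widetilde l_N}\Big)^{d-2}\sum_{B\sim B'\subset\widetilde B}\big((\Lambda_B\eta^N)^{\alpha/2}-(\Lambda_{B'}\eta^N)^{\alpha/2}\big)^2=:(1+\lambda)(\Lambda_{\widetilde B}\eta^N)^\alpha+\lambda^{-1}D(\widetilde B).
\]
The triangle inequality for the Orlicz norm then splits the quantity into the first term of \eqref{eq: 1sr conclusion} plus $\lambda^{-1}\mathbb{E}\|D\|_{\ell_\Phi(\widetilde\cP_N)}$.

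Next, convert the Orlicz norm of the nonnegative $D$ to an $\ell^1$ norm by a crude duality bound. On a space with $M$ atoms of equal mass $1/M$, Hölder with the dual Luxemburg norm gives $\|D\|_{\ell_\Phi}\le \|D\|_{\ell^\infty}\,\Phi^{-1}(1)^{-1}\lesssim \sum_{\widetilde B}D(\widetilde B)$, and more precisely $\|D\|_{\ell_\Phi}\le C\Psi^{-1}(M)\|D\|_{\ell^1}$; the key input is that the norm of the indicator of one atom equals $1/\Phi^{-1}(M)\sim \Psi^{-1}(M)/M$. I will use this with $M=|\widetilde\cP_N|\sim N^d\widetilde l_N^{-d}$ for the Dirichlet term, and with the cruder $M\le N^d$ for the error term. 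Taking expectations, translation invariance of $f_N$ makes $\mathbb{E}D(\widetilde B)$ essentially independent of $\widetilde B$, up to the finitely many block shapes. Lemma \ref{lemma: reg by paths} \eqref{eq: reg by path conc'} then applies to each adjacent pair, with path length $\lesssim \widetilde l_N$ and box size $\ge l_N$. The number of adjacent pairs in $\widetilde B$ is $\sim(\widetilde l_N/l_N)^d$, so each pair comes with the prefactor $(l_N/\widetilde l_N)^{d-2}$ and the sum contributes a total factor $(\widetilde l_N/l_N)^2$.

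For the Dirichlet term, the contribution per $\widetilde B$ is $(\widetilde l_N/l_N)^2\chi_N\widetilde l_N^2N^{-d}\d_N(f_N)$. The pair estimate can be sharpened to path length $l_N$-scale for adjacent blocks, replacing $\widetilde l_N^2$ by $l_N^2$, so this becomes $\widetilde l_N^2\chi_NN^{-d}\d_N$. Multiplied by $\Psi^{-1}(N^d\widetilde l_N^{-d})/|\widetilde\cP_N|$-type normalisation, this gives $(\widetilde l_N/N)^2\Psi^{-1}(N^d\widetilde l_N^{-d})\chi_NN^{2-d}\d_N$, as claimed. For the error term, the contribution is $CA^{1/2}l_N^{-d/2}\chi_N^\delta(\widetilde l_N/l_N)^2\mathbb{E}(1+\|\eta^\alpha\|_{\ell^1})$ times $\Psi^{-1}(\cdot)\le\Psi^{-1}(N^d)$, which after splitting $\chi_N^\delta=\chi_N^{\delta/2}\chi_N^{\delta/2}$ matches the middle line.

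The main obstacle will be the bookkeeping in the Orlicz-to-$\ell^1$ step. The norm of $D$ must be controlled in expectation even though $D$ is random and the Luxemburg norm is not linear. My plan there is to bound pointwise $\|D\|_{\ell_\Phi}\le \frac{\Psi^{-1}(M)}{M}\sum_{\widetilde B}D(\widetilde B)$ by convexity, then take expectations linearly. Getting the $\widetilde l_N$ powers exactly right also requires care: adjacent blocks inside $\widetilde B$ need only paths of length $O(l_N)$ rather than $O(\widetilde l_N)$ in Lemma \ref{lemma: reg by paths}.
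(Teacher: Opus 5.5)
Your proposal is correct and follows essentially the same route as the paper. You pass to $\ell^p$ averages inside each $\widetilde B$ via Lemma \ref{lemma: orlicz consistency}, apply the discrete Sobolev inequality blockwise (equal block weights make the $\ell^2$ term exactly $(\Lambda_{\widetilde B}\eta^N)^\alpha$), use the triangle inequality with $\|D\|_{\ell_\Phi}\le \Psi^{-1}(|\widetilde\cP_N|)\|D\|_{\ell^1}$, and finish with Lemma \ref{lemma: reg by paths} at path length $O(l_N)$ for adjacent blocks. The only difference is cosmetic: the paper first proves an expectation bound on the positive part $\Delta_{\widetilde B}$ and then globalises, whereas you bound pointwise and take expectations at the end, which is equivalent.
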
 \begin{remark} 
	%Let us motivate this estimate. By convexity, the averaging over smaller boxes $\cP_N\prec \widetilde{\cP}_N$ will (typically) behave disfavourably compared to the coarser averaging. This lemma allows us to pass from one scale $\widetilde{l}_N$ to another $l_N\ll \widetilde{l}_N$, adding an error term in the second line multiple of the Dirichlet form in the third line, and the key idea will be to apply this result on a sequence of scales $1\sim l^0_N\ll l^1_N\dots \ll l^k_N=N$. 
	This is the full version of the estimate \eqref{eq: one step toy}, now in sufficient generality to cover the argument without the simplifying hypotheses. As discussed in the introduction, an additional parameter $\lambda\in (0,1)$ has been introduced, in order to keep the prefactor of the iterative term bounded if the number of iterations is allowed to diverge. The various prefactors in the second and third lines are bracketed corresponding to the ways that the various compensations will be leveraged. \end{remark} 
	\begin{proof}
		We divide the proof into a probabilistic step, arguing from the Dirichlet form $\mathfrak{d}_N(f_N)$, and an analytic step. 
		\paragraph{\textbf{Step 1: One Block of $\widetilde{\cP}_N$ via Dirichlet Form}} Let us fix $\widetilde{B}\in \widetilde{\cP}_N$. In this step, we will show a \emph{local} version of (\ref{eq: 1sr conclusion}). Let us define \begin{equation}
			\label{eq: Delta Btilde}\Delta_{\widetilde{B}}(\eta^N):=\left[\left\|(\Lambda_{B}\eta^N)^\alpha \right\|_{\ell^p(B\in\cP_N|_{\widetilde{B}},w^N)}-(1+\lambda)(\Lambda_{\widetilde{B}}\eta^N)^\alpha\right]_+
		\end{equation} with $_+$ denoting the nonnegative part of a real number. We will show, in this step, that for any $f\in G_N(\Pi^N_a)$, \begin{equation}
			\label{eq: 1sr local} \begin{split}
		 \mathbb{E}_{f_N\Pi^N_a}[\Delta_{\widetilde{B}}(\eta^N)] \le &   C\lambda^{-1}\widetilde{l}_N^2N^{-d} \mathfrak{d}_N(f_N)  \\&  +CA^{1/2}\lambda^{-1}{\widetilde{l}_N^2}{l_N^{-2-d/2}}\chi_N^\delta \mathbb{E}_{f_N\Pi^N_a}\left[1+\|(\eta^N)^\alpha\|_{\ell^1(\TND)}\right]\end{split}
		\end{equation} where $\delta>0$ is the same exponent as in Lemma \ref{lemma: reg by paths}. Let us fix $f_N\in G_N(\Pi^N_a)$. For any neighbouring boxes $B, B'\subset \widetilde{B}$, $B, B'\in P_N$, we recall from Lemma \ref{lemma: reg by paths}, it holds that \begin{equation} \begin{split}
			\hspace{-0.5cm}\mathbb{E}_{f_N\Pi^N_a}\left[\left((\Lambda_B \eta^N)^{\alpha/2}-(\Lambda_{B'}\eta^N)^{\alpha/2}\right)^2\right] & \le cl_N^2N^{-d}\mathfrak{d}_N(f_N) \\ & +CA^{1/2}l_N^{-d/2}\chi_N^\delta\mathbb{E}_{f_N\Pi^N_a}(1+\|(\eta^N)^\alpha\|_{\ell^1(\TND)})		\end{split}
		\end{equation}  where we have used translation invariance of $f_N$ to simplify the error term. Averaging over pairs of adjacent $B, B' \in \cP_N, B, B'\subset \widetilde{B}$, we find \begin{equation}\begin{split} \label{eq: lsr local 1}
			& \mathbb{E}_{f_N\Pi^N_a}\left[\left(\frac{l_N}{\widetilde{l}_N}\right)^{d-2}\sum_{B\sim B'}\left((\Lambda_B \eta^N)^{\alpha/2}-(\Lambda_{B'}\eta^N)^{\alpha/2}\right)^2\right] \\ & \hspace{1cm}\le C\widetilde{l}_N^2 N^{-d} \mathfrak{d}_N(f_N)+CA^{1/2}l_N^{-d/2}\left(\frac{\widetilde{l}_N}{l_N}\right)^2\chi_N^\delta\mathbb{E}_{f_N\Pi^N_a}\left[1+\|(\eta^N)^\alpha\|_{\ell^1(\TND)}\right].
		\end{split}\end{equation} Since the graph formed by the adjacency of boxes $B\subset \widetilde{B}, B\in \cP_N$ is a lattice, with side lengths $\sim \widetilde{l}_N/l_N$, we now use the discrete Sobolev-Gagliardo-Nirenberg inequality in Proposition \ref{prop: discrete Sobolev}, for some $p>1$, $C=C(d,p)<\infty$, and all $\lambda\in (0,1)$, \begin{equation} \label{eq: discrete sobolev}
			\begin{split}
					\hspace{-1cm}\left\|(\Lambda_{B}\eta^N)^\alpha \right\|_{\ell^p(B\in  \cP_N|_{\widetilde{B}},w^N)} \le C\lambda^{-1}\left(\frac{l_N}{\widetilde{l}_N}\right)^{d-2}\sum_{B\sim B'}\left((\Lambda_B \eta^N)^{\alpha/2}-(\Lambda_{B'}\eta^N)^{\alpha/2}\right)^2 \\+(1+\lambda)\frac{1}{\#(\cP_N|_{\widetilde{B}})}\sum_{B\in \cP_N: B\subset \widetilde{B}} (\Lambda_B\eta^N)^{\alpha}.
			\end{split}
		\end{equation}Since every $B\in \cP_N$ receives equal weight, the final term is exactly $(\Lambda_{\widetilde{B}}\eta^N)^\alpha$. Taking the second term onto the left-hand side, we see that \begin{equation} \label{eq: discrete sobolev'}
			\begin{split}
					\hspace{-1cm}\Delta_{\widetilde{B}}(\eta^N) \le C\lambda^{-1}\left(\frac{l_N}{\widetilde{l}_N}\right)^{d-2}\sum_{B\sim B'}\left((\Lambda_B \eta^N)^{\alpha/2}-(\Lambda_{B'}\eta^N)^{\alpha/2}\right)^2.
			\end{split} %FINDME
		\end{equation} Taking expectations of (\ref{eq: discrete sobolev'}) and combining with (\ref{eq: lsr local 1}) produces (\ref{eq: 1sr local}). \\\paragraph{\textbf{Step 2. From One Block to Global Estimate.}} We now argue why (\ref{eq: 1sr local}) implies the conclusion (\ref{eq: 1sr conclusion}). First, thanks to Lemma \ref{lemma: orlicz consistency}, \begin{equation} \begin{split} \|(\Lambda_{B}\eta^N)^\alpha \|_{\ell_\Phi(B\in\cP_N,w^N)} & \le \left\|\|(\Lambda_{B}\eta^N)^\alpha\|_{\ell^p(B\in \cP_N|_{\widetilde{B}},w^N)}\right\|_{\ell_\Phi(\widetilde{B}\in \widetilde{\cP}_N,w^N)} \\ & \le \left\|\Delta_{\widetilde B}(\eta^N)+(1+\lambda)(\Lambda_{\widetilde{B}}\eta^N)^\alpha\right\|_{\ell_\Phi(\widetilde{B}\in \widetilde{\cP}_N,w^N)}.\end{split}\end{equation} Using the triangle inequality, the second term is one of the terms appearing in the right-hand side of (\ref{eq: 1sr conclusion}). For the term involving $\Delta_{\widetilde{B}}(\eta^N)$, we use the equivalence of the Orlicz and $\ell^1$ norms on finite sets to estimate $$\left\|\Delta_{\widetilde{B}}(\eta^N)\right\|_{\ell_\Phi(\widetilde{B}\in \widetilde{\cP}_N,w^N)}\le C\Psi^{-1}(N^d\widetilde{l}_N^{-d})\left\|\Delta_{\widetilde{B}_N}(\eta^N)\right\|_{\ell^1(\widetilde{B}\in\widetilde{\cP}_N,w^N)} $$ where, as in the statement of the theorem, $\Psi$ is the dual Young function to $\Phi$, and $C$ is an absolute constant. We therefore take expectations to find \begin{equation} \begin{split} 
			\mathbb{E}_{f_N\Pi^N_a}\left\|\Delta_{\widetilde{B}}(\eta^N)\right\|_{\ell_\Phi(\widetilde{B}\in\widetilde{\cP}_N,w^N)} \le C \Psi^{-1}(N^d\widetilde{l}_N^{-d})\sum_{\widetilde{B}\in \widetilde{\cP}_N} \frac{w^N(\widetilde{B})}{w^N(\TND)}\mathbb{E}_{f_N\Pi^N_a}[\Delta_{\widetilde{B}}(\eta^N)].   \end{split}
		\end{equation} Each term in the sum is bounded by Step 1, and the summation over $\widetilde{\cP}_N$  yields the claim (\ref{eq: 1sr conclusion}). \end{proof} In order to apply the previous result iteratively, we need to specify a suitable Young function $\Phi$, and a suitable sequence of partitions and weights. These are dealt with by the next two results, whose proofs we defer until Appendix \ref{subsec: orlicz constructions}. \begin{proposition}\label{prop: Young function}
			Fix any sequence $\chi_N\to 0$ and $\delta>0$. Then there exists a strict Young function $\Phi$ satisfying the hypotheses of Lemma \ref{lemma: 1sr}, and whose dual function $\Psi$ additionally satisfies \begin{equation} \label{eq: choice of Phi} \sup_{u\ge 1}u^{-1/d}\Psi^{-1}(u) <\infty; \qquad \sup_N \chi_N^{\delta/2} \Psi^{-1}(N^d)<\infty.\end{equation}		\end{proposition}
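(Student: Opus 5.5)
Rather than building $\Phi$ directly, I will construct the dual function $\Psi$ and define $\Phi$ as its convex dual via \eqref{eq: convex dual}. Three properties are required.
\begin{enumerate}[label=(\roman*)]
\item $\Psi$ is a strict Young function, so that $\Phi$ is one as well.
\item The two growth bounds in \eqref{eq: choice of Phi} hold.
\item The composition $\Theta(u)=(\Phi^{-1}(u))^p$ is convex, with $p=p(d)>1$ from Proposition \ref{prop: discrete Sobolev}.
\end{enumerate}
Both bounds in (ii) only ask that $\Psi^{-1}$ grow slowly, that is, that $\Psi$ grow fast. The first bound says $\Psi(v)\gtrsim v^d$ for large $v$. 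The second says $\Psi^{-1}(N^d)\lesssim \chi_N^{-\delta/2}$.

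\textbf{Construction of $\Psi$.} Set $\epsilon_N:=\sup_{M\ge N}\chi_M$, which is nonincreasing and tends to $0$. Choose an increasing function $r:[1,\infty)\to[1,\infty)$ with $r(u)\to\infty$ so slowly that
\[
r(N^d)\le \min\bigl(\epsilon_N^{-\delta/2},\,N\bigr).
\]
This is possible because the right-hand side tends to infinity. I then want $\Psi^{-1}(u)\le r(u)$ together with $\Psi^{-1}(u)\le C u^{1/d}$.

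A natural choice is to let $\Psi^{-1}$ be concave, increasing, with $\Psi^{-1}(0)=0$, and sandwiched as
\[
\Psi^{-1}(u)\le \min\bigl(r(u),\,u^{1/d}\bigr)\quad\text{for } u\ge 1,
\]
while still tending to infinity. Concretely, I take $\Psi^{-1}$ to be the least concave majorant of a suitable slowly increasing step function lying below $\min(r,u^{1/d})$. If needed, I modify the steps so that the majorant stays below $\min(r,u^{1/d})$. This is achievable by placing the breakpoints very sparsely and keeping slopes decreasing. Then:
\begin{itemize}
\item $\Psi$, the inverse of a concave increasing function, is convex and increasing.
\item Since $\Psi^{-1}(u)/u\to0$, we get $\Psi(v)/v\to\infty$, so $\Psi$ is strict and hence $\Phi$ is strict.
\end{itemize}
Both bounds in \eqref{eq: choice of Phi} follow immediately, since $\chi_N^{\delta/2}\le\epsilon_N^{\delta/2}$.

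\textbf{Convexity condition (iii).} This is the step I expect to be the main obstacle. It asks that $\Phi^{-1}$ be ``at least $1/p$-convex''. Equivalently, $\Phi(s)=\Theta^{-1}(s^p)$ with $\Theta$ convex, meaning $\Phi$ grows no faster than a $p$-th power in a suitable sense. Since $\Psi$ grows fast, its dual $\Phi$ grows slowly, barely superlinearly, so the condition is compatible with the rest.

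To make it rigorous, I will instead build $\Phi$ first in the form $\Phi(s)=\Theta^{-1}(s^p)$, where $\Theta^{-1}$ is concave, increasing and with $\Theta^{-1}(t)\ge t^{1/p}$. Then $\Theta$ is convex by construction. I then check that such a $\Phi$ is convex and strict, and that its dual is fast enough.

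A clean route is to take $\Phi(s)=s\,\ell(s)$ for $s\ge1$, with $\ell$ increasing to $\infty$ extremely slowly, piecewise linear in $\log s$ with tiny slopes. Because $\ell$ is slowly varying, the following hold:
\begin{itemize}
\item $\Phi$ is convex.
\item $\Phi^{-1}(u)\approx u/\ell(u)$, so $(\Phi^{-1})^p\approx u^p\ell(u)^{-p}$. Its second derivative is positive once the log-derivative of $\ell$ is small compared with $p-1$. This is arranged by choosing slopes below $(p-1)/(2p)$ and smoothing.
\item The dual satisfies $\Psi(v)\approx \exp(\ell^{-1}(v))$ up to harmless factors. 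Hence $\Psi^{-1}(u)\approx\ell(\log u)$ roughly, and the growth constraints in (ii) reduce to making $\ell$ grow slower than $\min(r(e^{\,\cdot}),\,e^{\cdot/d})$.
\end{itemize}

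Finally, I fix the constants near the origin. I adjust $\Phi$ on $[0,1]$ to be quadratic, $\Phi(s)=cs^2$. This keeps $\Phi$ convex with $\Phi(0)=0$ and keeps $\Theta$ convex near $0$, since $(u/c)^{p/2}$ needs $p\ge2$. If instead $p<2$, I use $s^{q}$ with $q\le p$ near $0$, matched at $s=1$. Proposition \ref{prop: discrete Sobolev} allows $p$ to be decreased if necessary, so I will take $p$ close to $1$ and use the form $s^{p}$ near zero. This makes $\Theta$ linear there.
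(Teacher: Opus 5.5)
Your proposal is correct in substance, but it follows a different route from the paper.

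\textbf{The paper's route.} It handles the two requirements separately.
\begin{itemize}
\item It first builds $\Psi=|x|^d+\Psi_2+x^2$ explicitly, with $\Psi_2$ convex and lying above the points $(\chi_N^{-\delta/2},N^d)$, and takes $\Phi$ to be the dual of $\Psi$.
\item It then applies a general regularisation, an ODE time change of $\Phi'$. This turns any strict $\Phi$ into some $\widehat\Phi\le\Phi+C$ for which $(\widehat\Phi^{-1})^p$ is convex.
\item Finally it observes that $\widehat\Phi\le\Phi+C$ gives $\widehat\Psi^{-1}(u)\le\Psi^{-1}(u+C)\le\Psi^{-1}(u)+C'$, so the growth bounds survive.
\end{itemize}

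\textbf{Your route.} You notice that both requirements push $\Phi$ towards being barely superlinear, and you meet them at once with the explicit family $\Phi(s)=s\,\ell(s)$, $\ell(s)=L(\log s)$.
\begin{itemize}
\item Since $\Theta''$ has the sign of $(p-1)\Phi'(s)-s\Phi''(s)$, the $p$-condition becomes $L'+L''\le (p-1)(L+L')$ in the variable $t=\log s$.
\item The growth of the dual then follows from a one-line Legendre bound.
\end{itemize}
Your version is more explicit and avoids the ODE. The paper's version is modular and works starting from an arbitrary strict $\Phi$.

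Incidentally, the criterion you use is the right one. The reformulation displayed in the paper's Step 2 has $(\widehat\Phi')^2$ where $\widehat\Phi'$ should appear, and it is not scale invariant. For $\Phi(x)=x^2$ and $p=3/2$ it holds for all $x\ge 1$, although $\Theta(u)=u^{3/4}$ is concave. So the function $G$ there should contain $F$ rather than $F^2$.

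Three points need tightening.
\begin{itemize}
\item \textbf{The dual computation.} You write $\ell$ where the profile $L$ is meant. Testing the supremum defining $\Psi$ at $s=e^t$ gives $\Psi(1+L(t))\ge e^t$. Hence $\Psi^{-1}(u)\le 1+L(\log u)=1+\ell(u)$ for $u\ge 1$.

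So the requirement is $\ell\lesssim r$, i.e.\ $L(t)\lesssim r(e^t)$. Read literally, your condition $\ell(t)\lesssim r(e^t)$ is off by one logarithm and does not give the second growth bound.

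With the correct form, the first bound is automatic. The second reduces to $L(d\log N)\lesssim \epsilon_N^{-\delta/2}$. This holds if $L$ lies below the nondecreasing, unbounded function $t\mapsto \epsilon_{\lceil e^{t/d}\rceil}^{-\delta/2}$, and then $\chi_N^{\delta/2}\Psi^{-1}(N^d)\le \chi_N^{\delta/2}+(\chi_N/\epsilon_N)^{\delta/2}$.

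\item \textbf{Convexity of $\Phi$.} This does not follow from slow variation alone. Since $\Phi'(s)=L(t)+L'(t)$, you need $L'+L''\ge 0$.

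When $\chi_N$ decays slowly, $L$ must be sublinear. A piecewise linear $L$ then has kinks where the slope drops, and at each of them $\Phi'$ jumps down. The smoothing must therefore satisfy $|L''|\le L'$, i.e.\ the slopes may decrease at most exponentially in $t$.

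\item \textbf{Matching at $s=1$.} The two pieces must match to first order. An upward jump of $\Phi'$ is a concave kink of $\Phi^{-1}$, and hence of $\Theta$.

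Take $\Phi=L(0)s^p$ on $[0,1]$, which makes $\Theta$ linear there, so there is no need to lower $p$. Matching then forces $L'(0)=(p-1)L(0)$.

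If $L$ is afterwards concave with $L''\ge -L'$, then for $t>0$ you get $0\le L'+L''\le L'\le L'(0)\le (p-1)L$. So both the convexity of $\Phi$ and the $p$-condition hold.

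Your first attempted construction, via a concave majorant for $\Psi^{-1}$, is not needed; only $r$ (or directly $\epsilon_N$) is used.
\end{itemize}
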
 
			
			\begin{lemma}\label{lemma: p and w} Fix $\chi_N\to 0$ and $\delta>0$. Then, for every $N$, there exists a sequence of length scales $l^k_N$, partitions $\cP^1_N\prec \cP^2_N\prec \dots \prec \cP^{K_N}_N$, with $\cP^k_N\in \mathcal{Q}(N, l^k_N)$, and a weighting $w^N$ such that: \begin{enumerate}
				\item For every $1\le k\le K_N$, for every $B_k\in \cP^k_N$, $w^N(B_k)=|\cP^k_N|^{-1}w^N(\TND)$, so each block of $\cP^k_N$ receives the same weight; \item  	The initial scale $l^1_N=1, l^{K_N}_N\ge \frac{N}{2}$, and $\cP^{K_N}_N$ is the trivial partition $\cP^{K_N}_N=\{\TND\}$, and\begin{equation}
					\label{eq: consistency of scales} \sup_N \max_{k<K_N} \chi_N^{\delta/2} \left(\frac{l^{k+1}_N}{l^k_N}\right)^2 < \infty; \qquad \min_{k<K_N} \min\left(\chi_N, \frac12\right)^{\delta/4} \left(\frac{l^{k+1}_N}{l^k_N}\right)^2 \ge \frac14; 
				\end{equation}\item For some absolute constant $A$, for all $N$, $\frac{\sup w^N}{\inf w^N} \le A$.		\end{enumerate}   \end{lemma}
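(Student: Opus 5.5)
We construct everything one coordinate at a time: it suffices to build, on the one-dimensional torus $\{0,\dots,N-1\}$, a nested sequence of interval partitions $\cP^{1,i}_N\prec\dots\prec\cP^{K_N,i}_N$ (the same for each coordinate $i$) together with a weight $v^N$ on $\{0,\dots,N-1\}$. We then take products $\cP^k_N=\{\prod_i A_i\}$ and the product weighting $w^N(x)=\prod_i v^N(x_i)$. Equal weight of blocks and the ratio bound for $w^N$ follow from the one-dimensional statements, with $A=A_1^d$.

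\textbf{Choice of scales.} Set $\epsilon_N:=\min(\chi_N,\frac12)$, so $\epsilon_N^{-\delta/4}\ge 2^{\delta/4}>1$. Let $r_N:=\max(2,\lfloor \epsilon_N^{-\delta/8}\rfloor)$, an integer ratio. It satisfies $\chi_N^{\delta/2}r_N^2\le \chi_N^{\delta/2}\epsilon_N^{-\delta/4}$, which is bounded (it is at most $1$ once $\chi_N\le\frac12$, and bounded for the finitely many other $N$). It also satisfies $\epsilon_N^{\delta/4}r_N^2\ge \epsilon_N^{\delta/4}\cdot\frac14\epsilon_N^{-\delta/4}=\frac14$ when $\lfloor\cdot\rfloor\ge 2$, using $\lfloor u\rfloor\ge u/2$ for $u\ge1$. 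When $r_N=2$ because the floor is below $2$, we have $\epsilon_N^{-\delta/4}<4$ and $r_N^2=4$, so the lower bound holds trivially. We take nominal scales $l^k_N:=r_N^{k-1}$ for $k<K_N$, where $K_N$ is the first index with $r_N^{K_N-1}\ge N/2$. At the last step we instead set $\cP^{K_N}=\{\TND\}$ and $l^{K_N}_N=N$. The final ratio $N/l^{K_N-1}_N$ lies between roughly $1$ and $2r_N$, so both inequalities in \eqref{eq: consistency of scales} survive up to constants. If the final ratio is too small for the lower bound, we merge the last two levels; the remaining ratio then lies in $[r_N,2r_N^2]$, and the upper bound still holds after adjusting constants, or after choosing $r_N$ with exponent $\delta/16$ to leave room.

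\textbf{Nested partitions with unequal intervals.} Since $r_N^{k}$ does not divide $N$, we build the partitions top-down. Start from the whole circle. At level $k$, split each interval of length $L\in[l^k,2l^k)$ into $\lfloor L/l^{k-1}\rfloor$ consecutive intervals whose lengths differ by at most one. Each child then has length in $[l^{k-1},2l^{k-1})$, and each parent has between $r_N$ and $2r_N$ children. This gives $\cP^k\in\mathcal Q(N,l^k_N)$, but the number of children varies from parent to parent.

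\textbf{Weights: the main obstacle.} Condition (1) requires every block of $\cP^k$ to carry the same total weight, even though blocks have different cardinalities and different numbers of children. We define the weight top-down. The root gets mass $1$. A parent's mass is split equally among its children, and the mass of a level-$1$ block (a single site) is its weight $v^N(x)$. Then all blocks at level $k$ have equal weight only if every parent at level $k$ has the same number of children. To enforce this, we choose the number of children $m_k$ to be constant on each level. Concretely, we define the scales via exact counts: let $n_k$ be the number of intervals at level $k$, with $n_{K}=1$ and $n_{k}=m_k n_{k+1}$, where $m_k\in\{r_N,\dots,2r_N\}$ is chosen so that $n_1=\prod m_k$ is close to $N$. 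Then we distribute the $N$ sites into $n_1'$ atoms, merging sites so that atoms have size $1$ or $2$.

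A cleaner alternative avoids all of this. Let every parent at a given level have the same number $m_k$ of children, while letting the children have unequal lengths (differing by at most a factor of $2$). Equal block weights then hold automatically by the equal-splitting rule. The site weight $v^N(x)$ equals the root mass divided by $\prod_k m_k$ and then by the size of the level-$1$ atom. Since atoms have size $1$ or $2$ (or bounded size), this gives $\sup v/\inf v\le 2$. The real work is the arithmetic of choosing the $m_k\in[r_N,2r_N]$ so that the interval lengths stay within $[l^k,2l^k]$. We will handle it by greedy rounding from the top level down. Uniformity of all constants in $N$ is the point to check carefully.
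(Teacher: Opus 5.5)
Your structural observation is the right one and is the same as the paper's. Condition (1) forces all blocks of a given level to have the same number $m_k$ of children, and the weight $w^N(x)=1/\#(\text{atom containing }x)$ then equalises the blocks of every level with $\sup w^N/\inf w^N\le 2^d$. But the proposal stops exactly where the content of the lemma lies, so there is a genuine gap.

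Since $l^1_N=1$ forces level-one atoms of size $1$ or $2$, and a level-$k$ block is a union of $\prod_{j<k}m_j$ atoms, you need integers $m_1,\dots,m_{K_N-1}$ with $N/2\le\prod_j m_j\le N$, each in the window allowed by \eqref{eq: consistency of scales}. The scales are then forced to be $l^k_N=\prod_{j<k}m_j$. They are not the nominal $r_N^{k-1}$ for which you checked \eqref{eq: consistency of scales}, and your merging step also refers only to those nominal scales. You defer the choice of the $m_k$ to ``greedy rounding''. With $m_k$ confined to $[r_N,2r_N]$ it is in general impossible: if $4r_N<N<r_N^2$, one factor gives at most $2r_N<N/2$, while two already give at least $r_N^2>N$. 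This happens for all large $N$ when, e.g., $\chi_N=N^{-\gamma}$ with $\gamma\delta/8=3/4$. Similarly, letting siblings differ ``by at most a factor of $2$'' does not keep level-$k$ lengths inside $[l^k_N,2l^k_N]$, because such discrepancies compound over many levels. What controls the lengths is that every level-$k$ block consists of exactly $l^k_N$ atoms of size $1$ or $2$.

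The repair is short once the window is widened. Because $\chi_N^{\delta/2}r_N^4$ is bounded, the upper bound in \eqref{eq: consistency of scales} tolerates ratios as large as $r_N^2$. For $N\ge r_N$, take $K_N-1=\lfloor\log_{r_N}N\rfloor$, $m_k=r_N$ for $k<K_N-1$, and $m_{K_N-1}=\lfloor N/r_N^{K_N-2}\rfloor\in[r_N,r_N^2)$. Then $\prod_j m_j\in(N-r_N^{K_N-2},N]\subset(N/2,N]$. Pairing up $N-\prod_j m_j$ disjoint adjacent couples of sites gives exactly $\prod_j m_j$ atoms, which are then grouped. The paper does the same with powers of two. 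It fixes $M=2^{\lfloor\log_2N\rfloor}$ atoms by pairing $N-M$ couples of sites, and groups them $2^q$ at a time with $2^q\approx\min(\chi_N,\frac12)^{-\delta/8}$, the last step absorbing the remainder, so divisibility is automatic.

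Note finally a limitation of the statement itself. When $\frac12\min(\chi_N,\frac12)^{-\delta/8}>N$, no construction can meet the second inequality in \eqref{eq: consistency of scales}, since the ratios multiply to $l^{K_N}_N\le N$. This defeats both your construction and the paper's, where then $q>\lfloor\log_2N\rfloor$. Only a uniform geometric lower bound on the ratios is used in the proof of Proposition \ref{prop: int dirichlet form}, so that regime should be handled by a single step, $K_N=2$.
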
 
				
				\begin{proof}[Proof of Proposition \ref{prop: int dirichlet form}] Fix $\chi_N\to 0$ as in the statement, let $p>1$ be given by Proposition \ref{prop: discrete Sobolev}, and $\delta=\delta(\alpha)=\min(1,\frac\alpha2)$. With these choices, let us take $\Phi$ to be the Young function produced by Proposition \ref{prop: Young function}. We will now prove that, for this choice of $\Phi$, for any $a, b>0$, and for some sufficiently large $\vartheta\in (0,\infty)$, the estimate \eqref{eq: main integrability conclusion'}, which asserts that \begin{equation}
					\label{eq: main proof conclusion'} \limsup_N\sup_{f_N}\left\{\mathbb{E}_{f_N\Pi^N_a}\left[\|(\eta^N)^\alpha\|_{\ell_\Phi}1(\eta^N\in X_{N,b})\right]-\vartheta \chi_N N^{2-d}\mathfrak{d}_N(f_N)\right\}<\infty.
				\end{equation}  Thanks to Lemma \ref{lemma: gallilean pdfs}, we can restrict to $f_N\in G_N(\Pi^N_a,b)$. We proceed by using Lemma \ref{lemma: 1sr} repeatedly on the partitions $\cP^1_N\prec \cP^2_N\prec\dots \prec \cP^{K_N}_N$ and weight $w^N$ constructed by Lemma \ref{lemma: p and w}, and set $\cP^0_N:=\TND$.  \\ \paragraph{\emph{Step 1. From $\cP^0_N$ to $\cP^1_N$.}} In order to start the iteration, we must first convert the `standard' partition $\TND$ and weight $w=1$, for which the Orlicz norm in the Theorem is defined, into the partition $\cP^1_N$ by microscopic boxes and weight $w^N$. We will show that for some absolute constant $c$ and any $\eta^N\in X^N$, \begin{equation} \label{eq: compare Orlicz small scales}
					\|(\eta^N)^\alpha\|_{\ell_\Phi} \le c\|(\Lambda_{B}\eta^N)^\alpha\|_{\ell_\Phi(B\in\cP^1_N,w^N)}
				\end{equation} where we recall that the left-hand side is defined with respect to the uniform weight $w=1$, while the right-hand side is defined with the weight $w^N$ from Lemma \ref{lemma: p and w}. For any $x\in \TND$, let $B\in \cP^1_N$ be the element containing $x$; since $l^1_N$ and $\frac{\sup w^N}{\inf w^N}$ are bounded uniformly in $N$, $$ (\eta^N(x))^\alpha \le c(\Lambda_{B}\eta^N)^\alpha  $$ for some $c$ independent of $N$. Hence, for any $t\ge 0$, \begin{equation}
					\frac{1}{N^d}\sum_{x\in \TND} \Phi\left(\frac{(\eta^N(x))^\alpha}{t}\right) \le \frac{w^N(\TND)}{N^d}\sum_{B\in \cP^1_N} \frac{|B|}{w^N(B)} \frac{w^N(B)}{w^N(\TND)} \Phi\left(\frac{c(\Lambda_B\eta^N)^\alpha}{t}\right).
				\end{equation} The two factors $\frac{w^N(\TND)}{N^d}\frac{|B|}{w^N(B)}$ are bounded above by $\frac{\sup w^N}{\inf w^N}\le A$, and by convexity the right-hand side is now at most $$ \frac{1}{N^d}\sum_{x\in \TND}\Phi\left(\frac{(\eta^N(x))^\alpha}{t}\right) \le \sum_{B\in \cP^1_N} \frac{w^N(B)}{w^N(\TND)} \Phi\left(\frac{cA(\Lambda_B\eta^N)^\alpha}{t}\right).$$ Setting $t:=cA\|(\Lambda_{B}\eta^N)^\alpha\|_{\ell_\Phi(B\in\cP^1_N,w^N)}$ makes the right-hand side at most 1, and \eqref{eq: compare Orlicz small scales} is proven for a new choice of $c$. \\ \paragraph{\emph{Step 2. Iterative Scheme}} We now apply the iterative scheme for which Lemma \ref{lemma: 1sr} gives us the basic step. Let us fix $f_N\in G_N(\Pi^N_a)$, and to shorten notation let us write $$ Z_k(f_N):=\mathbb{E}_{f_N\Pi^N_a}\left\|(\Lambda_{B}\eta^N)^\alpha\right\|_{\ell_\Phi(B\in\cP^k_N,w^N)}, \qquad 1\le k\le K_N.$$ We now write the telescoping sum, for a sequence of $\lambda_k$ to be chosen later,  \begin{equation} \begin{split}
					\label{eq: telescoping sum} Z_1(f_N) & = \sum_{k=1}^{K_N-1} \left(\prod_{1\le m<k}(1+\lambda_m)\right)\left(Z_k(f_N)-(1+\lambda_k)Z_{k+1}(f_N)\right) \\ & \qquad \qquad +\left(\prod_{1\le m\le K_N} (1+\lambda_m) \right)Z_{K_N}(f_N).\end{split}
				\end{equation}  Since each $\cP^k_N \prec \cP^{k+1}_N$ satisfies the hypotheses of Lemma \ref{lemma: 1sr}, we may apply the result to each term of the summation to find, for an $N$-independent constant $C$, \begin{equation}\label{eq: form of bound}
					\begin{split} Z_1(f_N) & \le C\mathcal{T}^1_N(1+\EE_{f_N\Pi^N_a}\|(\eta^N)^\alpha\|_{\ell^1}) + C\mathcal{T}^2_N (\chi_N N^{2-d} \mathfrak{d}_N(f_N)),\end{split} 
				\end{equation} where $\mathcal{T}^1_N$ collects the different error terms and the final term of the iteration \begin{equation}\begin{split}  \label{eq: T1} \mathcal{T}^1_N:&= \sum_{k=1}^{K_N-1} \left(\prod_{1\le m<k}(1+\lambda_m)\right) \lambda_k^{-1}(l^k_N)^{-d/2}\{\chi_N^{\delta/2}\Psi^{-1}(N^d)\}\left[\chi_N^{\delta/2}\left(\frac{l^{k+1}_N}{l^k_N}\right)^2\right] \\ & \qquad \qquad + \left(\prod_{1\le m\le K_N} (1+\lambda_m)\right), \end{split} \end{equation} while $\mathcal{T}^2_N$ collects all the terms appearing in front of the Dirichlet form: \begin{equation}
					\label{eq: T2} \mathcal{T}^2_N:= \sum_{k=1}^{K_N-1}\left(\prod_{1\le m<k}(1+\lambda_m)\right)(\lambda_k^{-1}l^{k+1}_NN^{-1})\{l^{k+1}_NN^{-1} \Psi^{-1}(N^d(l^{k+1}_N)^{-d})\}.
				\end{equation} Here, we have made two simplifications. Firstly, since factors relating to $A$ are bounded uniformly in $N$, they are absorbed into the constant $C$. Secondly, by definition of the $\alpha$-averaging, the final term of the iteration $Z_{K_N}(f)$ is comparable to $\EE_{f_N\Pi^N_a}\|(\eta^N)^\alpha\|_{l^1}$. \\ \\ We now simplify the previous two expressions \eqref{eq: T1} - \eqref{eq: T2} using the construction of $\Phi$ and $(l^k_N)_{1\le k< K_N}$. In both expressions, the factor in $\{\cdot\}$ involving $\Psi^{-1}$ is bounded, uniformly in $N$, by \eqref{eq: choice of Phi}, since $\frac{N}{l^{k+1}_N}\ge 1$. Meanwhile, the construction \eqref{eq: consistency of scales} ensures that the factor $[\cdot ]$ appearing in $\mathcal{T}^1_N$ is bounded, uniformly in $k, N$. Absorbing these $N$-independent factors into the constant $C$, we have a bound of the same form \eqref{eq: form of bound} with the new terms \begin{equation}
					\label{eq: T1'}  \mathcal{T}^1_N:= \sum_{k=1}^{K_N-1} \left(\prod_{1\le m<k}(1+\lambda_m)\right) \lambda_k^{-1}(l^k_N)^{-d/2}+ \left(\prod_{1\le m\le K_N} (1+\lambda_m)\right)
				\end{equation} and \begin{equation}
					\label{eq: T2'} \mathcal{T}^2_N:= \sum_{k=1}^{K_N-1}\left(\prod_{1\le m<k}(1+\lambda_m)\right)(\lambda_k^{-1}l^{k+1}_NN^{-1}).
				\end{equation} \paragraph{\emph{Step 3. Choice of $\lambda_k$}} We must now specify the sequence $\lambda_k\in (0,1], 1\le k<K_N$ in the previous step. We will now show that choosing \begin{equation} \label{eq: magic choice of lambda} \lambda_k:=\max\left((l_N^k)^{-d/4}, \left(\frac{l^{k+1}_N}{N}\right)^{1/2}\right) \end{equation} produces upper bounds on both expressions \eqref{eq: T1'}-\eqref{eq: T2'} which are uniform in $N$. It is immediate that $0<\lambda_k\le 1$, and since $l^{k+1}_N\ge \frac12 \min(\frac12,\chi_N)^{-\delta/8} l^k_N$, we get the two bounds $$(l^k_N)^{-d/4} \le \min\left(\frac14,\frac{\chi_N}{2}\right)^{\delta d k/32},\qquad  \left(\frac{l^{k}_N}{N}\right)^{1/2} \le \min\left(\frac12, \chi_N\right)^{\delta (K_N-k)/16}$$ which is summable in $k$, uniformly in $N$. As a result, the sums $\sum_{k=1}^{K_N-1} (l^k_N)^{-d/4}$,  $\sum_{k=1}^{K_N-1}(l^{k+1}_N/N)^{1/2}$ are bounded, uniformly in $N$. From the definition of $\lambda_k$, there exists a $N$-independent $C$ such that $\sum_{k=1}^{K_N-1} \lambda_k \le \log C$, and for the products $$ \max_{1\le k<K_N}\prod_{1\le m<k} (1+\lambda_m)\le C $$ is similarly bounded. These bounds also imply $N$-uniform bounds on $$ \sum_{k=1}^{K_N-1} \lambda_k^{-1}(l^k_N)^{-d/2}; \qquad \sum_{k=1}^{K_N-1} \lambda_k^{-1}\left(\frac{l^{k+1}_N}{N}\right)$$ and the step is complete. \\ \paragraph{\emph{Step 4. Assembly \& Interpolation}} Assembling the previous steps, for some $N$-independent $C_1, C_2, C_3$, \begin{equation}
					\begin{split} \mathbb{E}_{f_N\Pi^N_a}\left[\|(\eta^N)^\alpha\|_{\ell_\Phi}\right] & \le C_1 \mathbb{E}_{f_N\Pi^N_a}\left[\|(\Lambda_{\cP^1_N}\eta^N)^\alpha\|_{\ell\Phi(B\in \cP^1_N,w^N)}\right] \\ & \le C_2\mathbb{E}_{f_N \Pi^N_a}\left[1+\|(\eta^N)^\alpha\|_{\ell^1}\right] +C_3\chi_N N^{2-d}\mathfrak{d}(f_N)  \end{split}
				\end{equation} where the first line follows from Step 1, and the second by applying Steps 2-3 to \eqref{eq: form of bound}. In order to interpolate to the desired bound \eqref{eq: main integrability conclusion}, we must make use of the support condition $f_N\Pi^N_a(X_{N,b})=1$ which we assumed at the start of the proof. Since $\Phi$ is strict, we may use Lemma \ref{lemma: orlicz interpolation} to find $z$, depending only $\Phi, b$ and the absolute constant $C_2$ in the last expression, such that for $f_N\Pi^N_a$-almost all $\eta^N$, \begin{equation}
				\|(\eta^N)^\alpha\|_{\ell^1}\le 	\frac{1}{2C_2}\|(\eta^N)^\alpha\|_{\ell_\Phi}+\frac{z}{C_2}
				\end{equation} which allows us to eliminate the $\ell^1$-term from the right hand side to find \begin{equation}
					\begin{split} \mathbb{E}_{f_N\Pi^N_a}\left[\|(\eta^N)^\alpha\|_{\ell_\Phi}\right]  & \le 2(C_2+z +C_3\chi_N N^{2-d}\mathfrak{d}(f_N)).  \end{split}
				\end{equation} If we now take $\vartheta:=2C_3$, we have proven the claim \eqref{eq: main proof conclusion'} when the supremum is restricted to $f\in G_N(\Pi^N_a,b)$. As argued above, the supremum is the same with and without this restriction, and the theorem is complete.  \end{proof}

\section{Exponential Tightness} \label{sec: ET}
We now prove Proposition \ref{prop: tightness}. \begin{proof}[Proof of Proposition \ref{prop: tightness}] We will prove that \begin{equation}\label{eq: ET 1} \limsup_{M\to \infty}\limsup_{N\to \infty} \frac{\chi_N}{N^d}\log \mathbb{P}_{\Pi^N_a}\left(\exists t\le T: \cH(\eta^N_t)>M\right)=-\infty\end{equation} and, for all $\epsilon>0$, there exists $\delta>0$ such that \begin{equation} \label{eq: ET 2}\limsup_{\delta\to 0} \limsup_{N\to \infty}\frac{\chi_N}{N^d}\log \mathbb{P}_{\Pi^N_a}\left(\exists s, t\in [0,T]: 0< s-t<\delta, \int_s^t \|(\eta^N_u)^\alpha\|_{\ell^1} du>\epsilon\right)=-\infty. \end{equation} Once these are in hand, the same argument as in \cite[Lemma 6.1]{gess2023rescaled} shows that $\eta^N_\bullet$ fulfil conditions of the general abstract result \cite[Theorems 4.1, 4.4]{feng2006large} and the result is complete.
	\paragraph{\emph{Step 1. Entropy Bound}} We first prove \eqref{eq: ET 1}. From \cite[Lemma 4.4]{gess2023rescaled} it holds, for all $0\le \beta<\alpha$, \begin{equation}
		\label{eq: entropy static} \limsup_{N\to \infty} \frac{\chi_N}{N^d}\log \mathbb{E}_{\Pi^N_a}\left[\exp\left(\frac{N^d}{\chi_N}\beta\cH(\eta^N_0)\right)\right]<\infty.
	\end{equation} Fix $0\le z<\infty$. Thanks to \eqref{eq: entropy static}, we may choose $b<\infty$ such that \begin{equation} \begin{split}
		\label{eq: choose b} \limsup_N \frac{\chi_N}{N^d}\log \mathbb{P}_{\Pi^N_a}\left(\exists t\le T: \eta^N_t\not \in X_{N,b}\right)&=\limsup_N \frac{\chi_N}{N^d}\log \mathbb{P}_{\Pi^N_a}\left(\eta^N_0\not \in X_{N,b}\right) \\ & <-z. \end{split}
	\end{equation} On the event $\eta^N_0\in X_{N,b}$, it follows that, almost surely, $\eta^N_t(x)\le N^d b$ for all $x\in \TND$ and all $t\ge 0$ thanks to the conservation of mass, whence the total jump rate is at most \begin{equation} \label{eq: crude bound on rate} \frac{N^{2-d}}{\chi_N}\sum_{x\in \TND} (\eta^N(x))^\alpha \le \frac{N^{2+(\alpha-1)d}b^\alpha}{\chi_N}.\end{equation} Let now $\tau^N_k, k\ge 1$ be the jump times of $\eta^N_t$ and let $Z^N(t)$ be the process counting the jumps. Thanks to the finiteness of the maximum rate, $Z^N(t)$ is dominated by a Poisson process of intensity given by \eqref{eq: crude bound on rate}, and in particular there exists a $c=c(z)$,  and $\gamma>0$ such that \begin{equation}\label{eq: total no of jumps}
		\limsup_{N\to \infty} \frac{\chi_N}{N^d} \log \PP_{\Pi^N_a}\left(Z^N(T)> c\left(\frac{N^d}{\chi_N}\right)^\gamma, \eta^N_0\in X_{N,b}\right) \le -z. 
	\end{equation} Finally, on the event $Z^N(T)\le c(N^d/\chi_N)^\gamma$, the values taken by $\eta^N_t$ on $t\in [0,T]$ are a subset of $\{\eta^N_{\tau^N_k}: k\le  c(N^d/\chi_N)^\gamma\}$, and for any $0<\beta<\alpha$, \begin{equation}\begin{split} 
		&\mathbb{E}_{\Pi^N_a}\left[1(Z^N(t)\le c(N^d/\chi_N)^\gamma) \exp\left(\frac{N^d \beta \sup_{t\le T}\cH(\eta^N_t)}{\chi_N}\right)\right] \\ & \le \EE_{\Pi^N_a}\left[\sum_{k=1}^{c(N^d/\chi_N)^\gamma}\exp\left(\frac{N^d \beta \cH(\eta^N_{\tau^N_k})}{\chi_N}\right)\right] = c\left(\frac{N^d}{\chi_N}\right)^\gamma \EE_{\Pi^N_a}\left[\exp\left(\frac{N^d \beta \cH(\eta^N_{0})}{\chi_N}\right)\right]
	\end{split} \end{equation} where, in the last line, we use that each $\eta^N_{\tau^N_k}\sim \Pi^N_a$ has the equilibrium distribution. Thanks to \eqref{eq: entropy static}, the final factor is bounded above by $e^{\theta N^d/\chi_N}$, for some $\theta=\theta(a, \beta)$, and taking the logarithm and passing to the limit, the prefactor $(N^d/\chi_N)^\gamma$ does not contribute. By a Chebychev estimate, setting $M=\theta+z$, \begin{equation} \label{eq: third event ET}
		\limsup_N \frac{\chi_N}{N^d}\log \PP_{\Pi^N_a}\left(\sup_{t\le T} \cH(\eta^N_t)>M, Z^N(T)\le c(N^d/\chi_N)^\gamma \right) \le -z.
	\end{equation} The event $\{\sup_{t\le T} \cH(\eta^N_t)>M\}$ is now the union of the events \eqref{eq: choose b}, \eqref{eq: total no of jumps}, \eqref{eq: third event ET}, and the conclusion follows. \paragraph{\emph{Step 2. Time Integrability}} We now prove \eqref{eq: ET 2} with the help of Theorem \ref{thrm: main integrability estimate}. We again fix $z<\infty$, and choose $b<\infty$ as in \eqref{eq: choose b}. For this value of $b$, we choose $\beta>0$ and a strict Young function $\Phi$ according to Theorem \ref{thrm: main integrability estimate}, so that \begin{equation}
		\theta:=\sup_N \frac{\chi_N}{N^d}\log \mathbb{E}_{\Pi^N_a}\left[\exp\left(\frac{\beta N^d}{\chi_N}\int_0^T \|(\eta^N_s)^\alpha\|_{\ell_\Phi} 1(\eta^N_s\in X_{N,b}) ds\right)\right]<\infty
	\end{equation} is finite. Using Lemma \ref{lemma: orlicz interpolation}, we can choose $\lambda<\infty$ such that, for all $t\ge 0$, \begin{equation}
		\|(\eta^N_s)^\alpha\|_{\ell^1}1(\eta^N_s\in X_{N,b}) \le \lambda+ \frac{\beta \epsilon}{2(\theta +z)}\|(\eta^N_s)^\alpha\|_{\ell_\Phi}1(\eta^N_s\in X_{N,b})
	\end{equation} and set $\delta:=\frac{\epsilon}{2\lambda}$. It then follows that, for any time interval $[s,t]\subset [0,T]$ of length at most $\delta$, \begin{equation}
		\int_s^t \|(\eta^N_u)^\alpha\|_{\ell^1}1(\eta^N_u\in X_{N,b})du \le \frac{\epsilon}{2}+ \frac{\beta \epsilon }{2(\theta+z)} \int_0^T\|(\eta^N_u)^\alpha\|_{\ell_\Phi}1(\eta^N_u\in X_{N,b})du. 
	\end{equation} Hence, for any such interval to exist for which $\int_s^t \|(\eta^N_u)^\alpha\|_{\ell^1} du>\epsilon$, either the event in \eqref{eq: choose b} occurs, or $$ \int_0^T \|(\eta^N_u)^\alpha\|_{\ell_\Phi}1(\eta^N_u\in X_{N,b})du > \theta+z. $$ The final event has probability at most $e^{-N^d z/\chi_N}$ by a Chebychev estimate, and the proof is complete. 
\end{proof}

 \section{Superexponential Estimate} \label{sec: supex} We conclude with the proof of Theorem \ref{thrm: supex}. As promised in the introduction, the following proof avoids any problems due to the degeneration of the smallest jump rate $\chi_N^{\alpha-1}\to 0$ by using the pathwise regularity (in an averaged sense) introduced by Lemma \ref{lemma: reg by paths}. The rescaling of particle size by $\chi_N\to 0$ also provides a slight simplification, in that we go directly from a single point to a small macroscopic box, without the usual step of a large microscopic box. This is, however, not a crucial feature, and the following method would also apply in other cases with degenerate jump rates where the intermediate scale must be kept. \begin{proof}[Proof of Theorem \ref{thrm: supex}] Let us fix $z<\infty$, and choose $b<\infty$ such that \begin{equation} \label{eq: choose b 2}
	\limsup_{N\to \infty} \frac{\chi_N}{N^d}\log \PP_{\Pi^N_a}\left(\eta^N_0\not \in X_{N,b}\right)\le -z. 
\end{equation} Thanks to \eqref{eq: ET 1}, we may choose $\gamma>0$ such that \begin{equation} \limsup_{N\to \infty} \frac{\chi_N}{N^d} \log \PP_{\Pi^N_a}\left(\gamma T \sup_{t\le T} \cH(\eta^N_t)> \frac{\delta}{2} \right) \le -z. \end{equation} 
%It is now sufficient to prove \eqref{eq: supex conclusion} when the random variable $N^{-d}\sum_{x\in \TND} V^N_\alpha(\eta^N_t,\epsilon,x)$ is replaced by $$ \frac{1}{N^d}\sum_{x\in \TND} V^N_\alpha(\eta^N_t,\epsilon,x)1(\langle \eta^N_t, 1\rangle \le b)%-\lambda \cH(\eta^N_t).
%$$
Let us denote the average of \eqref{eq: VNA} over $x\in \TND$ by $V^N_{\alpha}(\eta^N,\epsilon)$. Using the Feynman-Kac formula \eqref{eq: FK formula}, it is sufficient to prove that, for all $\vartheta>0$, \begin{equation}
	\label{eq: supex after FK} \hspace{-0.5cm} \limsup_{\epsilon \to 0} \limsup_{N\to \infty} \sup_{f_N}\left\{\EE_{f_N\Pi^N_a}\left[V^N_\alpha(\eta^N, \epsilon)1(\eta^N_t\in X_{N,b}) - \gamma \cH(\eta^N)\right] - \vartheta \chi_N N^{2-d} \mathfrak{d}_N(f_N)\right\} \le 0.
\end{equation} Thanks to Lemma \ref{lemma: gallilean pdfs}, we may harmlessly restrict the supremum to $f_N\in G_N(\Pi^N_a,b)$. We first truncate, using Proposition \ref{prop: int dirichlet form}, and then use a single approximation corresponding to both one- and two-block estimates \cite{kipnis1998scaling}, based on Lemma \ref{lemma: reg by paths}. \\ \paragraph{\emph{Step 1. Truncation using Integrability Estimate}} In this step we will prove that, for any $\vartheta>0$ and $\lambda>0$, there exists $M<\infty$ such that, uniformly in $\epsilon>0$, \begin{equation}\label{eq: truncation}
	\limsup_{N\to \infty} \sup_{f_N \in G_N(\Pi^N_a, b)} \left\{\EE_{f_N\Pi^N_a}[|V^N_\alpha(\eta^N, \epsilon)-V^{N}_{\alpha,M}(\eta^N,\epsilon)|]-\frac{\vartheta}{2} \chi_N N^{2-d} \mathfrak{d}_N(f_N)\right\} \le \lambda
\end{equation} where $V^N_{\alpha,M}$ is defined by replacing every instance of the nonlinearity $\varphi(u)= u^\alpha$ in \eqref{eq: VNA} by the globally Lipschitz nonlinearity \begin{equation}\label{eq: varphi M} \varphi_M(u):=\begin{cases} u^\alpha, & u\le M; \\ M^{\alpha}+\alpha M^{\alpha-1}(u-M), &  u>M. \end{cases}\end{equation} We start by observing that, for any $\eta^N \in X_{N,b}$, $\epsilon$, \begin{equation} \label{eq: truncation start}
	|V^N_\alpha(\eta^N,\epsilon)-V^N_{\alpha,M}(\eta^N,\epsilon)| \le \|(\eta^N)^\alpha 1(\eta^N>M)\|_{\ell^1} +  \|(\overline{\eta}^{N,N\epsilon})^\alpha 1(\overline{\eta}^{N,N\epsilon}>M)\|_{\ell^1}.
\end{equation} We give an argument for the first term, which also treats the second term with only cosmetic modification. For $\Phi$ the Young function of Theorem \ref{thrm: main integrability estimate} and $\Psi$ the dual, we estimate, for $\eta^N\in X_{N,b}$, \begin{equation}\begin{split} 
	\label{eq: cutoff 1} \|(\eta^N)^\alpha1(\eta^N>M)\|_{\ell^1}&\le 2\|(\eta^N)^\alpha\|_{\ell_\Phi} \|1(\eta^N>M)\|_{\ell_\Psi}  \\& \le  \frac{2}{\Psi^{-1}(M/b)}\|(\eta^N)^\alpha\|_{\ell_\Phi} \end{split}
\end{equation} where we use the H\"older inequality for Orlicz norms \cite[Proposition 1.14]{leonard2007orlicz} in the first line, and the second line follows using the definition of $\|\cdot\|_{\ell_\Psi}$ and a Chebychev inequality. Thanks to Proposition \ref{prop: int dirichlet form} there exists $\vartheta_1, \lambda_1>0$, depending only on $a,b$, such that, for any $f_N\in G_N(\Pi^N_a, b)$, \begin{equation}
	\mathbb{E}_{f_N \Pi^N_a}\left[\|(\eta^N)^\alpha\|_{\ell_\Phi}\right] \le \vartheta_1 \chi_N N^{2-d} \mathfrak{d}_N(f_N) + \lambda_1.
\end{equation} We now choose $M$ large enough, depending on $a, b, \vartheta, \lambda$, but not on $N$, \begin{equation}
	\label{eq: choose M} \frac{2\vartheta_1}{\Psi^{-1}(M/b)} \le \frac{\vartheta}{4}; \qquad \frac{2\lambda_1}{\Psi^{-1}(M/b)} \le \frac{\lambda}{2}
\end{equation} so that, for any $f_N\in G_N(\Pi^N_a,b)$, \begin{equation}
	\EE_{f_N\Pi^N_a}\left[\|(\eta^N)^\alpha1(\eta^N>M)\|_{\ell^1}\right]-\frac{\vartheta}{2}\chi_N N^{2-d}\mathfrak{d}_N(f_N) \le \frac\lambda2.
\end{equation} The same argument applies to $\overline{\eta}^{N,\epsilon}$ in place of $\eta^N$, and returning to \eqref{eq: truncation start}, we have proven \eqref{eq: truncation}. \\ \paragraph{\emph{Step 2. One- and Two-Block Estimates}} We now simultaneously prove one- and two-block estimates, to show that $\eta^N(0)$ is close to $\bar{\eta}^{N, N\epsilon}(0)$. In this step, we prove that, for any $\vartheta>0$ and $\gamma>0$\begin{equation}\hspace{-1cm}
	\label{eq: one block conclusion} \limsup_{\epsilon\to 0, N\to \infty} \sup_{f_N\in G_N(\Pi^N_a,b)}\left\{\mathbb{E}_{f_N\Pi^N_a}[|\eta^N(0)-\overline{\eta}^{N,N\epsilon}(0)|-\gamma\cH(\eta^N)]-\frac{\vartheta}{2}\chi_N N^{2-d}\mathfrak{d}_N(f_N)\right\} \le 0.
\end{equation}  We start by bounding the supremum above by using translational invariance to write writing, for any $f_N$ and any $\lambda, \gamma>0$ \begin{equation} \begin{split} \label{eq: one block 0}
	&\mathbb{E}_{f_N\Pi^N_a}\left[|\eta^N(0)-\overline{\eta}^{N,N\epsilon}(0)| - \gamma \cH(\eta^N)\right] \\ & \le \mathbb{E}_{f_N \Pi^N_a}\left[\frac{1}{(2\lfloor \epsilon N\rfloor + 1)^d}\sum_{y\in B_{\lfloor \epsilon N \rfloor}(0)}|\eta^N(y)-\eta^N(0)|-\frac{\gamma}{2}\left\{w(\eta^N(0))+w(\eta^N(y))\right\}\right]
	\end{split} \end{equation}
	where we recall that $w(u)=u\log u$ is the entropy nonlinearity. Next, we use the elementary inequality that, for all $\alpha\ge 1, \gamma>0, \lambda>0$, there exists $C_{\alpha, \gamma,\lambda}<\infty$ such that, for all $x, y\ge 0$, \begin{equation}\label{eq: elementary inequality}
		|x-y|-\frac{\gamma}{2}(w(x)+w(y))\le \lambda + C_{\alpha, \gamma,\lambda}(x^{\alpha/2}-y^{\alpha/2})^2.
	\end{equation} We can thus bound \begin{equation} \begin{split}
	&\mathbb{E}_{f_N\Pi^N_a}\left[|\eta^N(0)-\overline{\eta}^{N,N\epsilon}(0)| - \gamma \cH(\eta^N)\right] \\ & \le \mathbb{E}_{f_N \Pi^N_a}\left[\lambda + \frac{C}{(2\lfloor \epsilon N\rfloor + 1)^d}\sum_{y\in B_{\lfloor \epsilon N \rfloor}(0)}|(\eta^N(y))^{\alpha/2}-(\eta^N(0))^{\alpha/2}|^2\right]
	\end{split} \end{equation}We now compute the expectation by the first part \eqref{eq: reg by path conc} of Lemma \ref{lemma: reg by paths} to find, up to a new value of $C$ with the same dependency, and $\delta=\delta(\alpha)>0$, \begin{equation}\label{eq: one block 1} \begin{split}\mathbb{E}_{f_N\Pi^N_a}\left[|\eta^N(0)-\overline{\eta}^{N,N\epsilon}(0)|-\gamma\cH(\eta^N)\right] &\le \lambda + C \chi_N \epsilon^2 N^{2-d} \mathfrak{d}_N(f_N) \\ & +C\chi_N^\delta \mathbb{E}_{f_N \Pi^N_a}[1+\|(\eta^N)^\alpha\|_{\ell^1}].\end{split} \end{equation} The expectation in the final line is bounded by $C(1+\chi_NN^{2-d}\mathfrak{d}_N(f_N))$ thanks to Proposition \ref{prop: int dirichlet form} and Lemma \ref{lemma: orlicz interpolation}, and all together, for all $f_N\in G_N(\Pi^N_a,b)$ and a new value of $C=C(\alpha, \gamma,\lambda)$, \begin{equation}\label{eq: one block 2} \begin{split}& \mathbb{E}_{f_N\Pi^N_a}\left[|\eta^N(0)-\overline{\eta}^{N,N\epsilon}(0)|-\gamma\cH(\eta^N)\right] - \frac{\vartheta}{2}\chi_N N^{2-d}\mathfrak{d}_N(f_N) \\  &\hspace{2cm} \le (\lambda+ C\chi_N^\delta) + \chi_N\left(C\chi_N^\delta+C\epsilon^2-\frac{\vartheta}{2}\right)N^{-d} \mathfrak{d}_N(f_N).\end{split} \end{equation} %As soon as $\epsilon<\sqrt{\vartheta/C}$, the term in parentheses is negative for all sufficiently large $N$. We may therefore optimise over $f_N$ and take $N\to \infty$ to find that \begin{equation}\begin{split} \label{eq: one block 3} \limsup_{N\to \infty} \sup_{f_N\in G_N(\Pi^N_a,b)} &\bigg\{\mathbb{E}_{f_N\Pi^N_a}\big[|\eta^N(0)-\overline{\eta}^{N,N\epsilon}(0)| -\gamma\cH(\eta^N)\big] \\ & \hspace{3cm}-\frac{\vartheta}{2}\chi_N N^{2-d}\mathfrak{d}_N(f_N)\bigg\} \le \lambda\end{split}\end{equation} 
	This bound is uniform in $f\in G_N(\Pi^N_a, b)$, and is immediate to see that the limit superior of the right-hand side in the double-limit $N\to \infty, \epsilon\to 0$ is at most $\lambda$. Since $\lambda>0$ was arbitrary, have proven \eqref{eq: one block conclusion}.\\ \paragraph{\emph{Step 3. Reassembly}} We finally reassemble the previous two steps to prove \eqref{eq: supex after FK}. Fix $\vartheta>0$ and $\lambda>0$. Thanks to the conclusion \eqref{eq: truncation} of Step 1, we can choose $M<\infty$ such that, uniformly in $\epsilon>0, N, f_N$, \begin{equation} \label{eq: reassembly 0}
		\EE_{f_N\Pi^N_a}[|V^N_\alpha(\eta^N, \epsilon)-V^{N}_{\alpha,M}(\eta^N,\epsilon)|]-\frac{\vartheta}{2} \chi_N N^{2-d} \mathfrak{d}_N(f_N) \le \frac{\lambda}{2}.
	\end{equation} For $M$ thus chosen, let $K_M$ be the Lipschitz constant of $\varphi_M$. Thanks to \eqref{eq: one block conclusion}, we may choose $\epsilon>0$ such that, for all sufficiently large $N$ and all $f_N\in G(\Pi^N_a, b)$, \begin{equation}
		\label{eq: reassembly} \mathbb{E}_{f_N\Pi^N_a}[|\eta^N(0)-\overline{\eta}^{N,N\epsilon}(0)|-\frac{\gamma}{K_M}\cH(\eta^N)]-\frac{\vartheta}{2K_M}\chi_N N^{2-d}\mathfrak{d}_N(f_N) \le \frac{\lambda}{2K_M}.
	\end{equation} Since $V^N_{\alpha,M}(\eta^N,\epsilon)(0)\le K_M|\eta^N(0)-\overline{\eta}^{N, N\epsilon}(0)|$, we use translational invariance to obtain \begin{equation}
		\label{eq: reassembly 2} \mathbb{E}_{f_N \Pi^N_a}\left[V^N_{\alpha,M}(\eta^N, \epsilon)-\gamma \cH(\eta^N)\right]-\frac \vartheta{2}\chi_N N^{2-d}\d_N(f_N)\le \frac{\lambda}{2}.
	\end{equation} Combining with \eqref{eq: reassembly 0}, we find that, for and any $\gamma, \vartheta, \lambda>0$, for $\epsilon>0$ chosen as above, all $N$ sufficiently large and all $f_N\in G_N(\Pi^N_a, b)$, \begin{equation} \label{eq: supex done} \mathbb{E}_{f_N\Pi^N_a}\left[V^N_\alpha(\eta^N,\epsilon)-\gamma\cH(\eta^N)\right]-\vartheta \chi_N N^{2-d}\d_N(f_N)\le \lambda. \end{equation} Since $\lambda>0$ was arbitrary, the proof of \eqref{eq: supex after FK}, and hence of Theorem \ref{thrm: supex}, is complete.
	 \end{proof}
 
 \section*{Acknowledgments} 

The first author acknowledges support by the Max Planck Society through the Research Group ``Stochastic Analysis in the Sciences (SAiS)'' and the DFG CRC/TRR 388 “Rough Analysis, Stochastic Dynamics and Related Fields”, Project A11. The second author is supported by the Royal Commission for the Exhibition of 1851 and acknowledges support by the European Union (ERC, FluCo, grant agreement No. 101088488), as well as the hospitality of the group ``Pattern Formation, Energy Landscapes and Scaling Laws'' of the Max Planck Institute for Mathematics in the Sciences. Views and opinions expressed are however those of the author(s) only and do not necessarily reflect those of the European Union or of the European Research Council. Neither the European Union nor the granting authority can be held responsible for them.  
 
\begin{appendix}

			\begin{appendix} \section{Some Facts on Orlicz Norms} \label{sec: Orlicz Appendix} 
		
		\subsection{Proof of Lemma \ref{lemma: orlicz consistency}} \label{subsec: orlicz consistency} Fix $\Phi, p, \cP_N, \widetilde{\cP}_N, w^N$ as in the lemma, let $h: \cP_N\to \RR$ and let $\widetilde{h}$ be as defined in the statement. Using the definitions of the Orlicz norms, $\widetilde{h}$ and the Lebesgue norms, we write \begin{equation}
			\begin{split} \label{eq: orlicz cons definition chasing}
				\|\widetilde{h}(\widetilde{B})\|_{\ell_\Phi(\widetilde{B}\in \widetilde{\cP}_N,w^N)}&=\inf\left\{t\ge 0: \sum_{\widetilde{B}\in \widetilde{\cP}_N} \frac{w^N(\widetilde{B})}{w^N(\TND)}\Phi\left(\frac{\|h\|_{\ell^p, \cP_N|_{\widetilde{B}}}}{t}\right) \le 1\right\}\\ &=\inf\left\{t\ge 0: \sum_{\widetilde{B}\in \widetilde{\cP}_N} \frac{w^N(\widetilde{B})}{w^N(\TND)}\Phi\left(\left\|{h}/{t}\right\|_{\ell^p, \cP_N|_{\widetilde{B}}}\right) \le 1\right\} \\ &=\inf\left\{t\ge 0: \sum_{\widetilde{B}\in \widetilde{\cP}_N} \frac{w^N(\widetilde{B})}{w^N(\TND)}\Phi\left(\left[\sum_{B\in \cP_N|_{\widetilde{B}}} \frac{w^N(B)}{w^N(\widetilde{B})} \left(\frac{h(B)}{t}\right)^p \right]^{1/p}\right) \le 1\right\}.
			\end{split}
		\end{equation} Recalling that $\Theta(u):=(\Phi^{-1}(u))^p$ is assumed to be convex, we can rewrite the argument of $\Phi$ in the final expression as \begin{equation}
			\left[\sum_{B\in \cP_N|_{\widetilde{B}}} \frac{w^N(B)}{w^N(\widetilde{B})} \left(\frac{h(B)}{t}\right)^p \right]^{1/p}=\Phi^{-1}\left(\Theta^{-1}\left[\sum_{B\in \cP_N|_{\widetilde{B}}} \frac{w^N(B)}{w^N(\widetilde{B})} \Theta\left(\Phi\left[\frac{h(B)}{t}\right]\right)\right]\right).
		\end{equation} Since $\Theta$ is assumed to be convex, we may bound this expression below by Jensen's inequality to find \begin{equation}
			\left[\sum_{B\in \cP_N|_{\widetilde{B}}} \frac{w^N(B)}{w^N(\widetilde{B})} \left(\frac{h(B)}{t}\right)^p \right]^{1/p}\ge \Phi^{-1}\left(\sum_{B\in \cP_N|_{\widetilde{B}}} \frac{w^N(B)}{w^N(\widetilde{B})} \Phi\left[\frac{h(B)}{t}\right]\right).
		\end{equation} Returning to \eqref{eq: orlicz cons definition chasing}, it follows that \begin{equation} \begin{split}
			\|\widetilde{h}(\widetilde{B})\|_{\ell_\Phi(\widetilde{B}\in\widetilde{\cP}_N,w^N)} \ge \inf\left\{t\ge 0: \sum_{\widetilde{B}\in \widetilde{\cP}_N} \frac{w^N(\widetilde{B})}{w^N(\TND)} \sum_{B\in \cP_N|_{\widetilde{B}}} \frac{w^N(B)}{w^N(\widetilde{B})}\Phi\left(\frac{h(B)}t\right)\le 1\right\}. \end{split}
		\end{equation} The double summation collapses to a single summation over $B\in \cP_N$, and this is the definition of $\|h(B)\|_{\ell_\Phi(B\in \cP_N,w^N)}$. \subsection{Proof of Lemma {lemma: orlicz interpolation}}\label{subsec: orlicz interpolation} We now prove the interpolation in Lemma \ref{lemma: orlicz interpolation}. 
			Let $\Psi:[0,\infty)\to [0,\infty)$ be the convex dual of $\Phi$. Using H\"older's inequality for Orlicz norms \cite[Proposition 1.14]{leonard2007orlicz}, for any functions $f, g$ it holds that \begin{equation}
				\frac1{N^d}\sum_{x\in \TND} |f(x)g(x)|\le 2\|f\|_{\ell_\Phi}\|g\|_{\ell_\Psi}.
			\end{equation} We apply this inequality with $f=u^\alpha, g=1(u>k)$, for some $k\in (0,\infty)$ to be chosen later. We thus find that, for any $u$ with $\|u\|_{\ell^1}\le b$, \begin{equation}
				\|u^\alpha1(u>k)\|_{\ell^1}\le \frac{2\|u^\alpha\|_{\ell_\Phi}}{\Psi^{-1}(1/\|1(u>k)\|_{\ell^1})}\le \frac{2\|u^\alpha\|_{\ell_\Phi}}{\Psi^{-1}(k/b)}
			\end{equation} where the second line uses Chebychev's inequality in the denominator. Including the contributions at small values, \begin{equation}
				\|u^\alpha\|_{\ell^1}\le k^\alpha+ \frac{2\|u^\alpha\|_{\ell_\Phi}}{\Psi^{-1}(k/b)}.
			\end{equation} Setting $k=b\Psi(2/\delta)$ makes the factor in the second term $\delta$, and we are done. \subsection{Technical Constructions}\label{subsec: orlicz constructions} We finally give the constructions of a Young function and weight deferred from Proposition \ref{prop: Young function} and Lemma \ref{lemma: p and w}. \begin{proof}[Proof of Proposition \ref{prop: Young function}] Let $\chi_N\to 0$ be any sequence, and fix $p>1, \delta>0$. The task is to construct a strict Young function $\Phi$ such that the dual function $\Psi$ satisfies \begin{equation}
				\label{eq: choose Phi reiterate} \sup_{u\ge 1}u^{-1/d}\Psi^{-1}(u)<\infty; \qquad \sup_N \chi_N^{\delta/2}\Psi^{-1}(N^d)<\infty			\end{equation} and such that \begin{equation}
				\label{eq: theta condition reiterate} \Theta(u):=(\Phi^{-1}(u))^p
			\end{equation} is also convex. Observe first that if \eqref{eq: choose Phi reiterate} holds for a Young function $\Phi$ and $\widehat{\Phi}$ is another Young function with the pointwise inequality $\widehat{\Phi}\le \Phi+C$, then \eqref{eq: choose Phi reiterate} automatically also holds for $\widehat{\Phi}$, since each operation of the convex dualisation and functional inverse reverse the ordering of pointwise inequality to yield $\widehat{\Psi}^{-1}(u)\le \Psi^{-1}(u+C)$ and concavity of $\Psi^{-1}$ yields $\Psi^{-1}(u+C)\le \Psi^{-1}(u)+C_2$ for some finite $C_2$, uniformly in $u\ge 1$, which implies that $\widehat{\Psi}$ inherits the property \eqref{eq: choose Phi reiterate} from the same property of $\Psi$.\\ We thus show in a first step how a strict Young function satisfying \eqref{eq: choose Phi reiterate} may be constructed, and in a second step how, given any strict Young function $\Phi$, another strict Young function $\widehat{\Phi}\le\Phi+C$ may be chosen to impose \eqref{eq: theta condition reiterate}. \medskip \\ \paragraph{\em Step 1. Constructing $\Phi$ to impose \eqref{eq: choose Phi reiterate}} Since the condition \eqref{eq: choose Phi reiterate} only involves $\Psi^{-1}$, it is convenient to construct a strict, strictly increasing Young function $\Psi$ whose inverse function $\Psi^{-1}$ satisfies both inequalities, and define $\Phi$ implicitly by taking it to be the convex dual of $\Psi$. \\ \\ Set $\Psi_1(x):=|x|^d$, and construct $\Psi_2$ as follows. Let $\{x_n, n\ge 0\}$ be a strictly increasing enumeration of the set $\{0\}\cup\{\chi_N^{-\delta/2}: N\ge 1\}$, which is possible because $\chi_N\to 0$ and the given set is locally finite. Set $$ y_0=0; \qquad y_n:= \max\{N^d: \chi_N^{-\delta/2}=x_n\}, n\ge 1. $$ We now define $\Psi_2$ on $[0,\infty)$ by setting $\Psi_2(0)=0$ specifying a weak derivative \begin{equation}
				\label{eq: I'm basically a wizard anyway} \Psi_2'(x):= \max\left\{\frac{y_{n+1}-y_n}{x_{n+1}-x_n}: n\ge 0, x_n\le x\right\}.
			\end{equation} The postulated expression is nondecreasing and locally finite, so that this uniquely specifies a convex function $\Psi_2$, and a straightforward induction shows that $\Psi_2(x_n)\ge y_n$ for all $n$. Extending $\Psi_2$ to an even function on the whole line $\RR$, we now take $\Psi(x):=\Psi_1+\Psi_2+x^2$, which is convex, continuous, everywhere finite and strictly increasing on $[0,\infty)$, because the same is true for all of $\Psi_1, \Psi_2, x^2$, and $\Psi(x)/x\ge x \to \infty$ so $\Psi$ is strict. The first condition in \eqref{eq: choose Phi reiterate} holds because $\Psi(x)\ge \Psi_1(x)=x^d$, and the second because, for each $N$, there exists $n$ such that $\chi_N^{-\delta/2}=x_n$, and by construction of the $y$s, $N^d \le y_n$. As a result,$$ \Psi(\chi_N^{-\delta/2})\ge \Psi_2(\chi_N^{-\delta/2})=\Psi_2(x_n)\ge y_n\ge N^d$$ which implies that $\Psi^{-1}(N^d)\le \chi_N^{-\delta/2}$.\\ \paragraph{\em Step 2. Imposition of the Convexity Condition \eqref{eq: theta condition reiterate}} Let us now assume that we are given a strict Young function $\Phi$ and an index $p>1$; the task is to construct a new strict Young function $\widehat{\Phi}$ satisfying, for some constant $C$ and all $u\in \mathbb{R}$, $\widehat{\Phi}(u)\le \Phi(u)+C$, and such that \eqref{eq: theta condition reiterate} holds for $\widehat{\Phi}$. \\ \\ Since $\Phi$ is convex and globally defined, it admits a locally bounded, nondecreasing weak derivative $\Phi'$; let $\nu$ be the locally finite Borel measure on $[0,\infty)$ given by $\nu([0,x])=(\Phi'(x)-a)_+$, with $a:=\Phi'(1)$. Setting $g$ to be a smooth mollifier with $g\ge 0, \int_{-1}^{0} g dy =1$ and $\text{supp}(g)=[-1,0]$, set $$f(x):=(\nu \star g)(x)=\int_{[0,\infty)} g(x-y)\nu(dy)$$ and let $F(x):=a+\int_0^x f(x')dx'.$ We now consider the ODE $\psi'(x)=G(x,\psi(x))$ in $x\ge 1$, where $G$ is the function $$  G(x,y)=\tanh\left(\frac{p-1}{2x}\left(\frac{F^2}{f+1}\right)(y)\right) $$ with initial condition $\psi(1)=1.$ A simple calculus exercise shows that $G$ is locally Lipschitz in both arguments $x,y$, which produces unique maximal solutions, and the bound $G(x,y)\le 1$ implies that the maximal solution is in fact global. This ODE therefore uniquely specifies a smooth, strictly increasing map $\psi:[0,\infty)\to [0,\infty)$, and by a straightforward argument by contradiction, $\psi(x)\to \infty$ as $x\to \infty$. We now define $\widehat{F}$ by \begin{equation}
				\widehat{F}(x):=\begin{cases}
					a, & x\in [0,1]; \\ F(\psi(x)), & x\ge 1
				\end{cases}
			\end{equation} so that $\widehat{F}$ is nondecreasing, nonnegative and locally bounded. As a result, there is a unique Young function $\widehat{\Phi}$ with $\widehat{\Phi}(0)=0$ and $\widehat{\Phi}'(u)=\widehat{F}(u)$ for $u> 0$. Since $\Phi$ is strict, $F\to \infty$ as $x\to \infty$ and so $\widehat{F}\to \infty$ as $x\to \infty$, implying that $\widehat{\Phi}$ is also strict. Since $G\le 1$, $\psi(x)\le x$ for all $x\ge 1$, so $\widehat{F}(x)\le F(x)$ from which it is an easy deduction that $\widehat{F}(x)\le \Phi'(x)$ for all $x\ge 1$. As a result, $\widehat{\Phi}(x)\le \Phi(x)+(a-\Phi(1))_+$, which is an inequality of the desired form.\\ \\ It remains to be shown how the construction imposes the condition \eqref{eq: theta condition reiterate}. An exercise in calculus shows that \eqref{eq: theta condition reiterate} is equivalent to imposing, for all $x\ge 0$, \begin{equation} \label{eq: theta condition reiterate reiterate}
				(p-1)(\widehat{\Phi}')^2(x)\ge x\widehat{\Phi}''(x)
			\end{equation} where we observe that $\widehat{\Phi}$ has a second weak derivative, given by $\widehat{\Phi}''(x)=\psi'(x)f(\psi(x))1(x>1)$. The reformulated condition \eqref{eq: theta condition reiterate reiterate} holds automatically on the interval $[0,1)$ on which $\widehat{\Phi}$ is affine. For $x\ge 1$, we use the identification of $\widehat{\Phi}'(x)=\widehat{F}(x)$ and the second derivative above, the ODE and the function $G$ defining $\psi$, and the observation that $\tanh(x)\le x$ to verify \eqref{eq: theta condition reiterate reiterate}, which completes the proof of the claimed properties.     \end{proof} Finally we give the proof of a suitable lattice partitions and weighting. \begin{proof}[Proof of Lemma \ref{lemma: p and w}] Let us fix $N, \chi_N$. For ease of presentation, we divide the proof into steps. It will be efficient to give the decompositions of the torus first in dimension 1 in Steps 1-2, and show a single expression for how this may be tensorised into any dimension in Step 3.  \paragraph{\em Step 1. Nearly Dyadic Decomposition of the 1-dimensional Lattice} Choose $m:=\lfloor \log_2 N\rfloor$, so that $2^m\le N<2^{m+1}$, and set $M=2^m, h=N-M\ge 0$. We set, for all $1\le i\le d$, \begin{equation} \label{eq: basic partition}
				\cP^{1,i}_N=\left\{\left\{\frac{2x}{N}, \frac{2x+1}{N}\right\}: 0\le x<h\right\}\cup\left\{\frac{x}{N}, 2h\le x<N\right\}
			\end{equation} so that $\cP^{1,i}_N$ decomposes the rescaled torus $\mathbb{T}^1_N=\{0, N^{-1}, 2N^{-1},\dots 1-N^{-1}\}$ into $M$ sets, each of size at most 2. \paragraph{\em Step 2. Multiscale Decomposition in Dimension 1.} Given $\delta>0$ fixed, set $q:=\lfloor -\delta \log_2 \min(\chi_N, 1/2)/8 \rfloor$, and choose $K_N, r\in \mathbb{N}, r\in [0, k)$ so that $m=m(N)$ defined in Step 1 can be written $m=qK_N+r$. We now define $$ {l}^k_N:=2^{q(k-1)}, \quad 1\le k < K_N; \qquad \widehat{l}^k_N:= 2^m. $$ For the first property in \eqref{eq: consistency of scales}, we have $$\frac{{l}^{k+1}_N}{{l}^k_N} \le 2^{2q}\le \chi_N^{-\delta/4}$$ for $k<K_N$,  while for the second, $$ \frac{{l}^{k+1}_N}{{l}^k_N} \ge 2^{q}\ge  \frac12 \min\left(\chi_N, \frac12\right)^{-\delta/8}. $$  We now let $\cP^{k, i}_N$, $1<k\le K_N, 1\le i\le d$ be the coarsening of $\cP^{1,i}_N$ defined in \eqref{eq: basic partition} by grouping $l^k_N$ adjacent elements together: enumerating the blocks of $\cP^{1,i}_N$ in increasing order by $A^{1,i}, 1\le i\le M$, define \begin{equation}
				\label{eq: coarse partition} \cP^{k,i}_N:=\left\{A^{1,tl^k_N+1}\cup \dots \cup A^{1,(t+1)l^k_N}, \quad 0\le t\le \frac{M}{l^k_N}\right\}.
			\end{equation} Since each block of $\cP^{1,i}_N$ is of size either 1 or 2, each block in $\cP^{k,i}_N$ has side length between $l^k_N$ and $2l^k_N$, and $\cP^{K_N, i}_N=\mathbb{T}_N$. Moreover, $\cP^{k,i}_N\prec \cP^{k+1,i}_N$, since any block of $\cP^{k+1,i}_N$ is formed by grouping $2^q$ consecutive blocks of $\cP^{k,i}_N$. \paragraph{\em Step 3. Partitions in any dimension} The procedure for converting such a decomposition in $d=1$ to the higher-dimensional case $d\ge 2$ is the same at every scale, and so we write down a single expression. For $1\le k\le K_N$, define \begin{equation}
				\cP^k_N:=\left\{\prod_{i=1}^d A^{k,i}: A^{k,i}\in \cP^{k, i}_N\right\}.
			\end{equation} Every side length of every block is between $l^k_N$ and $2l^k_N$ as observed in Step 2, so we have $\cP^k_N\in \Q(N, l^k_N)$ as desired. The properties that $\cP^k_N\prec \cP^{k+1}_N$ and $\cP^{K_N}_N=\{\TND\}$ are inherited directly from the same properties in dimension 1.      \paragraph{\em Step N. Definition of the weight.} For $x\in \TND$, let $A^1(x)$ be the element of $\cP^1_N$ containing $x$, and set $$ w^N(x):=\frac1{\#A^1(x)}.$$ Since every block of $\cP^1_N$ has side lengths in $\{1, 2\}$, $1\le \#A^1(x) \le 2^d$, so $\sup_x w^N(x)/\inf_x w^N(x)\le 2^d$ is bounded independently of $N$. Meanwhile, every element $A\in \cP^1_N$ receives weight $w^N(A)=1$, and any element $B$ of $\cP^k_N$ with $1\le k<K_N$ is formed by joining exactly $2^{qd(k-1)}$ blocks of $\cP^1_N$, so has weight $w^N(B)=2^{qd(k-1)}$. The statement that all blocks of $\cP^{K_N}_N$ receive the same weight is empty, because $\cP^{K_N}_N$ is a singleton.  		\end{proof}   \end{appendix}

 \end{appendix}

\bibliographystyle{plain}
\bibliography{literature}

\end{document}